\title{How Balanced Can Permutations Be?}
\author{Gal Beniamini\qquad Nir Lavee\qquad Nati Linial
\thanks{Supported in part by a NSF-BSF research grant ``Global Geometry of Graphs''.}
\vspace {0.1cm} \\\small The Hebrew University of Jerusalem}
\date{}
\newtheorem{theorem}{Theorem}[section]
\newtheorem*{theorem*}{Theorem}
\newtheorem{corollary}{Corollary}[theorem]
\newtheorem{lemma}[theorem]{Lemma}
\newtheorem{proposition}[theorem]{Proposition}
\newtheorem{definition}[theorem]{Definition}
\newtheorem{notation}[theorem]{Notation}
\newcommand{\twob}{$2$-balanced\xspace}
\newcommand{\threeb}{$3$-balanced\xspace}
\newcommand{\Sn}{\mathbb{S}_n}
\newcommand{\RR}{\mathbb{R}}
\newcommand{\E}{\mathbb{E}}
\newcommand{\pc}[2]{{\# \mathtt{ #1 } \left( #2 \right)}}
\newcommand{\eqdef}{\vcentcolon=}
\DeclareMathOperator{\ES}{ES}
\newcommand{\ESZ}{Erd\H{o}s-Szekeres\xspace}
\theoremstyle{remark}
\newtheorem{rem}[theorem]{\protect\remarkname}
\providecommand{\remarkname}{Remark}
\newcommand{\perm}[1]{(\mathtt{ #1 })}
\def\acts{\curvearrowright}
\tikzset{
  hatch size/.store in=\hatchsize,
  hatch angle/.store in=\hatchangle,
  hatch line width/.store in=\hatchlinewidth,
  hatch size=5pt,
  hatch angle=0pt,
  hatch line width=.5pt,
}
\theoremstyle{plain}
\newtheorem{thm}{Theorem}
\begin{document}

\maketitle

\vspace{-0.45cm}
\begin{abstract}
A permutation $\pi \in \Sn$ is {\em $k$-balanced} if every permutation of order $k$ occurs in $\pi$ equally often, through order-isomorphism.
In this paper, we explicitly construct $k$-balanced permutations for $k \le 3$, and every $n$
that satisfies the necessary divisibility conditions.
In contrast, we prove that for $k \ge 4$, no such permutations exist.  
In fact, we show that in the case $k \ge 4$, every $n$-element permutation is at least $\Omega_n(n^{k-1})$
far from being $k$-balanced.
This lower bound is matched for $k=4$, by a construction based on the \ESZ permutation.

\end{abstract}

\section{Introduction}
\label{section:introduction}
A permutation $\tau \in \mathbb{S}_k$ occurs as a \textit{pattern} in
$\pi \in \Sn$, if there are indices $1\le s_1 < \dots < s_k\le n$
such that $$\forall i,j \in \{1, \dots, k\}:\ \pi(s_i) < \pi(s_j) \iff \tau(i) < \tau(j).$$
This simple concept of \textit{order-isomorphism} gives rise to many intriguing problems.

\paragraph{The local structure of permutations.}
The $k$-profile of an $n$-element permutation $\pi$
is the vector that counts the occurrences
of every order-$k$ pattern in $\pi$. There are still many things we do not know about
the set of $k!$-dimensional vectors that arise in this way.
The \ESZ Theorem \cite{erdos1935combinatorial} 
states that every order-$n$
permutation must contain a monotone pattern of order
$\lceil\sqrt{n}~\rceil$.
Conversely, the \textit{packing density} of a pattern $\tau \in \mathbb{S}_k$
is its maximal proportion within the $k$-profile of an $n$-element
permutation, as $n \to \infty$. 
Packing densities have received considerable attention \cite{albert2002packing,sliacan2018improving,wilf2002patterns},
yet even for $\mathbb{S}_4$ some answers remain presently unknown. 
Patterns in random permutations have also received their share of attention \cite{even2020patterns, janson2013asymptotic}, as has the algorithmic problem of computing the $k$-profile \cite{even2021counting,dudek2020counting}. Both are relevant to certain basic questions in mathematical statistics. Every one of the above questions also has a graph-theoretic analogue.
For instance, the counterpart to \ESZ' Theorem is Ramsey's Theorem, and the graph-theoretic equivalent to packing density is \emph{inducibility} \cite{pippenger1975inducibility}, and so forth.

\subsection{Our Contribution}

In this paper we consider the permutation-theoretic analogue of \emph{combinatorial designs}.
For integers $n \ge k \ge 1$, we say that a permutation
$\pi \in \Sn$ is \emph{$k$-balanced}
if every order-$k$ pattern occurs in $\pi$ equally
often, i.e. exactly $\binom{n}{k}/k!$ times.
By way of example, $\pi = \mathtt{2413}$ is $2$-balanced. 
So, for which values of $n$ and $k$ does there exist a $k$-balanced permutation $\pi\in\mathbb{S}_{n}$? We answer this question fully.

\paragraph{Constructions for $k \le 3$.}
It is not hard to see that any $k$-balanced permutation is $(k-1)$-balanced.
Therefore, for an $n$-element permutation to be $k$-balanced, we must at least have $r! \mid \binom{n}{r}$, for every $r \le k$.
It is a straightforward exercise to see that these necessary divisibility conditions suffice in the case $k=2$,
that is, $2$-balanced permutations exist for every \textit{admissible} $n$. 
Is the same true for $k=3$?
Our first result answers this in the positive, resolving an open question of \cite{cooper2008symmetric}.

\begin{thm}
    \label{thm_intro_3balanced}
    For $k\leq 3$ and every $n$, there exists a $k$-balanced permutation in $\Sn$ iff $n$ is admissible.
\end{thm}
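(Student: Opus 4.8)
The plan is to force as many as possible of the equalities among the six length-$3$ pattern counts of $\pi$ through symmetry, reducing $3$-balancedness to a single scalar equation, and then to solve that equation with a flexible explicit family. (The necessity of admissibility is the easy direction already noted above; I focus on sufficiency.)

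For the reduction, recall that $\Sn$ carries an action of the dihedral group of order $8$ generated by reverse $\pi\mapsto\pi^r$, complement $\pi\mapsto\pi^c$ and inverse $\pi\mapsto\pi^{-1}$ — the eight symmetries of the permutation matrix — and that this group acts on the six length-$3$ patterns with exactly two orbits, $\{\mathtt{123},\mathtt{321}\}$ and $\{\mathtt{132},\mathtt{213},\mathtt{231},\mathtt{312}\}$. A permutation invariant under all eight symmetries would thus be automatically $3$-balanced, but for $n\ge 2$ no permutation is fixed by $\pi\mapsto\pi^r$ or by $\pi\mapsto\pi^c$. The best one can do is invariance under the cyclic subgroup of order $4$ generated by the $90^\circ$ rotation $g$ of the permutation matrix, and a short computation gives $g(\pi)=\pi\iff\pi^2=w_0$, where $w_0\in\Sn$ is the reversal $i\mapsto n+1-i$. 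Such a $\pi$ still has its length-$3$ pattern counts constant on the two orbits above, so $\pc{123}{\pi}=\pc{321}{\pi}$ and $\pc{132}{\pi}=\pc{213}{\pi}=\pc{231}{\pi}=\pc{312}{\pi}$; the same rotation swaps $\mathtt{12}\leftrightarrow\mathtt{21}$, so $\pi$ is automatically $2$-balanced. Since $2\pc{123}{\pi}+4\pc{132}{\pi}=\binom{n}{3}$, the upshot is: a permutation with $\pi^2=w_0$ is $3$-balanced if and only if $\pc{123}{\pi}=\binom{n}{3}/6$. The cases $k\le 1$ are vacuous, and for $k=2$ it suffices to note that $w_0$ is a product of $\lfloor n/2\rfloor$ disjoint transpositions (plus one fixed point when $n$ is odd), so by the cycle-type criterion for square roots it has a square root in $\Sn$ exactly when $\lfloor n/2\rfloor$ is even, i.e. exactly when $n\equiv 0,1 \pmod 4$, the admissibility condition for $k=2$; any such square root is $2$-balanced by the above.

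For $k=3$ it remains to realize $\pc{123}{\pi}=\binom{n}{3}/6$ by some square root of $w_0$. Concretely, every square root of $w_0$ comes from pairing the $\lfloor n/2\rfloor$ transpositions $(i\ \ n+1-i)$ into $\lfloor n/4\rfloor$ pairs and splicing each pair $\{(a\ a'),(b\ b')\}$ into one of the two $4$-cycles $(a\ b\ a'\ b')$ or $(a\ b'\ a'\ b)$ (the middle point, if $n$ is odd, stays fixed). I would fix one structured square root $\pi_0$ — say the one obtained by pairing consecutive transpositions — evaluate $\pc{123}{\pi_0}$ in closed form, and then introduce symmetry-preserving local moves (reorienting a single $4$-cycle, or re-pairing two adjacent transposition-pairs) whose effect on $\pc{123}$ can be computed exactly. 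A discrete intermediate-value argument along such moves should identify the set of values of $\pc{123}$ attained by square roots of $w_0$ as a full interval of integers subject to one congruence constraint forced by $\pi^2=w_0$ — part of which is the parity $\pc{123}{\pi}\equiv\tfrac{1}{2}\binom{n}{3}\pmod 2$ — and the target $\binom{n}{3}/6$ is an integer satisfying that constraint precisely when $6\mid\binom{n}{3}$, i.e. precisely when $n$ is admissible for $k=3$. A bounded number of residue classes of $n$ may require a second explicit family or a handful of hand-checked base cases, and an $n\mapsto n+c$ recursion (inserting balanced blocks) is a plausible fallback for the bookkeeping.

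The crux is this last step. Since $\pi^2=w_0$ ties four entries of the permutation matrix together, the usual device for adjusting $\pc{123}$ — swapping two adjacent values — breaks the symmetry, so one is forced to work with genuinely $4$-fold-symmetric moves and to control their effect on $\pc{123}$ finely enough that every admissible value $\binom{n}{3}/6$ is actually hit, not merely approached. Obtaining clean closed forms for $\pc{123}{\pi_0}$ and for the admissible step sizes, and verifying that their span covers the target for all admissible $n$, is where the main difficulty lies.
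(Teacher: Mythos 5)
Your reduction is exactly the paper's: you show that any rotation-invariant $\pi$ (equivalently, any $\pi$ with $\pi^2=w_0$) automatically has $\pc{123}{\pi}=\pc{321}{\pi}$ and $\pc{132}{\pi}=\pc{213}{\pi}=\pc{231}{\pi}=\pc{312}{\pi}$, so $3$-balancedness collapses to the single equation $\pc{123}{\pi}=\binom{n}{3}/6$. This matches the paper's Lemmas 3.4 and 3.6. For $k=2$ your argument is a clean alternative to the paper's bubble-sort proof: rotation-invariant permutations exist iff $n\equiv 0,1\pmod 4$ and are automatically $2$-balanced, which is correct and arguably nicer. So far so good.

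The genuine gap is the $k=3$ construction, which you do not carry out. You propose starting from a canonical square root $\pi_0$ of $w_0$ and adjusting $\pc{123}$ via symmetry-preserving local moves (reorienting a $4$-cycle or re-pairing two transposition pairs), then invoking a discrete intermediate-value argument. But each such move changes four entries of the permutation at once, and the resulting change in $\pc{123}$ can be as large as $\Theta(n^2)$ (each moved point participates in $\Theta(n^2)$ triples), so without a much finer analysis the step sizes are nowhere near the $\pm 2$ granularity a discrete IVT would need. You also leave unverified that the set of attainable $\pc{123}$ values over all square roots of $w_0$ is a single arithmetic progression rather than a set with gaps, and you do not determine the full congruence constraint. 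These are precisely the claims that would need proving, and you acknowledge as much in your final paragraph. The paper avoids this obstruction by working with a restricted sub-family: it fixes the bipartition $A=\{m+1,\dots,2m\}$, $B=\{1,\dots,m\}$, translates the condition $\pc{123}{\pi}=\binom{n}{3}/6$ into a polynomial equation in the pattern counts of the ``quarter'' permutation $\sigma\in\mathbb{S}_m$, and exhibits an explicit $\sigma$ (three ascending descending runs of length $\ell$, amended by inserting a few extra points) for which that equation becomes a solvable linear condition in a free parameter. Each of the six admissible residues mod $36$ gets its own amendment, and a finite table handles small $n$. Your plan could in principle converge to a proof, but as written it replaces the paper's explicit solvable family with an existence claim (the IVT step) that is not established and is not obviously true.
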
 

Our construction for $k=3$ is explicit and relies heavily on \emph{rotation-invariance} (see \Cref{section:3balanced_construction}).
The divisibility conditions for $3$-balanced permutations permit six remainders in $\mathbb{Z}/36\mathbb{Z}$,
and we provide infinite families corresponding to each.
These families are all based on a single basic construction, and together cover all
but a spurious collection of $19$ admissible values of $n$, which we handle individually.

\Cref{thm_intro_3balanced} proves the existence of $3$-balanced ``designs''.
This has many noteworthy counterparts.
For example, a \emph{$(r,s; n)$-Steiner system} is a collection
 of $r$-element subsets of $\{1, 2, \dots, n\}$, such that every
 $s$-element subset is contained in the same number of members in the system.
 Such a system can exist only if $n, r$ and $s$ satisfy certain arithmetic conditions,
 and a major discovery of Keevash \cite{keevash2014existence} (see also \cite{glock2016existence}) says that
 for fixed $r>s>1$ and for large enough $n$, if the above arithmetic conditions
 are satisfied, then a Steiner system exists.
 Similarly, in graph theory, Janson and Spencer \cite{janson1992probabilistic} considered \textit{proportional graphs},
 in which every subgraph of a fixed size appears the exact number of times as expected. 
 They showed that with respect to order-$3$ subgraphs, there exist infinitely many proportional graphs.
 Finally, with regards to permutations,
 Cooper and Petrarca \cite{cooper2008symmetric} noted the existence of $3$-balanced permutations for $n=9$ (the least admissible $n$), and our \Cref{thm_intro_3balanced} extends this to \textit{every} admissible $n$.

\paragraph{Non-existence for $k \ge 4$.} For our second result, we prove that there are no $4$-balanced permutations. As the $k$-balanced condition is downwards closed in $k$, we obtain the following.

\begin{thm}
    \label{thm_no_4_balanced}
    There are no $k$-balanced permutations for $k\geq 4$.
\end{thm}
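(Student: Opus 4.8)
The plan is to deduce the statement from the case $k = 4$: the introduction records that $k$-balancedness is downward closed, and since we always assume $n \ge k$, any $k$-balanced permutation with $k \ge 4$ is in particular $4$-balanced. To rule out $4$-balanced permutations I would use a \emph{quadratic} identity among pattern counts. The guiding point is that every \emph{linear} relation among the entries of the $4$-profile --- the span of $\sum_{\tau \in \mathbb{S}_4}\pc{\tau}{\pi} = \binom{n}{4}$ together with the ``deletion'' relations that average an order-$j$ count over the $j$-subsets inside an order-$4$ window --- is satisfied by the balanced vector (essentially because the uniform permuton already realizes the uniform density vector), so a linear obstruction cannot exist and one must pass to a genuinely quadratic relation whose defect at the balanced point is nonzero.

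Concretely, fix $\pi \in \Sn$ and let $D \subseteq \binom{[n]}{2}$ be the set of inversions of $\pi$, i.e.\ the occurrences of the pattern $\mathtt{21}$, so that $|D| = \pc{21}{\pi}$. I would evaluate $\pc{21}{\pi}^{2} = |D \times D|$ by splitting according to $|p \cap q| \in \{0,1,2\}$ for $(p,q) \in D \times D$. The diagonal $p = q$ contributes $\pc{21}{\pi}$. Pairs with $|p \cap q| = 1$ span a $3$-element set $S$, and their number depends only on $\pi|_S$: it equals $d(d-1)$, where $d \le 3$ is the number of inversions of the order-$3$ pattern $\pi|_S$, so summing over $S$ yields $6\,\pc{321}{\pi} + 2\,\pc{231}{\pi} + 2\,\pc{312}{\pi}$. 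Pairs with $p \cap q = \emptyset$ span a $4$-element set $S$ and form one of its three perfect matchings with both parts inversions; summing over $S$ yields $\sum_{\tau \in \mathbb{S}_4} 2\mu(\tau)\,\pc{\tau}{\pi}$, where $\mu(\tau)$ is the number of perfect matchings of $\{1,2,3,4\}$ both of whose pairs are inversions of $\tau$, and a short check shows $\sum_{\tau \in \mathbb{S}_4}\mu(\tau) = 18$ (each of the three matchings has both pairs inverted in exactly $6$ of the $24$ patterns). Hence, for \emph{every} $\pi \in \Sn$,
\[
\pc{21}{\pi}^{2} \;=\; \pc{21}{\pi} \;+\; 6\,\pc{321}{\pi} + 2\,\pc{231}{\pi} + 2\,\pc{312}{\pi} \;+\; \sum_{\tau \in \mathbb{S}_4} 2\mu(\tau)\,\pc{\tau}{\pi}.
\]

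If $\pi$ were $4$-balanced it would also be $3$- and $2$-balanced, so $\pc{21}{\pi} = \tfrac{1}{2}\binom{n}{2}$, each order-$3$ count equals $\tfrac{1}{6}\binom{n}{3}$, and each order-$4$ count equals $\tfrac{1}{24}\binom{n}{4}$. Substituting into the identity (and using $\sum_\tau\mu(\tau) = 18$) turns it into
\[
\Bigl(\tfrac{1}{2}\binom{n}{2}\Bigr)^{2} \;=\; \tfrac{1}{2}\binom{n}{2} + \tfrac{5}{3}\binom{n}{3} + \tfrac{3}{2}\binom{n}{4},
\]
and expanding the binomial coefficients shows that the right-hand side exceeds the left by exactly $\tfrac{n(n-1)(2n+5)}{72}$, which is strictly positive for every $n \ge 2$. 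This contradiction shows that no $4$-balanced permutation exists, and therefore none is $k$-balanced for any $k \ge 4$. (The identity is moreover quantitatively robust: the difference of its two sides is a quadratic in the $24$-dimensional profile vector with gradient of bounded $\ell_\infty$-norm that vanishes on all genuine profiles yet equals $-\Theta(n^{3})$ at the balanced vector, which by a mean-value estimate places every $\pi \in \Sn$ at $\ell_1$-distance $\Omega(n^{3})$ from $4$-balanced --- the bound claimed for $k = 4$ in the abstract.)

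The only genuinely nontrivial step is to notice that $\pc{21}{\pi}^{2}$, the square of the inversion count, telescopes through the overlap structure into an exact combination of $2$-, $3$- and $4$-point counts carrying a nonzero defect at the balanced vector; everything afterwards (the value $\sum_\tau\mu(\tau) = 18$, the binomial expansion, the sign of the defect, and the convention $n \ge k$ used in the reduction) is a routine verification. It is instructive to see why the argument does not apply to $k = 3$: for a merely $3$-balanced $\pi$ the identity pins down only the $2$- and $3$-point terms, leaving $\sum_\tau 2\mu(\tau)\,\pc{\tau}{\pi} \ge 0$ undetermined, so it degenerates into the harmless inequality $\pc{21}{\pi}^{2} \ge \pc{21}{\pi} + \tfrac{5}{3}\binom{n}{3}$ rather than an equation.
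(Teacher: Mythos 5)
Your proof is correct and reaches exactly the contradiction equation the paper derives (Equation (4.2), which simplifies to $n(n-1)(2n+5)=0$); the overall strategy — establish a quadratic identity relating the $2$-, $3$-, and $4$-profiles, and evaluate it at the balanced vector — is the same. What differs is how the identity is obtained. The paper (Lemma 4.2) samples four indices uniformly with replacement, double-counts the probability that $(i,j)$ and $(k,l)$ both form ascending pairs, and must condition on every equality/ordering pattern among the samples, with the case analysis deferred to a table in Appendix B. You instead square the inversion count $\pc{21}{\pi}=|D|$ and stratify $D\times D$ by $|p\cap q|\in\{2,1,0\}$, which yields the complement-equivalent identity (related to the paper's by $\tau\mapsto\bar\tau$; for a $4$-balanced, hence $2$-balanced, permutation $\pc{12}{\pi}=\pc{21}{\pi}$, so this is immaterial). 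Your derivation is arguably more transparent: the $3$-pattern coefficients are visibly $d(d-1)$ in the inversion count $d$, the $4$-pattern coefficients are $2\mu(\tau)$ for a natural perfect-matching count, and the total $\sum_\tau\mu(\tau)=18$ follows from a one-line symmetry argument rather than a tally. The one loose spot is the closing parenthetical claiming the $\Omega(n^3)$ distance bound follows ``by a mean-value estimate'' from a ``gradient of bounded $\ell_\infty$-norm'': the gradient of $\pc{21}{\pi}^2$ in the $\pc{21}$-coordinate is $2\pc{21}{\pi}=\Theta(n^2)$, not bounded, so a naive mean-value bound would give only $\Omega(n)$. The correct robust argument (the paper's Theorem 5.4) first invokes Lemma 5.2, by which a $4$-profile within $\delta$ of uniform forces the $2$-profile to be within $O(\delta/n^2)$ of uniform, and only then does the $\Theta(n^3)$ defect in the identity yield $\delta=\Omega(n^3)$; since this parenthetical is not part of proving Theorem 1.2 itself, it does not affect the correctness of your main argument.
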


The proof of \Cref{thm_no_4_balanced} follows by establishing a simple polynomial identity relating entries of the $r$-profile of any permutation, for $r \le 4$ (see \Cref{section:4balanced_nonex}). The $4$-balanced profile violates this
identity. This resolves another open question of \cite{cooper2008symmetric}, who carried out a large (non-exhaustive) computer search for $n=64$, the smallest admissible $4$-balanced cardinality. This explains why none were found.

\Cref{thm_no_4_balanced} is closely related to a result of
Naves, Pikhurko and Scott \cite{naves2018unproportional}, who proved the non-existence of proportional graphs,
in which every order-$4$ subgraph appears exactly the expected number of times. 
Their proof similarly relies on a 
polynomial identity. Our result is also related to quasirandom permutations,
i.e.,  infinite families in which the normalised $k$-profile converges \textit{asymptotically} to uniform, as $n$ tends to infinity.
The theory of graph limits and graphons \cite{lovasz2012large} has been highly influential 
in graph theory in recent years, and an analogous theory concerning limits of
permutations and the notion of permutons has been investigated as well,
e.g., \cite{cooper2004quasirandom,hoppen2013limits, hoppen2011limits}.
Notions of pseudo-random graphs were introduced by Thomason \cite{thomason1987pseudo}
and a remarkable result of Chung, Graham and Wilson \cite{chung1989quasi} shows that a graph
is pseudo-random iff it has the ``right'' number of $4$-cycles. Proving
a conjecture of R. Graham (see \cite{cooper2004quasirandom}), Kr\`al' and Pikhurko \cite{kral2013quasirandom}
proved that a permuton is quasirandom iff it is $4$-symmetric. Our techniques in
proving \Cref{thm_no_4_balanced} differ from those of \cite{kral2013quasirandom}, 
and we point out the difficulty in applying the latter to 
the discrete setting in \Cref{subsect:comparison_kp}.

\paragraph{Minimum distance from $k$-balanced.}
If (as we show)
$k$-balanced permutations do not exist for $k \ge 4$, 
how \textit{close} to balanced can they be?
Formally, we define the \textit{distance} of $\pi \in \Sn$ from being $k$-balanced,
to be the $\ell_\infty$-distance between $\pi$'s $k$-profile and the uniform vector $(\binom{n}{k}/k!)\mathbbm{1}$.
We prove:

\begin{thm}
    \label{thm_lb_distance}
    For $k\geq 4$, the distance of every $n$-element permutation from being $k$-balanced is $\Omega_n(n^{k-1})$.
\end{thm}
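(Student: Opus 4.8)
\emph{Strategy.} The plan is to reduce the statement to the single case $k = 4$, and there to combine the polynomial identity behind \Cref{thm_no_4_balanced} with a first-order (Lipschitz) estimate of that identity around the uniform profile. Throughout, for a pattern $\tau$ I write $p_\tau(\pi) \eqdef \Lambda_\tau(\pi)/\binom{n}{|\tau|}$ for the density of $\tau$ in $\pi$, where $\Lambda_\tau(\pi)$ counts the occurrences of $\tau$, and I let $\delta_r(\pi)$ denote the $\ell_\infty$-distance of $\pi$ from being $r$-balanced.

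\emph{Reduction to $k=4$.} For $\tau' \in \mathbb{S}_4$ and $\tau \in \mathbb{S}_k$ the deletion/averaging identity gives $p_{\tau'}(\pi) = \sum_{\tau \in \mathbb{S}_k} p_\tau(\pi)\, q_{\tau'}(\tau)$, where $q_{\tau'}(\tau) \eqdef \Pr_S[\tau|_S \cong \tau']$ for a uniformly random $4$-subset $S$ of $[k]$; by symmetry $\sum_{\tau \in \mathbb{S}_k} q_{\tau'}(\tau) = k!/24$, and since $\sum_{\tau} p_\tau(\pi) = 1$ and $\tfrac1{24} = \sum_\tau \tfrac1{k!}q_{\tau'}(\tau)$, subtracting gives $|p_{\tau'}(\pi) - \tfrac1{24}| \le \tfrac{k!}{24}\max_{\tau \in \mathbb{S}_k}|p_\tau(\pi) - \tfrac1{k!}|$. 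Passing to profiles, this reads $\delta_4(\pi) \le \bigl(\tfrac{k!}{24}\bigr)^2 \tfrac{(n-k)!}{(n-4)!}\,\delta_k(\pi) = O_k(n^{4-k})\,\delta_k(\pi)$, so a universal bound $\delta_4(\pi) = \Omega(n^3)$ immediately yields $\delta_k(\pi) = \Omega(n^{k-1})$ for every $k \ge 4$. It therefore suffices to treat $k=4$.

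\emph{The case $k=4$.} Let $\mathcal{P} = 0$ be the polynomial identity established in the proof of \Cref{thm_no_4_balanced}: a polynomial relation, with coefficients depending on $n$, among the entries $\{\Lambda_\tau(\pi)\}_{\tau \in \mathbb{S}_{\le 4}}$, valid for every $\pi \in \Sn$. Write $c(n) \eqdef \mathcal{P}\bigl(\binom{n}{|\tau|}/|\tau|!\bigr)$ for the value of $\mathcal{P}$ at the uniform profile; by construction $c(n) \neq 0$, and this nonvanishing is exactly the obstruction exploited in \Cref{thm_no_4_balanced}. We may assume $\delta_4(\pi) = o(n^4)$, as otherwise the claimed bound is immediate. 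In this regime the reduction argument, applied from level $4$ down to levels $3$ and $2$, shows that every coordinate $\Lambda_\tau(\pi)$ of $\pi$'s profile lies within $\delta_4(\pi)$ of its uniform value when $|\tau| = 4$, and within $O(\delta_4(\pi)/n^{4-|\tau|})$ when $|\tau| < 4$; in particular $\Lambda_\tau(\pi) = \Theta(n^{|\tau|})$ along the entire segment joining $\pi$'s profile to the uniform profile. Applying the mean value theorem to $\mathcal{P}$ along this segment and using $\mathcal{P}(\text{profile of }\pi) = 0$, we obtain $|c(n)| \le L(n)\cdot \delta_4(\pi)$, where $L(n)$ bounds the partial derivatives of $\mathcal{P}$ over that segment (the lower-order coordinates, perturbed only by $O(\delta_4(\pi)/n)$, contribute lower-order terms which are absorbed). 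The proof is then complete once one shows $|c(n)|/L(n) = \Omega(n^3)$.

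\emph{The two estimates, and the main obstacle.} Two quantitative facts must be read off from the explicit shape of $\mathcal{P}$. First, although $\mathcal{P}$ is expected to be quadratic in the level-$4$ entries (as is the analogous graph identity of Naves, Pikhurko and Scott), each of size $\Theta(n^4)$, so that $c(n)$ could a priori be of order $n^8$, its leading order should cancel: rewriting $\mathcal{P}=0$ in densities yields a polynomial identity $\tilde{\mathcal{P}}(p(\pi); 1/n) = 0$ whose $1/n \to 0$ limit is a relation valid for all permutons, and the uniform density vector is itself permuton-achievable, so $\tilde{\mathcal{P}}(\text{uniform}; 0) = 0$; hence $c(n)$ is governed by the first-order-in-$1/n$ correction, and $|c(n)| = \Theta(n^7)$. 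Second, the partial derivatives of a quadratic form in $\Theta(n^4)$-scale quantities are $O(n^4)$, and the vanishing of the \emph{value} of that form at the uniform point does not force its \emph{gradient} to vanish there, so $L(n) = O(n^4)$. Combining, $\delta_4(\pi) \ge |c(n)|/L(n) = \Omega(n^7)/O(n^4) = \Omega(n^3)$, which with the reduction finishes the proof. I expect the main work to lie precisely in these two bookkeeping steps, carried out against the concrete identity from \Cref{thm_no_4_balanced}: verifying that the $n^8$-contributions to $c(n)$ indeed cancel while the surviving $n^7$-coefficient is nonzero, and confirming that no monomial of $\mathcal{P}$ produces a derivative exceeding $O(n^4)$ near the uniform profile. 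Should $\mathcal{P}$ carry terms of degree higher than two, the same scheme applies with the powers adjusted; the fact that the resulting bound is tight for $k=4$ serves as a useful consistency check.
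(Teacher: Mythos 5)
Your high-level strategy is sound and genuinely different from the paper's: the paper handles each $k\geq 4$ directly by expanding the product $\pc{12}{\pi}\cdot\pc{(1,\ldots,k-2)}{\pi}$ in two ways and comparing the $N^{k-1}$-coefficients (using a $k$-dependent identity each time), whereas you observe that the downward bound $\delta_{\pi,k-1}\leq \tfrac{k^2}{n-k+1}\delta_{\pi,k}$ iterated from $k$ to $4$ lets you deduce the general case from $k=4$ alone. That reduction is correct and is an appealing streamlining of the argument.

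However, your quantitative description of the $k=4$ case mis-models the identity in a way you should be aware of. The identity of \Cref{lemma:relation_2_3_4_profile} is \emph{not} quadratic in the level-$4$ entries the way the Naves--Pikhurko--Scott graph identity is; the only nonlinearity is the square of the single level-$2$ entry $\pc{12}{\pi}=\Theta(n^2)$, and everything at levels $3$ and $4$ enters linearly with $O(1)$ coefficients. Consequently both sides of the identity are $\Theta(n^4)$, not $\Theta(n^8)$, and $c(n)=\mathcal{P}(\text{uniform})$ is $\Theta(n^3)$, not $\Theta(n^7)$ (indeed \Cref{eq:4bal_contradiction} simplifies to a constant times $n(n-1)(2n+5)$). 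Likewise the largest partial derivative is $\partial\mathcal{P}/\partial\pc{12}{\pi}=2\pc{12}{\pi}-1=\Theta(n^2)$, not $O(n^4)$. If you now carry out the Lipschitz estimate naively with $L=\Theta(n^2)$ and $|c(n)|=\Theta(n^3)$ you only get $\delta_4=\Omega(n)$. The step that actually saves the argument --- and which you gesture at but attribute to ``lower-order terms being absorbed'' --- is that the coordinate carrying the large $\Theta(n^2)$ derivative, namely $\pc{12}{\pi}$, is perturbed only by $O(\delta_4/n^2)$ (two applications of the downward bound), so its contribution is $\Theta(n^2)\cdot O(\delta_4/n^2)=O(\delta_4)$; the level-$3$ and level-$4$ coordinates have $O(1)$ derivatives and perturbations $O(\delta_4/n)$ and $\delta_4$. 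Summing gives $|c(n)|\leq O(\delta_4)$ and hence $\delta_4=\Omega(n^3)$. Both of your estimates are wrong but their errors cancel in the ratio; once you check against the actual identity you will find the bookkeeping looks quite different from what you predicted, even though the end result and the overall scheme hold up.
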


Our proof of \Cref{thm_lb_distance} can be viewed as a robust version of
our proof of \Cref{thm_no_4_balanced}, using the same
polynomial identity (see \Cref{section:min_dist_kbal}).  
We prove that the bound in \Cref{thm_lb_distance} is tight for $k = 4$.
That is, we give a construction of permutations that attain this distance.
This construction is based on a modification of the well-known \ESZ permutation
\cite{erdos1935combinatorial}.
For larger $k$ the tightness of our bound remains open.
However, we note that \textit{all} entries
in the $k$-profile of a uniformly random permutation in $\mathbb{S}_n$ 
are, with probability $>99\%$ (for large enough $n$), within distance
$\Theta_n(n^{k - 1/2})$ from $\binom{n}{k}/k!$ (see \Cref{subsubsect:random_perms}).
So, in the remaining cases, our bound is at most $\mathcal{O}_n\left(\sqrt{n}\right)$-far from tight. 

\paragraph{Relation between profiles and permutations.} Our last result is of a slightly different flavour,
and is of interest only when $k=k(n)$ grows with $n$. Given a $k$-profile we seek properties which are
common to all $n$-element permutations that have this profile.

\begin{thm}
    \label{thm_perm_to_points}
    There exists a set of $\widetilde{\Omega}(k^2/n)$ points in the $[n] \times [n]$ grid, 
    such that any two $n$-element permutations with the same $k$-profile, coincide in their restriction to this set.
\end{thm}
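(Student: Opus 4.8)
The plan is to recast the statement as a claim about the fibres of the profile map $\pi \mapsto \big(N_\tau(\pi)\big)_{\tau\in\mathbb{S}_k}$: I want a large set $S \subseteq [n]\times[n]$ on which incidence with $P_\pi := \{(i,\pi(i)):i\in[n]\}$ is constant along each fibre, i.e. $P_\pi\cap S = P_\sigma\cap S$ whenever $\pi,\sigma\in\mathbb{S}_n$ have the same $k$-profile. (Only the regime $k\gtrsim\sqrt n$, up to polylogarithmic factors, carries content, as otherwise $\widetilde\Omega(k^2/n)$ is below $1$.) I would first record the standard projection identity $\sum_{\tau\in\mathbb{S}_k} c_{\sigma,\tau}\,N_\tau(\pi) = \binom{n-\ell}{k-\ell}\,N_\sigma(\pi)$ for $\sigma\in\mathbb{S}_\ell$, $\ell\le k$, where $c_{\sigma,\tau}$ is the number of occurrences of $\sigma$ in $\tau$; this shows the $k$-profile determines the $\ell$-profile for every $\ell\le k$. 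Hence it suffices to produce $\widetilde\Omega(k^2/n)$ grid points $(i,j)$ for which the indicator $\mathbbm{1}[\pi(i)=j]$ is a function of the $k$-profile of $\pi$.

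Next I would build a family of profile-invariant ``marginal'' statistics. For $\ell\le k$ and $p,r\in[\ell]$, let $F_{\ell,p,r}(\pi)$ count the $\ell$-subsets $I\subseteq[n]$ in which the $p$-th smallest index of $I$ carries the $r$-th smallest value of $\pi|_I$; then $F_{\ell,p,r}(\pi)=\sum_{\tau\in\mathbb{S}_\ell,\ \tau(p)=r} N_\tau(\pi)$ is profile-determined. Re-summing over the distinguished point yields $F_{\ell,p,r}(\pi)=\sum_{x\in P_\pi} W_{\ell,p,r}(a_x,b_x,c_x,d_x)$, where $a_x,b_x,c_x,d_x$ count the points of $P_\pi$ in the four open quadrants at $x$, and $W_{\ell,p,r}$ is an explicit polynomial of degree $\le\ell-1$ (a sum of products of four binomial coefficients). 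Crucially the quadruple $(a_x,b_x,c_x,d_x)$ recovers $x$ itself — its index is $a_x+c_x+1$ and its value $c_x+d_x+1$ — so the $k$-profile hands us the values of a large bank of ``quadrant-polynomial'' weighted counts over $P_\pi$.

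The heart of the proof is then a rank/linear-algebra argument over this bank. The goal is to show that the span of the weights $\{W_{\ell,p,r}\}$ evaluated on the quadrant tuples realised by $\pi$, together with the constraint that $P_\pi$ is a permutation matrix, forces the value of $\mathbbm{1}[\pi(i)=j]$ for $\widetilde\Omega(k^2/n)$ grid points $(i,j)$ — concretely, that for this many $(i,j)$ the ``delta'' functional $x\mapsto\mathbbm{1}[x=(i,j)]$ lies in the relevant span, whence $\mathbbm{1}[\pi(i)=j]$ is pinned down by the profile. Dually: if $\pi,\sigma$ share a $k$-profile but $P_\pi\triangle P_\sigma$ meets this pinned set $S$, then some $F_{\ell,p,r}$ separates them, a contradiction. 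I expect the main difficulty to be the quantitative accounting here: there are $\Theta(k^3)$ functionals $F_{\ell,p,r}$ but they are highly dependent, and — more seriously — this is not a linear system over a fixed vector space, since each weight $W_{\ell,p,r}(a_x,\dots,d_x)$ depends on $\pi$ through the quadrant counts. The factor $1/n$ should emerge precisely here: each functional is one scalar aggregating all $\approx n$ points of $P_\pi$, so $\Theta(k^2)$ effectively independent functionals can pin down only $\Theta(k^2/n)$ of the point-locations; the polylogarithmic slack ($\widetilde\Omega$ rather than $\Omega$) should come from turning a rank estimate into an explicit, profile-independent choice of $S$. (In the narrow window $k\approx\sqrt n$ one could alternatively try to distil a single pinned point from an extremal monotone subsequence via the \ESZ theorem, but this does not appear to scale to the general bound.)
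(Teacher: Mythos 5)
Your opening moves are on target: you correctly restrict attention to $k\gtrsim\sqrt n\cdot\mathrm{polylog}$, note that the $k$-profile determines all $\ell$-profiles for $\ell\le k$, and identify the right intermediate object --- a bank of profile-determined scalar statistics obtained by summing an explicit weight over the permutation's points. Your $F_{\ell,p,r}$ are close cousins of what the paper actually uses, which is the evaluation $p(\pi)\eqdef\sum_i p(i,\pi(i))$ for $p$ a bivariate polynomial; Proposition~\ref{prop:k_prof_to_poly} shows that the $k$-profile determines $p(\pi)$ for every bivariate $p$ of degree $<k$, via essentially the probabilistic encoding you sketch.

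Where the proposal breaks down is the ``heart'' step, and you flag the fatal issue yourself: your weights $W_{\ell,p,r}(a_x,b_x,c_x,d_x)$ depend on $\pi$ through the quadrant count $c_x$, so there is no fixed finite-dimensional ambient space in which to run a rank argument, and ``put the delta functional in the span'' is not a statement with a clear meaning. The paper does nothing of this kind. Its key idea, which your proposal is missing entirely, is to import a quantitative approximation-theory fact: by the Paturi bound on the approximate degree of symmetric Boolean functions (\Cref{lemma:apx_deg_sym}, plus Minsky--Papert symmetrisation), the univariate indicator $\mathbbm{1}_{\{t\}}$ on $\{0,\dots,n\}$ admits a pointwise $1/3$-approximation by a polynomial of degree $O\big(\sqrt{n\min(t,n-t)}\big)$. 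Taking products of two such approximators in $x$ and $y$, and raising to a $\Theta(\log n)$ power to beat down the off-point error below $1/(2n)$, one gets a bivariate polynomial $\widetilde{\mathbbm{1}}_{(a,b)}$ of degree $\widetilde O(\sqrt{n\min(a,n-a)}+\sqrt{n\min(b,n-b)})$ that is $\ge 1$ at $(a,b)$ and $\le 1/(2n)$ elsewhere on the grid. Evaluating it on $\pi$ then separates $(a,b)\in P_\pi$ (value $\ge 1$) from $(a,b)\notin P_\pi$ (value $\le 1/2$), and for $(a,b)$ in the four corner regions this degree is $<k$, so the evaluation is profile-determined.

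Two concrete consequences of the gap. First, your strategy does not locate where the pinned points live: the paper's set $\mathcal D$ is forced to sit in the four corners precisely because the square-root degree bound is small only when $a,b$ are near $0$ or $n$; an arbitrary interior point $(i,j)$ cannot be pinned this way. Second, your heuristic for the count --- ``$\Theta(k^2)$ effectively independent scalars, each aggregating $n$ points, so $\Theta(k^2/n)$ pins'' --- is not the mechanism in the paper; the count there is geometric, coming from how many $(a,b)$ satisfy $\sqrt{na}+\sqrt{nb}\lesssim k/\log n$ (a corner region of side $\approx k^2/(n\log^2 n)$), not from a counting-dimensions argument. In short: same starting framework, but you are missing the approximate-indicator construction via approximate degree of symmetric Boolean functions, which is the whole engine of the proof, and the replacement you propose is not viable as sketched.
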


Our proof of \Cref{thm_perm_to_points} is established by drawing a connection
between polynomials and $k$-profiles (see \Cref{section:profiles_to_perms}).
We introduce a notion of evaluating a bivariate polynomial on a permutation,
and show that fixing the $k$-profile
of a permutation, also fixes the evaluation of all bivariate polynomials of degree $<k$ on it. 
Using results from approximation theory, this allows us to construct a  
family of low-degree polynomials, whose evaluations express permutation points in terms of the $k$-profile alone. 

\paragraph{Open questions} There remain many interesting questions. For instance, is the distance lower bound tight? And for how many patterns simultaneously? We refer the reader to the discussion in \Cref{sect:discussion}.
\section{Preliminaries}

As usual, we denote the symmetric group of order $n$ by $\mathbb{S}_n$.
By default we write permutations in $\mathbb{S}_n$ in the \emph{one-line notation}, and
think of a permutation as a bijection from $[n]$ to itself,
where  $[n] \eqdef \{1, 2, \dots, n\}$.  Any finite set of points in the plane, no two of which are
axis-aligned, defines a permutation.
Let $\mathcal{A} = \{ (x_1, y_1), (x_2, y_2), \dots, (x_n, y_n) \} \subset \RR^2$ be a set of points,
where $x_1<\ldots<x_n$ and all $y_i$ are distinct. 
Then, corresponding to $\mathcal{A}$ is the permutation $\sigma \in \Sn$ (denoted $\mathcal{A} \cong \sigma$), where $y_{\sigma^{-1}(1)} < y_{\sigma^{-1}(2)} < \dots < y_{\sigma^{-1}(n)}$.

The \emph{order-isomorphism} of permutations is in the focus of our work.

\begin{definition}[order-isomorphism]
    \label{defn:order_isom}
    Let $\pi \in \Sn$ and $\tau \in \mathbb{S}_k$ be permutations, where $k \le n$. Let $S = \{s_1, \dots, s_k\} \subseteq [n]$, where $s_1 < \dots < s_k$. We say that $\pi$ induced on $S$ is order-isomorphic to $\tau$ if:
    \[
        \forall i,j \in [k]:\ \pi(s_i) < \pi(s_j) \iff \tau(i) < \tau(j)
    \]    
and we denote this condition by $\pi(S) \cong \tau$.
When this is the case, we say the pattern $\tau$ occurs in $\pi$. 
The number of occurrences of $\tau$ in $\pi$ is denoted $\pc{\tau}{\pi}$, where:
\[
    \pc{\tau}{\pi} \eqdef \Big| \Big\{ S \in \binom{[n]}{k}\ :\ \pi(S) \cong \tau \Big\} \Big|
\]
\end{definition}

Thus, e.g., $\pc{123}{\pi}$ indicates the number of ascending triples in $\pi$. 
We also define:

\begin{definition}[$k$-profile of a permutation]
    \label{defn:perm_profile}
    Let $\pi \in \Sn$ be a permutation and let $1 \le k \le n$ be an integer. The $k$-profile of $\pi$ is defined as follows:
    \[
        \mathcal{P}_k(\pi) \eqdef \big( \pc{\tau}{\pi} \big)_{\tau \in \mathbb{S}_k} \in \RR_{\ge 0}^{\mathbb{S}_k}
    \]
This is a vector of $|\mathbb{S}_k|=k!$ non-negative integers that sum to $\binom{n}{k}$.
\end{definition}

This brings us to our main object of study.

\begin{definition}[$k$-balanced permutation] 
We say that a permutation $\pi \in \Sn$ is $k$-balanced for some $1 \leq k \le n$ if:
    \[
        \forall \tau \in \mathbb{S}_k:\ \pc{\tau}{\pi} = \frac{\binom{n}{k}}{k!}
    \]
\end{definition}

\subsection{Basic Observations on Balanced Permutations}

We first observe that
the $k$-profile of a permutation uniquely determines its $r$-profile
for every $r<k$, and in particular
every $k$-balanced permutation is also $r$-balanced.  

\begin{proposition}[Downward induction of pattern distribution]
\label{prop:down_ind_k_prof}
Let $n>k>r$ be positive integers. If $\pi \in \Sn$ and $\tau \in \mathbb{S}_{r}$, then
\begin{equation}\label{eq:induce}
        \binom{n-r}{k-r} \cdot \pc{\tau}{\pi} = \sum_{\sigma \in \mathbb{S}_k} \pc{\tau}{\sigma} \cdot \pc{\sigma}{\pi}
\end{equation}
\end{proposition}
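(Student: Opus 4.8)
The plan is to prove \eqref{eq:induce} by a double counting argument. Consider the set
\[
    \mathcal{X} \eqdef \Big\{ (S,T) \ :\ S \subseteq T \subseteq [n],\ |S| = r,\ |T| = k,\ \pi(S) \cong \tau \Big\},
\]
and count $|\mathcal{X}|$ in two ways, first grouping by the inner set $S$ and then by the outer set $T$.

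Grouping by $S$: by \Cref{defn:order_isom} there are exactly $\pc{\tau}{\pi}$ choices of an $r$-subset $S$ with $\pi(S)\cong\tau$, and for each such $S$, a set $T$ with $S \subseteq T \subseteq [n]$ and $|T| = k$ is obtained by adjoining $k-r$ of the remaining $n-r$ indices, so there are $\binom{n-r}{k-r}$ valid completions. This yields the left-hand side, $\binom{n-r}{k-r}\cdot \pc{\tau}{\pi}$.

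Grouping by $T$: for each $k$-subset $T \subseteq [n]$, let $\sigma \in \mathbb{S}_k$ be the unique pattern with $\pi(T) \cong \sigma$; the number of $T$ producing a given $\sigma$ is exactly $\pc{\sigma}{\pi}$. The key point — which is essentially transitivity of order-isomorphism — is that for an $r$-subset $S \subseteq T$, whether $\pi(S)\cong\tau$ depends only on $\sigma$ and on the positions occupied by $S$ inside $T$, not on $\pi$ or $T$ themselves. Concretely, writing $T = \{t_1 < \dots < t_k\}$ and identifying the $r$-subset $S = \{t_{i_1} < \dots < t_{i_r}\}$ with $\{i_1 < \dots < i_r\}\in\binom{[k]}{r}$, one has $\pi(s_a) < \pi(s_b) \iff \sigma(i_a) < \sigma(i_b)$, hence $\pi(S)\cong\tau$ if and only if $\sigma(\{i_1,\dots,i_r\})\cong\tau$. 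Since this correspondence $S \mapsto \{i_1,\dots,i_r\}$ is a bijection from $\binom{T}{r}$ to $\binom{[k]}{r}$ that preserves the induced pattern, the number of $S \subseteq T$ with $\pi(S)\cong\tau$ is precisely $\pc{\tau}{\sigma}$. Summing over all $T$, grouped by their pattern $\sigma$, gives the right-hand side, $\sum_{\sigma \in \mathbb{S}_k}\pc{\tau}{\sigma}\cdot\pc{\sigma}{\pi}$. Equating the two counts of $|\mathcal{X}|$ proves \eqref{eq:induce}.

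The only real content is the compositional property of order-isomorphism invoked in the second count, and I would isolate it as a one-line preliminary claim proved by unwinding \Cref{defn:order_isom}. Beyond that the argument is pure bookkeeping; the one thing to state carefully is that the monotone relabelling $T \to [k]$ induces a pattern-preserving bijection between $r$-subsets of $T$ and $r$-subsets of $[k]$, so that counting the latter via $\pc{\tau}{\sigma}$ is legitimate. No case analysis or estimates are needed. Finally, the in-particular clause (every $k$-balanced $\pi$ is $r$-balanced) is immediate: if $\pc{\sigma}{\pi} = \binom{n}{k}/k!$ for all $\sigma\in\mathbb{S}_k$, then the right-hand side becomes $\tfrac{\binom{n}{k}}{k!}\sum_{\sigma}\pc{\tau}{\sigma} = \tfrac{\binom{n}{k}}{k!}\binom{k}{r}$ (since the $\pc{\tau}{\sigma}$ over $\sigma\in\mathbb{S}_k$ sum to $\binom{k}{r}$), and dividing by $\binom{n-r}{k-r}$ gives $\pc{\tau}{\pi} = \binom{n}{r}/r!$, independent of $\tau$.
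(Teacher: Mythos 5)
Your double-counting argument over pairs $(S,T)$ with $S \subseteq T$, $|S|=r$, $|T|=k$, $\pi(S) \cong \tau$ is exactly the paper's proof (the paper calls the sets $B \subset A$), and your fleshing out of the transitivity of order-isomorphism under monotone relabelling is a fine way to justify the step the paper treats as immediate. The proof of the identity itself is correct.

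However, there is an arithmetic error in the addendum you tack on at the end (which is not part of this proposition's statement, but you label it the ``in-particular clause''). You claim $\sum_{\sigma \in \mathbb{S}_k} \pc{\tau}{\sigma} = \binom{k}{r}$, but this counts pairs $(\sigma,S)$ with $\sigma \in \mathbb{S}_k$, $S \in \binom{[k]}{r}$, and $\sigma(S) \cong \tau$: for each of the $\binom{k}{r}$ choices of $S$, a fraction $1/r!$ of all $\sigma \in \mathbb{S}_k$ satisfy $\sigma(S) \cong \tau$, so the correct value is $\binom{k}{r} \cdot k!/r!$ (for $r = k-1$ this gives $k^2$, matching the computation in \Cref{cor:k_bal_implies_lt_k_bal}). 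With your value the right-hand side would work out to $\binom{n}{r}/k!$ rather than $\binom{n}{r}/r!$; with the corrected sum, using $\binom{n}{k}\binom{k}{r} = \binom{n}{r}\binom{n-r}{k-r}$, the computation does indeed yield $\pc{\tau}{\pi} = \binom{n}{r}/r!$.
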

\begin{proof}
Consider the pairs $B\subset A\subset [n]$ where $|B|=r$, $|A|=k$ and $\pi(B) \cong \tau$.
The r.h.s.\ expression is obtained by
grouping together in this count all sets $A$ with $\pi(A) \cong \sigma$,
for every permutation $\sigma\in \mathbb{S}_{k}$.
For the l.h.s.,\ note that for every $B\subset [n]$ with $\pi(B) \cong \tau$,
there are exactly $\binom{n-r}{k-r}$ sets $A$ with $B\subset A\subset [n]$.
\end{proof}

\begin{corollary}[$k$-balanced implies $(<k)$-balanced]
    \label{cor:k_bal_implies_lt_k_bal}
For $n>k>r$, every $k$-balanced permutation $\pi \in \Sn$ is also $r$-balanced.
\end{corollary}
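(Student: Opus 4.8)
The plan is to derive the corollary directly from \Cref{prop:down_ind_k_prof} (equation \eqref{eq:induce}), by showing that when $\pi$ is $k$-balanced, the right-hand side of \eqref{eq:induce} does not depend on the choice of $\tau\in\mathbb{S}_r$. First I would substitute the $k$-balanced hypothesis $\pc{\sigma}{\pi}=\binom{n}{k}/k!$ into \eqref{eq:induce}, which pulls this common factor out of the sum and leaves
\[
    \binom{n-r}{k-r}\cdot\pc{\tau}{\pi}\;=\;\frac{\binom{n}{k}}{k!}\sum_{\sigma\in\mathbb{S}_k}\pc{\tau}{\sigma}.
\]
So everything reduces to the purely combinatorial claim that $\sum_{\sigma\in\mathbb{S}_k}\pc{\tau}{\sigma}$ is the same for every $\tau\in\mathbb{S}_r$.

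Next I would prove that claim by a direct count. Rewrite the sum as $\sum_{S\in\binom{[k]}{r}}\big|\{\sigma\in\mathbb{S}_k:\sigma(S)\cong\tau\}\big|$ and fix one $r$-subset $S=\{s_1<\dots<s_r\}$. Building a permutation $\sigma$ with $\sigma(S)\cong\tau$ amounts to: choosing the $r$-element value set $\{\sigma(s_1),\dots,\sigma(s_r)\}\subseteq[k]$ (there are $\binom{k}{r}$ choices); then distributing those $r$ values over the positions $s_1,\dots,s_r$ — which is forced, as exactly one assignment of a given value set to $s_1<\dots<s_r$ realizes the pattern $\tau$; and finally placing the remaining $k-r$ values on the remaining $k-r$ positions in any order ($(k-r)!$ ways). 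Hence $\big|\{\sigma:\sigma(S)\cong\tau\}\big|=\binom{k}{r}(k-r)!=k!/r!$ for every $S$ and every $\tau$, so $\sum_{\sigma\in\mathbb{S}_k}\pc{\tau}{\sigma}=\binom{k}{r}\cdot k!/r!$, independent of $\tau$.

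Combining the two steps, $\pc{\tau}{\pi}$ is independent of $\tau\in\mathbb{S}_r$; since $\sum_{\tau\in\mathbb{S}_r}\pc{\tau}{\pi}=\binom{n}{r}$ by \Cref{defn:perm_profile}, each term must equal $\binom{n}{r}/r!$, i.e.\ $\pi$ is $r$-balanced. (Alternatively one can plug the explicit value of the sum back in and simplify via $\binom{n}{k}\binom{k}{r}=\binom{n}{r}\binom{n-r}{k-r}$ to land on the same value; and it would in fact suffice to treat $r=k-1$ and iterate.) I do not expect a genuine obstacle — the one nontrivial ingredient is the equal-sizes count of pattern classes inside $\mathbb{S}_k$, and that is exactly the step to state carefully.
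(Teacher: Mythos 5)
Your proof is correct, and it takes a genuinely different (and somewhat more self-contained) route than the paper's. The paper first reduces to $r=k-1$ by induction and then evaluates $\sum_{\sigma\in\mathbb{S}_k}\pc{\tau}{\sigma}=k^2$ via an insertion argument: think of $\tau$ as an ordering of $[k-1]$, and an occurrence of $\tau$ in $\sigma\in\mathbb{S}_k$ is obtained by inserting one of the $k$ values $j-\tfrac12$ ($j\in[k]$) into one of $k$ slots. You instead compute the sum for \emph{general} $r<k$: swapping the order of summation as $\sum_{S\in\binom{[k]}{r}}\big|\{\sigma:\sigma(S)\cong\tau\}\big|$ and observing that $\big|\{\sigma:\sigma(S)\cong\tau\}\big|=\binom{k}{r}(k-r)!=k!/r!$ for every $S$ and every $\tau$, you get $\sum_{\sigma\in\mathbb{S}_k}\pc{\tau}{\sigma}=\binom{k}{r}\cdot k!/r!$, which specializes to $k^2$ when $r=k-1$. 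This avoids the inductive reduction entirely and yields an explicit, $\tau$-independent value of the sum for every $r$; what you pay for is a slightly longer count, whereas the paper's insertion picture is quicker (but tied to $r=k-1$). Both of your suggested closings are fine: the averaging argument (each $\pc{\tau}{\pi}$ is equal, and they sum to $\binom{n}{r}$) or the direct simplification via $\binom{n}{k}\binom{k}{r}=\binom{n}{r}\binom{n-r}{k-r}$ both land on $\pc{\tau}{\pi}=\binom{n}{r}/r!$.
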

\begin{proof}
By a simple inductive argument it suffices to consider the case $r=k-1$. So,
fix some $\tau\in \mathbb{S}_{k-1}$ and notice that the sum 
$\sum_{\sigma \in \mathbb{S}_k} \pc{\tau}{\sigma}$
has nothing to do with $\pi$. To evaluate this sum, think of $\tau$
as an ordering of the integers in $[k-1]$, and view an occurrence of $\tau$
in some $\sigma \in \mathbb{S}_k$ as that ordering in which exactly one of the numbers
$\{j-\frac 12 : j=1,\ldots,k\}$ is inserted somewhere. This perspective yields 
$\sum_{\sigma \in \mathbb{S}_k} \pc{\tau}{\sigma}=k^2$, as there are $k$ choices
for $j$ as above and $k$ spots at which $j-\frac 12$ can be inserted.
If $\pi$ is $k$-balanced, then $\pc{\sigma}{\pi}=\binom{n}{k}/{k!}$,
and as claimed, $\pc{\tau}{\pi}=\binom{n}{k-1}/{(k-1)!}$ by
 Equation (\ref{eq:induce}).
\end{proof}

\Cref{cor:k_bal_implies_lt_k_bal}, yields the following \emph{divisibility conditions}.

\begin{corollary}[divisibility conditions for $k$-balanced permutations]
    \label{cor:div_conds}
    If $\pi \in \Sn$ is $k$-balanced for some $1 \leq k \le n$, then $r! \mid \binom{n}{r}$ for all $1 \leq r \le k$.
\end{corollary}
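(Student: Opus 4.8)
The plan is to derive the claim directly from \Cref{cor:k_bal_implies_lt_k_bal} together with the basic counting identity in \Cref{defn:perm_profile}. First I would fix $1 \le r \le k$ and observe that, by \Cref{cor:k_bal_implies_lt_k_bal} (applied with the roles $k \gets k$, $r \gets r$, noting the degenerate case $r = k$ is trivial), the permutation $\pi$ is $r$-balanced. By the definition of $r$-balanced, this means that for every $\tau \in \mathbb{S}_r$ we have $\pc{\tau}{\pi} = \binom{n}{r}/r!$.

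The second step is to note that $\pc{\tau}{\pi}$ is, by \Cref{defn:order_isom}, the cardinality of a set of subsets of $[n]$, hence a non-negative \emph{integer}. Therefore $\binom{n}{r}/r!$ is an integer, which is precisely the assertion $r! \mid \binom{n}{r}$. Running this over all $1 \le r \le k$ completes the argument. One should also remark that the case $r = 1$ is vacuous (it reads $1 \mid n$) and that the $r$-profile has at least one entry, so the equality $\pc{\tau}{\pi} = \binom{n}{r}/r!$ is genuinely being invoked for some $\tau$; there is nothing to check beyond that.

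There is essentially no obstacle here: the only subtlety is making sure one is entitled to conclude $r$-balancedness for \emph{every} $r \le k$ and not merely $r = k-1$, but this is exactly what \Cref{cor:k_bal_implies_lt_k_bal} gives (via its own inductive reduction to the $r = k-1$ case). The integrality of occurrence counts does the rest. In fact the same reasoning shows that whenever a permutation is $k$-balanced, the quantities $\binom{n}{r}/r!$ for $r \le k$ are simultaneously integers, so these divisibility conditions are genuinely \emph{necessary}; the content of \Cref{thm_intro_3balanced} is that for $k \le 3$ they are also sufficient.
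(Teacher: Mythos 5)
Your proof is correct and follows exactly the route the paper intends: the paper leaves \Cref{cor:div_conds} without an explicit proof because it is the immediate consequence of \Cref{cor:k_bal_implies_lt_k_bal} together with the integrality of pattern counts that you spell out. Nothing to add.
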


\section{\texorpdfstring{$k$}{k}-Balanced Permutations for \texorpdfstring{$k\le3$}{k<=3}}
\label{section:3balanced_construction}

In this section we show that for $k=2$ and $k=3$, the  divisibility conditions 
of \Cref{cor:div_conds} are not only necessary, but also \emph{sufficient}.
Namely, we show that a $k$-balanced permutation on $n$ elements 
exists, whenever $n$ satisfies that arithmetic condition.
As a warmup, we first describe the case $k=2$.

\subsection{\texorpdfstring{$2$}{2}-Balanced Family}
\label{subsect:2bal_construction}

The following is one way (of many) to construct such an infinite family.

\begin{proposition}[$2$-balanced family]
There exists a \twob permutation in $\Sn$ if and only if 
\mbox{$n\equiv 0 \pmod 4$} or $n\equiv 1 \pmod 4$.
\end{proposition}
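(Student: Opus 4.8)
The plan is to reduce the whole statement to a count of inversions. Since every pair of positions contributes to exactly one of $\pc{12}{\pi}$ or $\pc{21}{\pi}$, we always have $\pc{12}{\pi} + \pc{21}{\pi} = \binom n 2$; hence $\pi \in \Sn$ is \twob if and only if $\pc{21}{\pi} = \binom n 2 / 2$, and $\pc{21}{\pi}$ is precisely the number of inversions of $\pi$. So the proposition is equivalent to the assertion that $\mathbb{S}_n$ contains a permutation with exactly $\binom n 2/2$ inversions if and only if $n \equiv 0$ or $1 \pmod 4$.

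For the ``only if'' direction I would apply \Cref{cor:div_conds} with $r = 2$ (or simply note that being \twob forces $\binom n 2/2 \in \NN$): this gives $2 \mid \binom n 2 = \tfrac{n(n-1)}{2}$, i.e.\ $4 \mid n(n-1)$; as $\gcd(n,n-1)=1$, this holds exactly when $4\mid n$ or $4 \mid n-1$, that is $n \equiv 0,1 \pmod 4$. The same arithmetic shows conversely that under these congruences $\binom n 2/2 = \tfrac{n(n-1)}{4}$ is a non-negative integer, and it clearly lies in $\{0,1,\dots,\binom n 2\}$.

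For the ``if'' direction it then suffices to realise every integer $M$ with $0 \le M \le \binom n 2$ as the inversion count of some permutation in $\mathbb{S}_n$, and to take $M = \binom n 2 / 2$. This is classical: the identity has $0$ inversions, the reverse permutation $(n, n-1, \dots, 1)$ has $\binom n 2$, and swapping two adjacent entries changes the number of inversions by exactly $1$, so as we sort the reverse permutation into the identity one adjacent swap at a time we pass through every value in between. For a concrete infinite family one can instead write the permutation directly from its Lehmer code: greedily set the leading coordinates of the code to their maximal allowed values until the remaining ``budget'' $M$ is exhausted; the resulting permutation is a decreasing block, followed by a single correction value, followed by an increasing run of the remaining (larger) values, and it has exactly $M$ inversions.

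I do not expect a genuine obstacle here: the content is the identity $\pc{12}{\pi}+\pc{21}{\pi} = \binom n 2$ together with the elementary fact that every inversion count in $\{0,\dots,\binom n 2\}$ is attained, plus the bookkeeping that $4\mid n(n-1)$ is the same as $n \equiv 0, 1 \pmod 4$. The only mild subtlety, if one insists on an explicit closed-form family in the spirit of the $k=3$ construction that follows, is that \emph{layered} permutations alone do not attain every inversion count (for instance none attains $14$ in $\mathbb{S}_8$), so one must combine a base permutation with roughly $\binom n 2 /2$ inversions with a small bounded adjustment, or use the Lehmer-code recipe above.
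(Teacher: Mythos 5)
Your argument is essentially the paper's own: both observe that $\pc{12}{\pi}+\pc{21}{\pi}=\binom n2$, so the task reduces to realising $\binom n2/2$ as an inversion count, and both do so by bubble-sorting from the reverse permutation to the identity via adjacent swaps, each of which changes the inversion count by exactly $1$, combined with the arithmetic that $\binom n2$ is even iff $n\equiv 0,1\pmod 4$. The Lehmer-code alternative you sketch is a nice explicit variant but not needed.
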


\begin{proof}
For any permutation $\pi \in \Sn$, swapping a pair of \emph{adjacent} elements in $\pi$ only changes that pair from ascending to descending or vice versa, leaving all other pairs unaffected. So, this increments one of $\pc{12}{\pi}$, $\pc{21}{\pi}$ by one, and decrements the other.

The identity permutation $\pi=(1,2,\dots, n) \in \Sn$ clearly satisfies
$\pc{12}{\pi}=\binom{n}{2}$ and $\pc{21}{\pi}=0$. At the other extreme,
the descending permutation $\tau = (n, \dots, 2, 1) \in \Sn$
satisfies $\pc{21}{\tau}=\binom{n}{2}$ and $\pc{12}{\tau}=0$.
It is possible to move from $\pi$ to $\tau$ by
a sequence of adjacent swaps (e.g., by ``bubble-sorting''). 
By a simple continuity argument, if $\binom{n}{2}$ is even, then there exists an intermediate permutation $\sigma$ satisfying $\pc{12}{\sigma} = \pc{21}{\sigma} = \binom{n}{2} / 2$. Finally, $\binom{n}{2}$ is even exactly when $n$ is $0$ or $1 \pmod 4$.
\end{proof}

The case $k=2$ is deceptively simple: the plot thickens for larger $k$, and
it is far from the truth that 
any collection of $k!$ non-negative integers summing up to $\binom{n}{k}$ is a $k$-profile of some permutation $\pi \in \Sn$. E.g, by the \ESZ Theorem \cite{erdos1935combinatorial}, we cannot have $\pc{123}{\pi}=\pc{321}{\pi}=0$ whenever $n\geq 5$. 

\subsection{\texorpdfstring{$3$}{3}-Balanced Family}
\label{subsect:3bal_construction}

To construct a \threeb family, we take a different approach. Consider the action $D_4 \acts \Sn$ of the dihedral group on $\Sn$, where we view
any permutation $\pi \in \Sn$ as the set of points $\{(i, \pi(i))\}$ in $\RR^2$, and act on the square $[1,n]^2$ in the standard way. This group action has the useful property that it respects pattern counts, in the following sense:

\begin{lemma}[pattern counts under $D_4 \acts \Sn$]
    \label{lemma:pc_D4}
    Let $\pi\in\Sn$ and $\tau\in\mathbb{S}_k$ be two permutations, and let $g \in D_4$. Then,  $\pc{\tau}{\pi} = \pc{\mathnormal{g.\tau}}{g.\pi}$.
\end{lemma}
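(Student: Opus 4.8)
The plan is to prove the statement for a set of generators of the dihedral group $D_4$, and then extend to all of $D_4$ by the multiplicativity of the group action. Recall that $D_4$ acting on the square $[1,n]^2$ is generated by two elements: a rotation by $90^\circ$ (call it $\rho$) and a reflection (say, the reflection $h$ across the vertical axis $x = (n+1)/2$). Since every $g \in D_4$ is a product of copies of $\rho$ and $h$, and since $\pc{\tau}{\pi} = \pc{g.\tau}{g.\pi}$ for all $\tau, \pi$ is preserved under composition of such maps (if it holds for $g_1$ and $g_2$ it holds for $g_1 g_2$, applying the identity twice), it suffices to verify the claim when $g$ is one of these two generators.

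\textbf{The two generator cases.} First consider the reflection $h$ across the vertical axis, which sends the point set $\{(i,\pi(i))\}$ to $\{(n+1-i, \pi(i))\}$; in one-line notation $h.\pi = (\pi(n), \pi(n-1), \dots, \pi(1))$, the left-right reversal. I would show directly that $S = \{s_1 < \dots < s_k\}$ is an occurrence of $\tau$ in $\pi$ if and only if the mirrored index set $S' = \{n+1-s_k < \dots < n+1-s_1\}$ is an occurrence of $h.\tau$ in $h.\pi$: this is just relabelling the columns, reading them right-to-left, and the $y$-values (hence all order relations among them) are untouched, so the bijection $S \mapsto S'$ matches occurrences of $\tau$ in $\pi$ with occurrences of $h.\tau$ in $h.\pi$ one-to-one. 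Next consider the $90^\circ$ rotation $\rho$, which sends $(i, \pi(i))$ to $(\pi(i), n+1-i)$, i.e. $\rho.\pi = \pi^{-1}$ up to a reflection — concretely $(\rho.\pi)(j) = n + 1 - \pi^{-1}(j)$. Here an occurrence $S$ of $\tau$ in $\pi$, viewed as a $k$-point subconfiguration of the plane, is mapped rigidly to a $k$-point subconfiguration of $\rho.\pi$ that is order-isomorphic to $\rho.\tau$ by the very same rotation applied to $\tau$'s defining configuration; again this is a bijection between occurrences, giving $\pc{\tau}{\pi} = \pc{\rho.\tau}{\rho.\pi}$.

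\textbf{The clean unified argument.} Rather than juggling coordinate formulas, I would actually prefer to phrase the whole proof geometrically from the start, which handles all of $D_4$ at once and is less error-prone. View $\pi$ as its point set $\mathcal{A}_\pi = \{(i,\pi(i)) : i \in [n]\} \subset [1,n]^2$, and similarly $\mathcal{A}_\tau \subset [1,k]^2$. An occurrence of $\tau$ in $\pi$ is precisely a $k$-subset $T \subseteq \mathcal{A}_\pi$ that is \emph{order-isomorphic} to $\mathcal{A}_\tau$, meaning there is an orientation-preserving rescaling of each axis carrying $T$ onto $\mathcal{A}_\tau$ — equivalently, the relative left-right order and the relative bottom-top order of the points agree. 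Now any $g \in D_4$ is an isometry of the square that maps axis-parallel lines to axis-parallel lines, so it preserves the property "no two points share a coordinate" and, crucially, maps order-isomorphism classes to order-isomorphism classes: $T$ is order-isomorphic to $\mathcal{A}_\tau$ iff $g(T)$ is order-isomorphic to $g(\mathcal{A}_\tau) = \mathcal{A}_{g.\tau}$. Since $g$ restricted to $\mathcal{A}_\pi$ is a bijection onto $\mathcal{A}_{g.\pi}$, it induces a bijection between $k$-subsets of $\mathcal{A}_\pi$ order-isomorphic to $\tau$ and $k$-subsets of $\mathcal{A}_{g.\pi}$ order-isomorphic to $g.\tau$. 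Counting both sides gives $\pc{\tau}{\pi} = \pc{g.\tau}{g.\pi}$.

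\textbf{Main obstacle.} There is no deep obstacle here — the result is essentially a bookkeeping statement. The one point requiring genuine care is the claim that $g$ "maps order-isomorphism classes to order-isomorphism classes," i.e. that $g(\mathcal{A}_\tau) \cong g.\tau$ and that order-isomorphism is respected. For a reflection this is immediate; for the $90^\circ$ rotation one must check that swapping the roles of the two coordinates and reversing one of them is exactly the operation that realizes $\rho.\tau$ as a permutation — in other words, that the definition of the $D_4$-action on $\Sn$ (via point sets) is compatible with the definition via one-line notation. I would make this explicit once (e.g. $\rho.\pi$ has one-line notation obtained by the rule $(\rho.\pi)(j) = n+1-\pi^{-1}(j)$, and check a $2$-element example to fix conventions), after which the rest is a routine bijection argument.
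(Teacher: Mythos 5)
Your ``clean unified argument'' is exactly the paper's proof: view an occurrence as a $k$-point subset of $\{(i,\pi(i))\}$, observe that $g$ maps such subsets rigidly to subsets of $g.\pi$'s point set while preserving the order-isomorphism type (sending $\tau$ to $g.\tau$), and conclude by counting the resulting bijection. The generator-by-generator preamble is a correct but unnecessary detour; the geometric argument you settle on is the intended one.
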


\begin{proof}
    Any occurrence of $\tau$ in $\pi$ is associated with a set of points in $[1,n]^2$ where $\tau$ appears. These points are rigidly mapped by $g$ to points at which $g.\tau$ appears, therefore  $\pc{\tau}{\pi} = \pc{\mathnormal{g.\tau}}{g.\pi}$.
\end{proof}

Consequently, if the permutation $\pi \in \mathbb{S}_n$ is $3$-balanced, then so are
all the permutations in $\pi$'s orbit under the action of $D_4$. This orbit may include at most $|D_4| = 8$ permutations. Conversely, for $n > 1$, the orbit \emph{must} include at least two permutations, since no permutation is identical to its reflections about the horizontal and vertical axes, respectively (they agree on no more than one point).

For any element $g \in D_4$ and permutation $\pi \in \Sn$, we say that $\pi$ is \emph{$g$-invariant} if $g.\pi = \pi$ (i.e., $g$ is in the stabilizer of $\pi$). As noted, clearly no permutation is invariant to the involutions of the reflections about either axis. However, \emph{rotation-invariant} permutations do exist. In other words, $r.\pi = \pi$ where $r \in D_4$ is the $90^\circ$-rotation of the square (therefore, $g.\pi = \pi$ for all $g\in \langle r \rangle$). Rotation-invariant permutations are simply characterised, as follows.

\begin{proposition}[characterisation of rotation-invariant permutations]
    \label{prop:char_rotation_inv}
    Let $n>1$ be even.\footnote{For odd $n$, any rotation-invariant permutation \emph{must} include the point at the centre. The permutation induced on the remaining rows and columns is rotation-invariant, to which \Cref{prop:char_rotation_inv} now applies.} Then,
    \begin{enumerate}
        \item There exists a rotation-invariant $\pi \in \Sn$ if and only if $n=4m$, for some natural $m$. 
        \item Let $A \sqcup B = [2m]$ with $|A|=|B|=m$, and let $\sigma:A \to B$ be a bijection between $A$ and $B$. To every such $A, B$ and $\sigma$ there corresponds a rotation-invariant permutation in $\mathbb{S}_{4m}$. All rotation-invariant permutations in $\mathbb{S}_{4m}$ are generated in this way. 
    \end{enumerate}
\end{proposition}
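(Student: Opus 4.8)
The plan is to translate rotation-invariance into a purely algebraic statement about $\pi$ as an element of $\Sn$, and then read off both parts from the cycle structure. Placing the centre of $[1,n]^2$ at $\tfrac{n+1}{2}(1,1)$, the $90^\circ$ rotation $r$ sends a point $(x,y)$ to $(n+1-y,\,x)$, so the point set $\{(i,\pi(i))\}$ is $r$-invariant exactly when $(n+1-\pi(i),\,i)$ is again one of its points, i.e. $\pi\bigl(n+1-\pi(i)\bigr)=i$ for all $i$. Letting $w\in\Sn$ be the order-reversing involution $w(i)=n+1-i$, this says $\pi\,w\,\pi=\mathrm{id}$, equivalently $\pi^{2}=w$. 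Thus I would state up front: the rotation-invariant permutations in $\Sn$ are precisely the square roots of $w$, and prove the proposition in that language.

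For part 1, I would argue from cycle type. Since $n$ is even, $w$ is a fixed-point-free involution, namely the product of the $n/2$ disjoint transpositions $\{i,\,n+1-i\}$. Now inspect $\pi^{2}$ cycle by cycle: an $\ell$-cycle of $\pi$ with $\ell$ odd contributes a single $\ell$-cycle to $\pi^{2}$, while a $2\ell$-cycle contributes two $\ell$-cycles; fixed points stay fixed. For $\pi^{2}$ to be a product of transpositions with no fixed points, every cycle of $\pi$ must therefore have length exactly $4$, so $4\mid n$. This gives the ``only if'' direction. When $n=4m$, the product of the $4$-cycles $(j,\ m+j,\ 4m+1-j,\ 3m+1-j)$ for $j=1,\dots,m$ squares to $w$ (each such cycle squares to $(j,\,4m+1-j)(m+j,\,3m+1-j)$, and these range over all $w$-pairs), proving existence.

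For part 2, write $n=4m$ and decompose a rotation-invariant $\pi$ into $m$ disjoint $4$-cycles $(a\,b\,c\,d)$. From $(a\,b\,c\,d)^{2}=(a\,c)(b\,d)=w$-pairs we get $c=n+1-a$ and $d=n+1-b$, and exactly one element of each of $\{a,c\}$ and $\{b,d\}$ lies in $[2m]$. I would then exhibit the correspondence in both directions. Given $\pi$, set $A=\{i\in[2m]:\pi(i)\in[2m]\}$, $B=[2m]\setminus A$, and $\sigma=\pi|_{A}$; running through one $4$-cycle shows each cycle contributes one element to $A$ and one to $B$ with $\sigma$ carrying the former to the latter, so $|A|=|B|=m$ and $\sigma\colon A\to B$ is a bijection. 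Conversely, given $A\sqcup B=[2m]$ with $|A|=|B|=m$ and a bijection $\sigma\colon A\to B$, define $\pi$ as the product over $\alpha\in A$ of the $4$-cycles $(\alpha,\ \sigma(\alpha),\ n+1-\alpha,\ n+1-\sigma(\alpha))$; using $w(A)\sqcup w(B)=\{2m+1,\dots,4m\}$ one checks these cycles are pairwise disjoint and exhaust $[4m]$, and that $\pi^{2}=w$. Finally I would observe the two assignments are mutually inverse, which is exactly the assertion that every rotation-invariant permutation arises this way.

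The genuinely delicate point is in part 2: the cyclic order of a $4$-cycle could appear to be an extra binary choice layered on top of $\sigma$, since $(a\,b\,c\,d)$ and its inverse $(a\,d\,c\,b)$ have the same square. The resolution, which I expect to be the crux of the write-up, is that defining $A$ to be the ``$\pi$ maps forward into $[2m]$'' half of each cycle canonically orients the $4$-cycle, so $(A,B,\sigma)$ records exactly the data of $\pi$ — no double counting, no omitted permutations — making the parametrization a true bijection rather than merely a surjection. The remaining verifications (disjointness, coverage, $\pi^2=w$) are routine bookkeeping.
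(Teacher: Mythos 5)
Your proof is correct, and it takes a genuinely different route from the paper's. You recast rotation-invariance purely algebraically as the identity $\pi^2 = w$, where $w(i)=n+1-i$ is the reversal involution, and then read everything off cycle structure: squaring a cycle of length $\ell$ preserves it when $\ell$ is odd and splits a $2\ell$-cycle into two $\ell$-cycles, so $\pi^2$ can be a fixed-point-free involution only when every cycle of $\pi$ has length exactly $4$, forcing $4 \mid n$; part 2 then parametrizes the $4$-cycles by $(A,B,\sigma)$, with the orientation ambiguity of each $4$-cycle (both $(a\,b\,c\,d)$ and $(a\,d\,c\,b)$ square to $(a\,c)(b\,d)$) resolved by the convention that $A$ is the ``maps forward into $[2m]$'' half. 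The paper instead stays geometric: it treats $\pi$ as a point set in $[n]^2$, observes that rotation-invariance means the set is a union of full $r$-orbits, deduces $4 \mid n$ by noting that $r$ cyclically permutes the four quadrants (so each must hold $n/4$ points), and gets the parametrization by choosing an orbit representative $(i,\sigma(i))$ in the lower-left quadrant for each $i \in A$. Your algebraic framing is arguably cleaner as a standalone characterisation — ``rotation-invariant $\iff$ square root of $w$'' is a crisp statement, and you correctly identify where care is needed to get a bijection rather than just a surjection. The paper's geometric framing has the advantage of meshing directly with how the rest of Section 3 then counts $3$-patterns by regions of the grid, so the orbit picture does additional work downstream.
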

\begin{proof}
    Consider the action $\langle r \rangle \acts [4m]^2$ of $r \in D_4$, the $90^{\circ}$-clockwise rotation. The orbit of a point $(x,y) \in [4m]^2$:
    \[
        O(x,y) = \{(x, y), (y, 4m-x+1), (4m-x+1, 4m-y+1), (4m-y+1, x)\}
    \]

    Let $A, B$ and $\sigma$ be as specified.
    For every $i \in A$ consider the orbit $O(i, \sigma(i))$ and its projections on the coordinate axes. Note that the four integers
$\{i, \sigma(i), 4m-i+1, 4m-\sigma(i)+1\}$ are all distinct. 
Consequently, \emph{all} $2m$ orbits, $\sqcup_{i \in A} O(i, \sigma(i)) \subseteq [4m]^2$ comprise a set of cardinality $4m$ with no repeated coordinates, and therefore define a permutation in $\mathbb{S}_{4m}$.
    
    Conversely, suppose $\pi\in\mathbb{S}_n$ is rotation-invariant where $n$ is even. The action $\langle r \rangle \acts [n]^2$ maps quadrants to quadrants, therefore they must each contain exactly $n/4$ points, and $n = 4m$ for some natural $m$. Let $A := \{ x : x,\pi(x) \in [2m] \}$ and $B := \{ y : \pi^{-1}(y),y \in [2m] \}$. By the previous argument, we have $\{ (i, \pi(i)) : i \in [4m] \} = \sqcup_{i \in A} O(i, \pi(i))$. As $|A| = m$, every orbit $O(i, \pi(i))$ must have cardinality four, and this holds if and only if $\pi$ is fixed-point-free. Consequently, the sets $A$ and $B$ must be disjoint. The proof now follows by fixing the bijection $\sigma: A \to B$, where $\sigma(i) \eqdef \pi(i)$ for all $i \in A$.
\end{proof}

Consider the orbits in $\mathbb{S}_3$ under the action of the $90^\circ$-rotation $r \in D_4$.

\begin{figure}[H]
    \centering
    \begin{tikzpicture}[scale=0.75]
	   \draw (0,-0.8) node[draw,circle, minimum size=0.5cm, inner sep=2pt] (a132) {$\mathtt{132}$};
	   \draw (-1.75,0) node[draw,circle, minimum size=0.5cm, inner sep=2pt] (a231) {$\mathtt{231}$};
	   \draw (0,0.8) node[draw,circle, minimum size=0.5cm, inner sep=2pt] (a213) {$\mathtt{213}$};
	   \draw (1.75,0) node[draw,circle, minimum size=0.5cm, inner sep=2pt] (a312) {$\mathtt{312}$};

	   \draw (5,0) node[draw,circle, minimum size=0.5cm, inner sep=2pt] (a123) {$\mathtt{123}$};
	   \draw (8,0) node[draw,circle, minimum size=0.5cm, inner sep=2pt] (a321) {$\mathtt{321}$};

	   \draw[-latex, line width=0.5pt] (a132) to [bend left] (a231);
	   \draw[-latex, line width=0.5pt] (a231) to [bend left] (a213);
	   \draw[-latex, line width=0.5pt] (a231) to [bend left] (a213);
	   \draw[-latex, line width=0.5pt] (a213) to [bend left] (a312);
	   \draw[-latex, line width=0.5pt] (a312) to [bend left] (a132);

	   \draw[-latex, line width=0.5pt] (a123) to [bend left] (a321);
	   \draw[-latex, line width=0.5pt] (a321) to [bend left] (a123);
    \end{tikzpicture}
    \caption{Orbits in $\mathbb{S}_3$ under the action $\langle r \rangle \acts \mathbb{S}_3$, where $r \in D_4$.}
    \label{fig:action_D4_S3}
\end{figure}
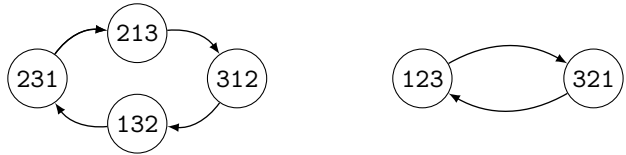
Referring to \Cref{fig:action_D4_S3} and \Cref{lemma:pc_D4}, yields the following useful fact regarding rotation-invariant permutations.

\begin{lemma} [$3$-profile of rotation-invariant permutations]
\label{lemma:big_rot_prof}
    If $\pi\in\Sn$ is rotation-invariant, then:
    \[
        \pc{123}{\pi}=\pc{321}{\pi}\text{ and }\pc{132}{\pi}=\pc{231}{\pi}=\pc{213}{\pi}=\pc{312}{\pi}
    \]
    In particular, $\pi$ is \threeb if and only if $\pc{123}{\pi}=\pc{132}{\pi}$.
\end{lemma}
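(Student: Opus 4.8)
The plan is to combine the rotation-invariance of $\pi$ with \Cref{lemma:pc_D4} to force the $3$-profile of $\pi$ to be constant on each orbit of the action $\langle r \rangle \acts \mathbb{S}_3$. First I would observe that $r.\pi = \pi$ implies $g.\pi = \pi$ for every $g \in \langle r \rangle$, so \Cref{lemma:pc_D4} gives $\pc{\tau}{\pi} = \pc{g.\tau}{g.\pi} = \pc{g.\tau}{\pi}$ for every $\tau \in \mathbb{S}_3$ and every $g \in \langle r \rangle$. Hence the map $\tau \mapsto \pc{\tau}{\pi}$ is invariant under the $\langle r \rangle$-action on $\mathbb{S}_3$.

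Next I would read off the orbits of this action from \Cref{fig:action_D4_S3}: the monotone patterns $\{\mathtt{123}, \mathtt{321}\}$ form one orbit of size two, while the remaining four patterns $\{\mathtt{132}, \mathtt{231}, \mathtt{213}, \mathtt{312}\}$ form a single orbit of size four. (If one prefers not to rely on the figure, a direct check of how the $90^\circ$ rotation permutes these six permutations — via the orbit formula in the proof of \Cref{prop:char_rotation_inv}, specialised to $n = 3$ — establishes the same fact.) Combined with the previous paragraph, this shows that the $3$-profile of $\pi$ is governed by just two quantities, $a \eqdef \pc{123}{\pi} = \pc{321}{\pi}$ and $b \eqdef \pc{132}{\pi} = \pc{231}{\pi} = \pc{213}{\pi} = \pc{312}{\pi}$, which is precisely the first assertion of the lemma.

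For the ``in particular'' clause, I would recall from \Cref{defn:perm_profile} that the six entries of $\mathcal{P}_3(\pi)$ sum to $\binom{n}{3}$, so $2a + 4b = \binom{n}{3}$. If $\pc{123}{\pi} = \pc{132}{\pi}$, i.e.\ $a = b$, then $6a = \binom{n}{3}$, so every entry of the $3$-profile equals $\binom{n}{3}/6$ and $\pi$ is \threeb; the converse is immediate from the definition of \threeb.

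I do not expect a genuine obstacle here, as the argument is essentially bookkeeping on orbits. The only points needing a little care are the verification of the orbit structure asserted in \Cref{fig:action_D4_S3}, and noting that the normalisation $2a + 4b = \binom{n}{3}$ is exactly the one imposed by \Cref{defn:perm_profile}.
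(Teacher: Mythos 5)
Your proposal is correct and follows exactly the route the paper implicitly intends: apply \Cref{lemma:pc_D4} with $g \in \langle r \rangle$ and $g.\pi = \pi$ to get constancy of $\pc{\cdot}{\pi}$ on each $\langle r\rangle$-orbit in $\mathbb{S}_3$, then read the two orbits off \Cref{fig:action_D4_S3}, and finish the ``in particular'' clause with the normalisation $2a + 4b = \binom{n}{3}$. The paper leaves this as an observation (``Referring to \Cref{fig:action_D4_S3} and \Cref{lemma:pc_D4}, yields...''); you have simply written out that same argument in full.
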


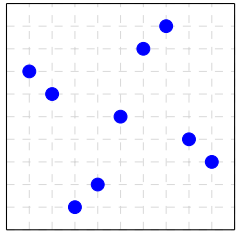
\begin{figure}[htb!]
    \centering
    \begin{tikzpicture}[scale=0.3]
        \draw[help lines, color=gray!30, dashed] (0,0) grid (10,10);
        \draw[thin](0,0)--(10,0) node[right]{};
        \draw[thin](0,0)--(0,10) node[above]{};
        \draw[thin](0,10)--(10,10) node[below]{};
        \draw[thin](10,0)--(10,10) node[left]{};

        \foreach \x/\y in {1/7,2/6,3/1,4/2,5/5,6/8,7/9,8/4,9/3} {
            \filldraw[blue] (\x,\y) circle (8pt) node[anchor=west]{};
        } 
    \end{tikzpicture}
    \caption{Plot of the \threeb permutation $\pi = \mathtt{761258943} \in \mathbb{S}_9$. By enumeration, the two shortest \threeb permutations are $\pi$ and its inverse (see also \cite{cooper2008symmetric}). Both are rotation-invariant.\protect\footnotemark }
    \label{fig:threebal1}
\end{figure}
\footnotetext{We note that for $n>9$, there appear to be many \threeb permutations which are \emph{not} rotation-invariant. These can be found, for instance, via a random-greedy search.}

Before we proceed to describe our construction, we note the arithmetic implications of \Cref{cor:div_conds} on any \threeb permutation.

\begin{lemma}[divisibility conditions for \threeb permutations]
    \label{lemma:div_cond_k_3}
    If $\pi\in\mathbb{S}_n$ is \threeb, then\\ $n\equiv 0$, $1$, $9$, $20$, $28$, or $29 \pmod {36}$.
\end{lemma}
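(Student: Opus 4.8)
The plan is to derive the congruence conditions on $n$ directly from the divisibility conditions of \Cref{cor:div_conds} applied to $r = 1, 2, 3$. The condition $1! \mid \binom{n}{1}$ is vacuous, so the content comes from $2! \mid \binom{n}{2}$ and $3! \mid \binom{n}{3}$. First I would rewrite these as $2 \mid \binom{n}{2} = \tfrac{n(n-1)}{2}$, i.e.\ $4 \mid n(n-1)$, and $6 \mid \binom{n}{3} = \tfrac{n(n-1)(n-2)}{6}$, i.e.\ $36 \mid n(n-1)(n-2)$. Since the modulus $36 = 4 \cdot 9$ factors into coprime parts, I would analyze the two conditions separately modulo $4$ and modulo $9$ and then recombine via the Chinese Remainder Theorem.

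For the mod-$4$ analysis: among $n$ and $n-1$, exactly one is even, so $4 \mid n(n-1)$ forces that even one to be divisible by $4$; hence $n \equiv 0 \pmod 4$ or $n \equiv 1 \pmod 4$. (The condition $36 \mid n(n-1)(n-2)$ also contributes a factor of $4$; since $n-2$ has the same parity as $n$, one checks it imposes nothing beyond $n \equiv 0$ or $1 \pmod 4$ — indeed if $n \equiv 2 \pmod 4$ then none of $n, n-1, n-2$ supplies the needed two factors of $2$ beyond what's already present, while $n \equiv 3 \pmod 4$ fails similarly.) For the mod-$9$ analysis: $9 \mid n(n-1)(n-2)$. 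Three consecutive integers contain exactly one multiple of $3$; for the product to be divisible by $9$, that multiple of $3$ must in fact be divisible by $9$. Thus we need one of $n$, $n-1$, $n-2$ to be $\equiv 0 \pmod 9$, giving $n \equiv 0$, $1$, or $2 \pmod 9$.

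Combining by CRT: $n \pmod{4} \in \{0, 1\}$ and $n \pmod 9 \in \{0, 1, 2\}$ gives $2 \times 3 = 6$ residues modulo $36$, which I would tabulate. Solving the six systems $(n \bmod 4, n \bmod 9)$ yields: $(0,0) \mapsto 0$, $(1,1) \mapsto 1$, $(0,9\text{-part }0)$… concretely $(1,0)\mapsto 9$, $(0,2)\mapsto 20$, $(0,1)\mapsto 28$, $(1,2)\mapsto 29$. So $n \equiv 0, 1, 9, 20, 28, 29 \pmod{36}$, exactly as claimed.

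The only real care needed — and the one place a quick argument could go wrong — is double-checking that the mod-$4$ condition coming from $36 \mid n(n-1)(n-2)$ is implied by, rather than stronger than, the condition from $4 \mid n(n-1)$; i.e.\ that $v_2(n(n-1)(n-2)) \ge 2$ precisely when $n \equiv 0$ or $1 \pmod 4$. This is a short case check on $n \bmod 4$: if $n \equiv 0$ then $4 \mid n$; if $n \equiv 1$ then $4 \mid n-1$; if $n \equiv 2$ then $v_2(n(n-1)(n-2)) = v_2(n) + v_2(n-2) = 1 + 1 = 2$, so actually this case also satisfies $4 \mid n(n-1)(n-2)$ — meaning I must be careful: the genuine obstruction at $n \equiv 2$ comes instead from the $2 \mid \binom{n}{2}$ condition, which is the $v_2 \ge 2$ requirement on $n(n-1)$ alone where $v_2(n(n-1)) = 1$. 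So the cleanest route is to use the $r=2$ condition for the mod-$4$ part and the $r=3$ condition for the mod-$9$ part, noting the $r=3$ condition's factor of $4$ adds nothing new. I would state this explicitly to avoid the pitfall.
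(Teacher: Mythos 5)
Your proposal is correct and is exactly the computation the paper leaves implicit: it states \Cref{lemma:div_cond_k_3} without proof as a direct consequence of \Cref{cor:div_conds}, i.e.\ extract $4 \mid n(n-1)$ from $2! \mid \binom{n}{2}$ and $9 \mid n(n-1)(n-2)$ from $3! \mid \binom{n}{3}$, then recombine by CRT, as you do. One minor arithmetic slip in your final self-check: for $n \equiv 2 \pmod 4$ you write $v_2(n-2) = 1$, but since $n-2 \equiv 0 \pmod 4$ we have $v_2(n-2) \ge 2$; this only strengthens your (correct) observation that the mod-$4$ part of the $r=3$ condition is slack, and the binding mod-$4$ constraint really does come from $r=2$.
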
 

\subsubsection{A Rotation-Invariant Construction}
\Cref{lemma:big_rot_prof} suggests that we seek rotation-invariant permutations, while \Cref{prop:char_rotation_inv} provides a recipe for constructing such a permutation in terms of a bipartition and a bijection. By \Cref{lemma:div_cond_k_3}, there is a \threeb permutation in $\Sn$ only if $n\equiv 0$ or $1 \pmod {4}$. For our construction, we fix the bipartition $A \sqcup B$ where $A = \{ m + 1, \dots, 2m \}$ and $B=\{1, \dots, m\}$. The planar diagram of $\pi$ in $\RR^2$ looks as follows.
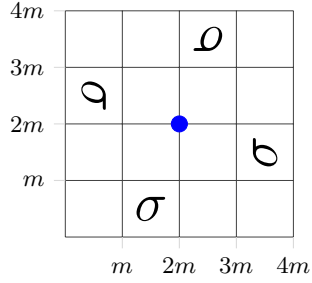
\begin{figure}[H]
    \centering
    \begin{tikzpicture}[scale=0.75]
        \draw[help lines, color=darkgray] (0,0) grid (4,4);

        \node[] at (0.5,2.5) {\huge{\rotatebox[origin=c]{-90}{$\sigma$}}};
		\node[] at (1.5,0.5) {\huge{\rotatebox[origin=c]{0}{$\sigma$}}};
		\node[] at (2.5,3.5) {\huge{\rotatebox[origin=c]{180}{$\sigma$}}};
		\node[] at (3.5,1.5) {\huge{\rotatebox[origin=c]{90}{$\sigma$}}};
        \filldraw[blue] (2,2) circle (4pt) {};

        \draw[color=gray!30] (1,0) -- (1,-0.2) node[below,color=black] {\small{$\phantom{1}m\phantom{1}$}};
        \draw[color=gray!30] (2,0) -- (2,-0.2) node[below,color=black] {\small{$2m$}};
        \draw[color=gray!30] (3,0) -- (3,-0.2) node[below,color=black] {\small{$3m$}};
        \draw[color=gray!30] (4,0) -- (4,-0.2) node[below,color=black] {\small{$4m$}};

        \draw[color=gray!30] (0,1) -- (-0.2,1) node[left,color=black] {\small{$m$}};
        \draw[color=gray!30] (0,2) -- (-0.2,2) node[left,color=black] {\small{$2m$}};
        \draw[color=gray!30] (0,3) -- (-0.2,3) node[left,color=black] {\small{$3m$}};
        \draw[color=gray!30] (0,4) -- (-0.2,4) node[left,color=black] {\small{$4m$}};

    \end{tikzpicture}
    \caption{Schematic plot of a rotation-invariant permutation $\pi \in \Sn$, where we fix the bipartition $\{1, \dots, m \} \sqcup \{ m + 1, \dots, 2m \}$ (see \Cref{prop:char_rotation_inv}). When $n$ is odd, we add the blue point at the centre. Here $\pi$ has the ``external structure'' of $\mathtt{3142} \in \mathbb{S}_4$. }
    \label{fig:perm_rot_inv}
\end{figure}

We now express the pattern counts of $\pi$ in terms of $\sigma$, as follows.

\begin{lemma}
    \label{lemma:pi_rot_inv_sigma_profile}
    Let $\sigma\in\mathbb{S}_m$, and let $\pi\in\mathbb{S}_n$ be obtained by rotation as in \Cref{fig:perm_rot_inv}, where $n=4m$. Then:
    \begin{align*}
        \pc{123}{\pi} & = 2\cdot\pc{123}{\sigma} + 2\cdot\pc{321}{\sigma} + 4m\cdot\pc{12}{\sigma}+2m\cdot\pc{21}{\sigma} \\
        \pc{132}{\pi} & = \pc{132}{\sigma} + \pc{231}{\sigma} + \pc{213}{\sigma} + \pc{312}{\sigma} + m^3 + m\cdot\pc{12}{\sigma} + 2m\cdot\pc{21}{\sigma}
    \end{align*}
    In particular, $\pi$ is \threeb if and only if:
    \begin{equation}
        \label{eq:sigma3bal}
        3\cdot \pc{123}{\sigma}+3\cdot\pc{321}{\sigma}+3m\cdot\pc{12}{\sigma}=\binom{m}{3}+m^3
    \end{equation}
\end{lemma}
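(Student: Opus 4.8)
The plan is to decompose occurrences of each order-$3$ pattern in $\pi$ according to how the three chosen indices are distributed among the four rotated copies of $\sigma$ (plus the central point, when $n$ is odd, though it is cleanest to first treat $n=4m$ and note the central point contributes to both counts symmetrically). Label the four blocks of columns as $I_1 = \{1,\dots,m\}$, $I_2 = \{m+1,\dots,2m\}$, $I_3 = \{2m+1,\dots,3m\}$, $I_4 = \{3m+1,\dots,4m\}$; by the construction in \Cref{fig:perm_rot_inv}, the points with columns in $I_1$ form (a rotated copy of) $\sigma$ mapping $I_1$ into the rows $\{m+1,\dots,2m\}$, those in $I_2$ form $\sigma$ mapping into rows $\{1,\dots,m\}$, those in $I_3$ form a rotated $\sigma$ into rows $\{3m+1,\dots,4m\}$, and those in $I_4$ a rotated $\sigma$ into rows $\{2m+1,\dots,3m\}$. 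The key point is that the relative vertical order of the four blocks, read left to right, is exactly the pattern $\mathtt{3142}$ (block $I_1$ sits in vertical position $3$, block $I_2$ in position $1$, block $I_3$ in position $4$, block $I_4$ in position $2$), so any triple of points taken from three \emph{distinct} blocks realises the corresponding order-$3$ sub-pattern of $\mathtt{3142}$.

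First I would enumerate the contributions by the ``type'' of the triple, i.e. the multiset of blocks the three indices fall into. The possible types are: all three in one block ($4$ choices of block); two in one block and one in another ($4\cdot 3$ ordered choices, but only the relative order of the two distinguished positions matters); and all three in distinct blocks ($\binom{4}{3}=4$ choices). For the ``all in one block'' type, each block contributes a copy of $\pc{\cdot}{\sigma}$ up to the rigid motion relating that block's copy to $\sigma$; by \Cref{lemma:pc_D4}, a block that is a $90^\circ$ or $270^\circ$ rotation of $\sigma$ contributes $\pc{g.\tau}{\sigma}$ for the appropriate $g$, and since $\{\mathtt{132},\mathtt{231},\mathtt{213},\mathtt{312}\}$ form a single orbit and $\{\mathtt{123},\mathtt{321}\}$ another (\Cref{fig:action_D4_S3}), after summing over the four blocks one gets $2\pc{123}{\sigma}+2\pc{321}{\sigma}$ worth of ``monotone'' triples distributed into $\pc{123}{\pi}$ and $\pc{321}{\pi}$, and similarly $\pc{132}{\sigma}+\pc{231}{\sigma}+\pc{213}{\sigma}+\pc{312}{\sigma}$ into the $132$-type count. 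For the ``two in one block, one in another'' type, the two points in the same block contribute either an ascent or a descent of that block's copy of $\sigma$ (giving a factor $\pc{12}{\sigma}$ or $\pc{21}{\sigma}$, again translated by $g\in D_4$), and the single point in the other block is either entirely above or entirely below the pair's block; tracking the $\mathtt{3142}$ arrangement tells us which order-$3$ pattern results in each of the $12$ sub-cases, and the factor $m$ counts the choices for the lone point. For the ``three distinct blocks'' type, there are no constraints from $\sigma$ at all: we just pick one point from each of three blocks ($m^3$ ways for each of the $4$ block-triples), and the pattern is the corresponding length-$3$ sub-pattern of $\mathtt{3142}$, namely $\mathtt{314}\sim\mathtt{213}$, $\mathtt{312}$, $\mathtt{342}\sim\mathtt{231}$, $\mathtt{142}\sim\mathtt{132}$ — all of $132$-type, contributing $4m^3$ to $\pc{132}{\pi}$-type patterns total. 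Collecting everything and using \Cref{lemma:big_rot_prof} to fold the eight individual pattern counts into the two quantities $\pc{123}{\pi}$ and $\pc{132}{\pi}$ (dividing by the orbit sizes $2$ and $4$ respectively) yields the two displayed formulas; one must double-check the bookkeeping when $n=4m+1$, but the central point lies in row and column $2m+\tfrac12$ rounded, i.e. it interacts with each block symmetrically and its contributions to the monotone and non-monotone counts cancel in the final equivalence.

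Finally, for the ``in particular'' claim: $\pi$ is \threeb iff $\pc{123}{\pi}=\pc{132}{\pi}$ by \Cref{lemma:big_rot_prof}. Substituting the two formulas and using $\pc{123}{\sigma}+\pc{321}{\sigma}+\pc{132}{\sigma}+\pc{231}{\sigma}+\pc{213}{\sigma}+\pc{312}{\sigma}=\binom{m}{3}$ and $\pc{12}{\sigma}+\pc{21}{\sigma}=\binom{m}{2}$ to eliminate the $132$-type pattern counts of $\sigma$ in favour of $\binom{m}{3}$ and $\binom{m}{2}$, the equation $\pc{123}{\pi}=\pc{132}{\pi}$ simplifies after collecting terms to $3\pc{123}{\sigma}+3\pc{321}{\sigma}+3m\pc{12}{\sigma}=\binom{m}{3}+m^3$, which is \eqref{eq:sigma3bal}. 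I expect the main obstacle to be the case analysis for the ``two in one block, one in another'' type: there are twelve ordered (block, block) pairs, and for each one must correctly identify, from the $\mathtt{3142}$ skeleton and the rotation $g$ relating the block to $\sigma$, which order-$3$ pattern an ascent versus a descent produces — a place where sign/orientation errors are easy. The cleanest way to manage this is to exploit the rotation-invariance of $\pi$ itself (so it suffices to analyse triples touching, say, block $I_1$, then multiply appropriately) together with \Cref{lemma:big_rot_prof}, which collapses the target to just the two aggregate counts and absorbs several of the sub-cases automatically.
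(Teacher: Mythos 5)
Your proposal is correct and follows essentially the same route as the paper's proof: a case analysis by how the three indices are distributed among the four rotated blocks (all in one block; two in one and one in another; one in each of three), using the $\mathtt{3142}$ block skeleton to identify the resulting pattern, and invoking \Cref{lemma:pc_D4} and \Cref{lemma:big_rot_prof} to collapse the bookkeeping. A couple of trivial blemishes: the mention of $\pc{12}{\sigma}+\pc{21}{\sigma}=\binom{m}{2}$ in the last step is superfluous (the $\pc{21}{\sigma}$ terms already cancel outright), and the aside about the central point is out of scope since the lemma is stated for $n=4m$.
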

\begin{proof}
    The expressions are obtained by case analysis. Any occurrence of a pattern $\tau \in \mathbb{S}_3$ in $\pi$ can be composed in three ways: either by taking all three points from the same ``block'', or by taking a pair from one block and a single point from another, or by picking one from each (see \Cref{fig:perm_rot_inv}). 
    
    For example, every ascending triplet in $\sigma$ and its $180^\circ$-rotation \rotatebox[origin=c]{180}{$\sigma$} contributes $1$ to $\pc{123}{\pi}$, hence the term $2\cdot \pc{123}{\sigma}$. Similarly, every choice of one element from each of $\{\rotatebox[origin=c]{0}{\ensuremath{\sigma}}, \rotatebox[origin=c]{180}{\ensuremath{\sigma}}, \rotatebox[origin=c]{90}{\ensuremath{\sigma}}\}$ forms a $\mathtt{132}$ pattern, hence the $m^3$ term in $\pc{132}{\pi}$. The other terms are obtained similarly. \Cref{eq:sigma3bal} now follows by rearranging and substituting the sum of $\sigma$'s $3$-profile by $\binom{m}{3}$.
\end{proof}
To construct an infinite \threeb family, it suffices to find permutations $\sigma\in\mathbb{S}_m$ that satisfy \Cref{eq:sigma3bal}. Initially, let us consider the following construction. Place three identical descending segments, each of length $\ell\geq 1$, in ascending order. As before, the patterns in $\sigma$ can be counted through case analysis. For example, an ascending pair is formed by choosing two of the three segments, and then one element from each. We obtain:
\[
    \pc{12}{\sigma} = 3 \ell^2, \qquad
    \pc{123}{\sigma} = \ell^3, \qquad
    \pc{321}{\sigma} = 3 \binom{\ell}{3}
\]
These values \emph{nearly} satisfy \Cref{eq:sigma3bal}. Indeed, both sides of the equation agree on the cubic and quadratic terms, and disagree \emph{only} on the linear terms. To achieve equality, we amend the construction slightly, by inserting two additional points ``in-between'' the existing ones (i.e., placing them at non-integer coordinates). For a parameter $\ell < r < 3\ell/2$ to be chosen below, and a small constant $0 < \varepsilon < 1$, the new coordinates are the following (see \Cref{fig:inc_segments_dec_with2}).
\[
    (x_1,y_1) \eqdef (r+2+\varepsilon,r+\ell+\varepsilon), \qquad
    (x_2,y_2) \eqdef (r+\ell+\varepsilon,r-\varepsilon)
\]
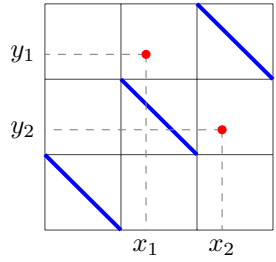
\begin{figure}[H]
    \centering
    \begin{tikzpicture}[scale=1]
        \draw[help lines, color=darkgray] (0,0) grid (3,3);

		\draw[color=blue, line width=1.5pt] (0,1) -- (1,0);
		\draw[color=blue, line width=1.5pt] (1,2) -- (2,1);
		\draw[color=blue, line width=1.5pt] (2,3) -- (3,2);

        \draw[color=gray, dashed] (1.33,2.33) -- (1.33,0) node[below, color=black] {$x_1$};
        \draw[color=gray, dashed] (2.33,1.33) -- (2.33,0) node[below, color=black] {$x_2$};
        
        \draw[color=gray, dashed] (1.33,2.33) -- (0,2.33) node[left, color=black] {$y_1$};
        \draw[color=gray, dashed] (2.33,1.33) -- (0,1.33) node[left, color=black] {$y_2$};

        \filldraw[red] (1.33,2.33) circle (1.5pt) {};
		\filldraw[red] (2.33,1.33) circle (1.5pt) {};

    \end{tikzpicture}
    \caption{Amending the basic construction of $\sigma\in\mathbb{S}_m$ by inserting two points, and fixing  $r=
    \lceil 4\ell / 3 \rceil$.}
    \label{fig:inc_segments_dec_with2}
\end{figure}
\begin{theorem}[$3$-balanced family]
    \label{thm:3balanced_construction}
    For every $n \ge 9$, there exists a \threeb permutation in $\Sn$ if and only if $n$ satisfies the divisibility conditions. That is, $n\equiv 0$, $1$, $9$, $20$, $28$, or $29 \pmod {36}$.
\end{theorem}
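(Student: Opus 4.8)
The necessity of the divisibility conditions is precisely \Cref{lemma:div_cond_k_3}, so the work is in the converse: for every admissible $n\ge 9$ we must exhibit a \threeb permutation. The plan is to stay inside the rotation-invariant world. If $n\equiv 0\pmod 4$ write $n=4m$; if $n\equiv 1\pmod 4$ write $n=4m+1$ (recall a rotation-invariant permutation of odd order must fix the centre, and deleting that point leaves a rotation-invariant permutation of order $4m$). By \Cref{prop:char_rotation_inv} and \Cref{lemma:pi_rot_inv_sigma_profile}, the permutation $\pi$ assembled from $\sigma\in\mathbb{S}_m$ and the fixed bipartition of \Cref{fig:perm_rot_inv} is \threeb exactly when $\sigma$ satisfies \Cref{eq:sigma3bal}, i.e. $3\pc{123}{\sigma}+3\pc{321}{\sigma}+3m\pc{12}{\sigma}=\binom{m}{3}+m^3$; for $n=4m+1$ the same computation, carried out with the extra centre point, reduces the problem to the same identity (or a minor variant of it) for $m=\lfloor n/4\rfloor$. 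So the whole task becomes: for each admissible $n$, produce $\sigma\in\mathbb{S}_m$ satisfying \Cref{eq:sigma3bal}.

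I would first treat the flagship family, for $m\equiv 2\pmod 3$: write $m=3\ell+2$ and take the three length-$\ell$ descending runs placed in increasing order, together with the two perturbed points $P_1=(r+2+\varepsilon,\,r+\ell+\varepsilon)$ and $P_2=(r+\ell+\varepsilon,\,r-\varepsilon)$ of \Cref{fig:inc_segments_dec_with2}, where $r=\lceil 4\ell/3\rceil$ and $0<\varepsilon<1$. Since $P_1,P_2$ have non-integer coordinates, all distinct from each other and from the grid, this is a legitimate configuration defining some $\sigma\in\mathbb{S}_{3\ell+2}$. A finite case analysis — splitting by which of the three runs each chosen point lies in, and by where $P_1,P_2$ fall relative to the runs and to each other (controlled by the inequalities $\ell<r<3\ell/2$) — expresses $\pc{12}{\sigma},\pc{21}{\sigma},\pc{123}{\sigma},\pc{321}{\sigma}$ as explicit polynomials in $\ell$ and $r$. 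Substituting into \Cref{eq:sigma3bal}, the cubic and quadratic terms cancel by design — for the un-amended three-run construction one checks directly that the left side overshoots the right by exactly $2\ell$ — and the residual linear identity is forced to hold by the choice $r=\lceil 4\ell/3\rceil$, the ceiling absorbing the value of $\ell\bmod 3$ and $\varepsilon$ resolving any remaining ties. This yields \threeb permutations in $\mathbb{S}_{4m}$ and $\mathbb{S}_{4m+1}$ for all sufficiently large $m\equiv 2\pmod 3$, hence for $n\equiv 9$ or $20\pmod{36}$.

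For the remaining residues $n\equiv 0,1,28,29\pmod{36}$ — equivalently $m\equiv 0$ or $7\pmod 9$, so $m\equiv 0$ or $1\pmod 3$ — I would run the identical scheme with the number of inserted points taken to be $0$ or $1$ instead of $2$ (and, where needed, the three run-lengths taken to differ by at most one so as to realise the target $m$), repeating the case analysis and choosing the analogue of $r$ to satisfy \Cref{eq:sigma3bal}. Each such family again works for all large enough $m$ in its residue class, so together the families cover all admissible $n$ beyond the thresholds that the constraints (such as $\ell<r<3\ell/2$ and "$\ell$ large enough to place the extra points") impose. The admissible $n$ below these thresholds form a finite list — the $19$ spurious values mentioned in the introduction — and each is dispatched individually by exhibiting an explicit \threeb permutation (for $n=9$, see \Cref{fig:threebal1}).

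The main obstacle is twofold. First, the exact verification of \Cref{eq:sigma3bal}: the pattern counts of the perturbed construction are only \emph{piecewise} polynomial in $r$, so one must check that $r=\lceil 4\ell/3\rceil$ lands in the correct piece and makes the linear terms agree on the nose, separately for each residue of $\ell$ modulo $3$, and likewise for the variant constructions — elementary but unforgiving in its handling of floors, ceilings and parities. Second, and more structural, one must verify that the finitely many families collectively hit \emph{all six} admissible residues mod $36$ for all sufficiently large $n$ (tracking carefully how $n=4m$ versus $n=4m+1$ interacts with $m\bmod 9$), and then correctly enumerate and resolve the finite set of exceptional small $n$.
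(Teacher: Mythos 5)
Your high-level plan matches the paper's: reduce to rotation-invariance via \Cref{prop:char_rotation_inv} and \Cref{lemma:pi_rot_inv_sigma_profile}, characterise $3$-balance via \Cref{eq:sigma3bal}, build $\sigma$ from three descending runs plus a few inserted points, vary the insertions across residue classes, and finish small cases individually. The observation that the unamended three-run construction overshoots \Cref{eq:sigma3bal} by exactly $2\ell$ is correct. However, two specific claims in your account do not hold up, and they are load-bearing.

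First, you assert that the two-point amendment with $r=\lceil 4\ell/3\rceil$ satisfies \Cref{eq:sigma3bal} for every $\ell$, ``the ceiling absorbing the value of $\ell\bmod 3$.'' It does not: the identity simplifies to $r=(4\ell+2)/3$, which is integral only when $\ell\equiv 1\pmod 3$, and the remark following \Cref{thm:3balanced_construction} records explicitly that for $\ell=3t$ or $\ell=3t+2$, the discrepancy with $r=\lfloor 4\ell/3\rfloor+1$ is $\pm 2$, not zero. So the flagship family works only for $\ell=3t+1$, i.e.\ $m=3\ell+2\equiv 5\pmod 9$, not for all $m\equiv 2\pmod 3$. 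Second, your residue bookkeeping is therefore wrong: with $m=9t+5$ one has $n=4m\equiv 20\pmod{36}$, but $n=4m+1\equiv 21\pmod{36}$, which is \emph{not} admissible. The same base family does not cover $n\equiv 9\pmod{36}$, contrary to your claim; in the paper $n\equiv 9$ is obtained from $m=3\ell+8$ (eight inserted points), and the odd-$n$ insertion coordinates differ throughout because the fixed centre point changes the balance condition, so the odd case does not reduce to \Cref{eq:sigma3bal} unchanged. Third, your proposal to handle the remaining residues by inserting $0$ or $1$ points, or by letting the three run lengths differ by one, is speculative, unverified, and not what the paper does (it keeps $\ell=3t+1$ throughout and inserts $4$, $6$, or $8$ further points to reach $m\equiv 7, 0, 2\pmod 9$). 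Since the substance of the theorem is precisely this case-by-case verification of \Cref{eq:sigma3bal} for each admissible residue, the proposal has a genuine gap.
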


\begin{proof}
    Without loss of generality, we first consider the case $n \equiv 20 \pmod {36}$. By \Cref{lemma:pi_rot_inv_sigma_profile}, it suffices to calculate the pattern counts of the amended $\sigma\in \mathbb{S}_m$. Taking into account the two new points, and performing case analysis similarly to the above, we have:
    \begin{align*}
        \pc{12}{\sigma} &= 3 \ell^2 + (r - 1) + (2\ell-r) + (r + 2) + (2\ell-r) \\
        \pc{123}{\sigma} &= \ell^{3}+(r-\ell-1)\ell+(r-1)(2\ell-r)+(r-\ell+2)\ell+(r+2)(2\ell-r) \\
        \pc{321}{\sigma} &= 3 \binom{\ell}{3} + \binom{2\ell-(r-1)}{2} + \binom{r-\ell}{2} + \binom{2\ell-r-2}{2} + \binom{r-\ell}{2} + (3\ell - 2r - 1)
    \end{align*}
    
    \Cref{eq:sigma3bal} now simplifies to the condition $r=(4\ell+2)/3$. Therefore, writing $\ell=3t+1$ and $r=4t+2$, we obtain an infinite family of \threeb permutations, for every choice of $t\geq 2$. We have $|\sigma| = m = 3\ell+2=9t+5$ and therefore $|\pi|=n=36t+20$, so this yields a \threeb permutation for every $n > 56$ where $n \equiv 20 \pmod {36}$. The remaining residues (see \Cref{lemma:div_cond_k_3}) can be similarly handled, by amending $\sigma$ via a specifically chosen set of points. The details appear in \Cref{section:other_3bal}.
\end{proof}
\begin{rem}
    For $n$ that fails the divisibility conditions, this construction still produces \emph{nearly} balanced permutations. In particular, letting $\ell=3t$ or $\ell=3t+2$, and taking $r=\lfloor 4\ell/3 \rfloor +1$, the discrepancy in \Cref{eq:sigma3bal} is at most $\pm 2$.
\end{rem}

\section{Non-existence of \texorpdfstring{$k$}{k}-Balanced Permutations for \texorpdfstring{$k \ge 4$}{k >= 4}}
\label{section:4balanced_nonex}

In view of the results in \Cref{section:3balanced_construction}, one may seek $k$-balanced permutations for $k>3$. In this section we show that no such permutations exist. By the monotonicity proven in  \Cref{cor:k_bal_implies_lt_k_bal}, it suffices to show that there exist no $4$-balanced permutations. 

\subsection{Warmup: Ruling out \texorpdfstring{$k(n) \ge \log n + (2 + \varepsilon)\log \log n$}{k>=logn+(2+eps)loglogn}}

For a permutation $\pi \in \Sn$ to be $k$-balanced, it clearly must have \emph{at least} $|\mathbb{S}_k|$ $k$-tuples. By Stirling's approximation of the factorial this yields $k \lesssim e \sqrt{n}$. In fact, more is true: by \Cref{cor:k_bal_implies_lt_k_bal} the number of $r$-tuples in $\pi$ must be \emph{divisible} by $r!$, for all $r \le k$. This yields the following (see \cite{cooper2008symmetric} for further discussions of these divisibility conditions).

\begin{proposition}[ruling out $k \ge \log n + (2 + \varepsilon)\log \log n$]
    \label{prop:div_rule_bal}
    Let $k=k(n)$ be a function and let $\varepsilon > 0$ be a constant. If there exist $k(n)$-balanced permutations in $\Sn$, then for any sufficiently large $n$,
    \[ k(n) < \log n + (2 + \varepsilon) \log \log n \]
\end{proposition}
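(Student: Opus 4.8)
The plan is to extract one sharp consequence of the divisibility conditions of \Cref{cor:div_conds} — the one for $r=k$, read at the prime $2$ — and to convert it into an inequality on binary digit sums via Legendre's formula. First, by \Cref{cor:div_conds}, a $k(n)$-balanced permutation in $\Sn$ forces $k!\mid\binom{n}{k}$. Since $\binom{n}{k}/k! = n!/\big((k!)^2(n-k)!\big)$, this is equivalent to $(k!)^2 \mid n(n-1)\cdots(n-k+1) = n!/(n-k)!$, and in particular it implies the $2$-adic inequality $v_2(n!)-v_2((n-k)!) \ge 2\,v_2(k!)$.

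Next I would apply Legendre's formula $v_2(m!) = m - s_2(m)$, where $s_2(m)$ denotes the sum of the binary digits of $m$. Substituting and cancelling the linear terms, the inequality above becomes $k - s_2(n) + s_2(n-k) \ge 2k - 2\,s_2(k)$, i.e.
\[
    k \;\le\; 2\,s_2(k) + s_2(n-k) - s_2(n) \;\le\; 2\,s_2(k) + s_2(n-k).
\]
Now bound the digit sums crudely: $s_2(k) \le \lfloor\log_2 k\rfloor + 1$ and, using $k<n$, $s_2(n-k) \le \lfloor\log_2 n\rfloor + 1$. This gives $k \le \log_2 n + 2\log_2 k + 3$.

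Finally I would bootstrap. The last inequality already yields a crude bound $k \le 2\log_2 n + O(1)$ (since $2\log_2 k \le k/2$ once $k$ exceeds a fixed constant), whence $\log_2 k \le \log_2\log_2 n + O(1)$; feeding this back in gives $k \le \log_2 n + 2\log_2\log_2 n + O(1)$, which is below $\log n + (2+\varepsilon)\log\log n$ for every fixed $\varepsilon>0$ and all sufficiently large $n$ (here, as the leading coefficient $1$ already indicates, $\log$ is taken base $2$).

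As for where the work is: none of the steps is technically demanding once the idea is in place. The one choice that matters is recognising that the single condition $k!\mid\binom{n}{k}$, evaluated at the prime $2$, already suffices — a larger prime $p$ would only give the weaker estimate $k \lesssim (p-1)\log_p n$ and could not pin down the constant $2$. The reduction through Legendre's formula and the bootstrap at the end are routine.
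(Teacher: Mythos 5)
Your proof is correct and follows essentially the same route as the paper: both extract the single condition $k!\mid\binom{n}{k}$, read it at the prime $2$ via Legendre's formula $\nu_2(m!)=m-s_2(m)$, and conclude $2^k/k^2=\mathcal{O}(n)$, from which the stated bound follows by bootstrapping. The only difference is presentational — the paper compresses the digit-sum bounds and the bootstrap into one line, while you spell them out and add the (correct, and worth noting) observation that the prime $2$ is the right choice since a larger prime would give a weaker constant.
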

\begin{proof}
As usual, we denote by $\nu_2(t)$ the largest integer $s$ for which $2^s \mid t$.
Since $k!\mid \binom{n}{k}$,
    \[
        \nu_2(k!) \leq \nu_2 \left[ \binom{n}{k} \right] = \nu_2(n!) - \nu_2(k!) - \nu_2((n-k)!)
    \]
    
    It is a standard fact that $\nu_2(r!)=\sum_{i\geq 0} \lfloor r/2^i\rfloor$. The value of this sum is between
    $r$, and $r - \log r - \mathcal{O}(1)$. Consequently $2^k / k^2 = \mathcal{O}(n)$, which implies the proposition. 
\end{proof}

\begin{rem}
If we take $n=(k!)^2$, then $r! \mid \binom{n}{r}$ for all $r \in [k]$. It follows that divisibility alone does not imply $k(n) \le o(\log n/\log \log n)$, so that \Cref{prop:div_rule_bal} is tight up to $\mathcal{O}(\log \log n)$ factor.
\end{rem}

\subsection{Non-existence of \texorpdfstring{$4$}{4}-Balanced Permutations}

The following simple lemma provides a polynomial identity relating the $\{2,3,4\}$-profiles of any permutation. It is a direct corollary of this lemma that there exist no $4$-balanced permutations.

\begin{lemma}
    \label{lemma:relation_2_3_4_profile}
    Every permutation $\pi \in \Sn$ satisfies the following identity: 
    \begin{alignat*}{4}
        \left( \pc{12}{\pi} \right)^2 =\ &6 \cdot \pc{1234}{\pi} &&+ 4 \cdot \pc{1243}{\pi} &&+ 4 \cdot \pc{1324}{\pi} &&+ 2 \cdot \pc{1342}{\pi} \\
        +\ &2 \cdot \pc{1423}{\pi} &&+ 4 \cdot \pc{2134}{\pi} &&+ 4 \cdot \pc{2143}{\pi} &&+ 2 \cdot \pc{2314}{\pi} \\
        +\ &2 \cdot \pc{2413}{\pi} &&+ 2 \cdot \pc{3124}{\pi} &&+ 2 \cdot \pc{3142}{\pi} &&+ 2 \cdot \pc{3412}{\pi} \\
        +\ &6 \cdot \pc{123}{\pi} &&+ 2 \cdot \pc{132}{\pi} &&+ 2 \cdot \pc{213}{\pi} &&+ \pc{12}{\pi}
    \end{alignat*}
\end{lemma}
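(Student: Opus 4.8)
The plan is to prove the identity by double counting: interpret the left-hand side as the number of ordered pairs of ``ascending pairs'' of $\pi$, and split this count by the size of the union of the two underlying index sets. Write $\mathcal{I} = \{(i,j) : 1 \le i < j \le n,\ \pi(i) < \pi(j)\}$, so that $\pc{12}{\pi} = |\mathcal{I}|$ and $\left(\pc{12}{\pi}\right)^2 = |\mathcal{I} \times \mathcal{I}|$ counts ordered pairs $\big((i_1,j_1),(i_2,j_2)\big)$ with both coordinates in $\mathcal{I}$. For such a pair set $t = |\{i_1,j_1,i_2,j_2\}|$; since $|\{i_1,j_1\}\cup\{i_2,j_2\}| = 4 - |\{i_1,j_1\}\cap\{i_2,j_2\}|$, we have $t\in\{2,3,4\}$, and the three cases correspond respectively to the two index-pairs being equal, sharing exactly one index, or being disjoint. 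This is a genuine partition of $\mathcal{I}\times\mathcal{I}$, so it suffices to evaluate the three parts separately and match them to the three groups of terms in the statement.

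The case $t=2$ forces $(i_1,j_1)=(i_2,j_2)$ (equal $2$-sets, each written in increasing order), contributing exactly $|\mathcal{I}| = \pc{12}{\pi}$ --- this yields the lone $+\pc{12}{\pi}$ term. For $t=3$, the union $S := \{i_1,j_1\}\cup\{i_2,j_2\}$ ranges over $\binom{[n]}{3}$, and the number of ordered pairs landing on a given $S$ depends only on the pattern $\pi(S)\in\mathbb{S}_3$: it equals $c_3(\tau)$, the number of ordered pairs of \emph{distinct} ascending pairs of the positions of $\tau$ (any two distinct $2$-subsets of a $3$-set already union to the whole set). Thus the $t=3$ part is $\sum_{\tau\in\mathbb{S}_3} c_3(\tau)\,\pc{\tau}{\pi}$, and a one-line enumeration gives $c_3(\mathtt{123})=6$, $c_3(\mathtt{132})=c_3(\mathtt{213})=2$, and $c_3(\tau)=0$ for the remaining three patterns (each has at most one ascending pair among its positions) --- exactly the last line of the identity.

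For $t=4$ the two index-pairs are disjoint with union $S\in\binom{[n]}{4}$, and the count on a given $S$ again depends only on $\tau = \pi(S)\in\mathbb{S}_4$: it equals $c_4(\tau)$, the number of ordered pairs $(P_1,P_2)$ of complementary $2$-subsets of $[4]$ with both $P_1$ and $P_2$ ascending under $\tau$. Organising this by the three perfect matchings of $[4]$, namely $\{\mathtt{12},\mathtt{34}\}$, $\{\mathtt{13},\mathtt{24}\}$, $\{\mathtt{14},\mathtt{23}\}$, one has $c_4(\tau) = 2\cdot|\{\text{matchings both of whose blocks are ascending under }\tau\}| \in \{0,2,4,6\}$. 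The $t=4$ part is then $\sum_{\tau\in\mathbb{S}_4} c_4(\tau)\,\pc{\tau}{\pi}$, and computing $c_4$ for each of the $24$ patterns yields precisely the first three lines of the statement (for instance $c_4(\mathtt{1234})=6$ as all three matchings qualify, $c_4(\mathtt{2143})=4$, $c_4(\mathtt{2413})=2$, and $c_4$ vanishes on every pattern absent from the list). Adding the three contributions gives the identity.

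The argument is conceptually straightforward; the only real labour is the finite verification of the twelve nonzero values of $c_4$ (and the check that the remaining twelve vanish), which I regard as the main obstacle only in the sense of being the most error-prone bookkeeping step. Two things to watch: (i) the split into $t\in\{2,3,4\}$ is truly a partition, so there is no overcounting across groups; and (ii) the ordered-pair convention contributes the factor $2$ visible in the $t\in\{3,4\}$ cases, whereas the $t=2$ diagonal is not doubled.
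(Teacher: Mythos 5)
Your proof is correct and is essentially the paper's argument, recast combinatorially rather than probabilistically: the paper samples four indices $i,j,k,l$ with replacement from $[n]$ and applies total probability over coincidence patterns and orderings, which is exactly your partition of $(\pc{12}{\pi})^2$ into the contributions from overlap size $t\in\{2,3,4\}$ and induced pattern. Your direct double-count of $\mathcal{I}\times\mathcal{I}$, organizing the $t=4$ case by the three perfect matchings of $[4]$, sidesteps the paper's case analysis of partitions of a $4$-tuple with repeats (Table~\ref{table:details_2_3_4}) and is arguably cleaner; the coefficient values $c_3$ and $c_4$ you state all check out against the identity.
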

\begin{proof}
    Let us randomly sample independently and uniformly four indices from $[n]$. We consider the event that the sampled indices form two ascending pairs in $\pi$. By independence, 
    \begin{align*}
        \Pr_{i,j,k,l \sim [n]} \left[ i < j,\ \pi(i) < \pi(j),\ k < l,\ \pi(k) < \pi(l) \right] &= \Pr_{i,j \sim [n]} \left[ i < j,\ \pi(i) < \pi(j) \right]^2 \\
        &= \left( \frac{\pc{12}{\pi}}{n^2} \right)^2 
    \end{align*}
    
    The same event can also be computed as a weighted sum of pattern
    of $\le 4$ elements, by conditioning over the possible equalities between the sampled indices, and on their ordering. Fixing the set of indices in play and their order uniquely determines the patterns in $\pi$ that
    contribute to the event above. The computation then follows by total probability. To illustrate this analysis, we briefly analyse the first term in the above identity, leaving the full details to \Cref{appendix:relation_2_3_4_full_details}.
    
    Consider the case in which all the indices $i,j,k,l$ are distinct. This event happens with probability $\left(n(n-1)(n-2)(n-3) \right)/n^4$. Conditioned on this event, fix a total order on the indices. As the indices are sampled uniformly at random and there are no ties, each  order occurs with probability exactly $(1/4!)$. Under these two conditions, it only remains to enumerate over all patterns $\tau \in \mathbb{S}_4$ satisfying the original event, each of which contributes $\pc{\tau}{\pi}/\binom{n}{4}$. For example, if $i < j < k < l$, the corresponding patterns are:
    \[
        \pc{1234}{\pi} + \pc{1324}{\pi} + \pc{3412}{\pi} + \pc{2413}{\pi} + \pc{1423}{\pi} + \pc{2314}{\pi}
    \]
    and if $i < k < j < l$, we have:
    \[
        \pc{1324}{\pi} + \pc{1234}{\pi} + \pc{3142}{\pi} + \pc{2143}{\pi} + \pc{1243}{\pi} + \pc{2134}{\pi}
    \]
    The proof is concluded by applying total probability and collecting all terms (see  \Cref{appendix:relation_2_3_4_full_details}). 
\end{proof}

The non-existence of $4$-balanced permutations now follows directly.

\begin{theorem}
    \label{thm:no_4balanced}
    There are no $4$-balanced permutations.
\end{theorem}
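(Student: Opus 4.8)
The plan is to derive \Cref{thm:no_4balanced} as an immediate consequence of the polynomial identity in \Cref{lemma:relation_2_3_4_profile}, by substituting the putative $4$-balanced profile into both sides and observing a contradiction. First I would assume, towards a contradiction, that $\pi \in \Sn$ is $4$-balanced for some admissible $n$. By \Cref{cor:k_bal_implies_lt_k_bal}, $\pi$ is then also $3$-balanced and $2$-balanced, so every entry of its $r$-profile is completely determined: $\pc{\tau}{\pi} = \binom{n}{4}/24$ for all $\tau \in \mathbb{S}_4$, $\pc{\tau}{\pi} = \binom{n}{3}/6$ for all $\tau \in \mathbb{S}_3$, and $\pc{\tau}{\pi} = \binom{n}{2}/2$ for $\tau \in \mathbb{S}_2$.

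Next I would plug these values into the identity of \Cref{lemma:relation_2_3_4_profile}. The left-hand side becomes $\bigl(\binom{n}{2}/2\bigr)^2$. On the right-hand side, the $\mathbb{S}_4$ terms all carry the common factor $\binom{n}{4}/24$, and their coefficients sum to $6+4+4+2+2+4+4+2+2+2+2+2 = 36$; the $\mathbb{S}_3$ terms carry the factor $\binom{n}{3}/6$ with coefficient sum $6+2+2 = 10$; and the final term is $\binom{n}{2}/2$. So the identity collapses to a single polynomial equation in $n$:
\[
    \left( \frac{\binom{n}{2}}{2} \right)^2 = 36 \cdot \frac{\binom{n}{4}}{24} + 10 \cdot \frac{\binom{n}{3}}{6} + \frac{\binom{n}{2}}{2}.
\]
The remaining step is a routine verification that this equality fails for every integer $n$ large enough to be admissible (indeed, the smallest admissible $4$-balanced cardinality is $n = 64$). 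Writing both sides as polynomials in $n$, the leading term on the left is $\tfrac{1}{16} n^4$ while on the right it is $\tfrac{36}{24}\cdot\tfrac{1}{24} n^4 = \tfrac{1}{16} n^4$, so the quartic terms cancel; one then compares the lower-order coefficients and checks that the resulting lower-degree polynomial in $n$ has no admissible integer root. (In fact it suffices to exhibit that the difference of the two sides is a nonzero polynomial of degree at most $3$, hence has at most three real roots, none of which is admissible.)

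I do not anticipate a genuine obstacle here: the entire content of the theorem is already packed into \Cref{lemma:relation_2_3_4_profile}, and what remains is bookkeeping. The one point requiring a little care is making sure the coefficient sums (the $36$ and the $10$) are tallied correctly from the four-row display in the lemma statement, and that the final polynomial inequality is checked over the correct range of $n$ — i.e., over all $n$ satisfying the divisibility conditions of \Cref{cor:div_conds} for $k = 4$, rather than over all integers. Since the difference polynomial is explicit and of low degree, this is a finite check.
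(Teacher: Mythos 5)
Your proposal is correct and takes essentially the same approach as the paper: assume $4$-balanced, substitute the uniform profile values into the identity of \Cref{lemma:relation_2_3_4_profile} (with the same coefficient sums $36$ and $10$), and observe that the resulting polynomial equation in $n$ fails for all admissible $n$. The paper carries the algebra one step further, simplifying the difference to $n(n-1)(2n+5)=0$, whose roots $0$, $1$, $-5/2$ are plainly not admissible — which is precisely the ``finite check'' you allude to at the end.
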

\begin{proof}
    Arguing by contradiction, suppose that $\pi \in \Sn$ is a $4$-balanced permutation. By the monotonicity property of \Cref{cor:k_bal_implies_lt_k_bal}, there holds $\pc{\tau}{\pi} = \binom{n}{r}/r!$ for every $r \le 4$ and every $\tau \in \mathbb{S}_r$. Substituting into the identity of \Cref{lemma:relation_2_3_4_profile}, we obtain:
    \begin{equation}
        \label{eq:4bal_contradiction}
        \left[\frac{\binom{n}{2}}{2!}\right]^2 = \frac{36}{4!} \binom{n}{4} + \frac{10}{3!} \binom{n}{3} + \frac{1}{2!} \binom{n}{2} 
    \end{equation}
    and simplifying yields $n(n-1)(2n + 5) = 0$, a contradiction.
\end{proof}

\subsection{Comparison to the Quasirandomness Proof of Kr\`al' and Pikhurko}
\label{subsect:comparison_kp}

In this paper we examine profiles of permutations and the conditions under which they may be balanced. This is somewhat related to \emph{quasirandomness} of permutations, and the {\em asymptotic} convergence of the $k$-profile to uniform. Formally,

\begin{definition}[quasirandom permutations] 
    \label{defn:quasirandom}
    Let $\Pi = \{ \pi_n \}$ be an infinite family of permutations of non-decreasing order. We say that $\Pi$ is quasirandom if for every $k > 1$ and every $\tau \in \mathbb{S}_k$, we have:
    \[
        \frac{\pc{\tau}{\pi_n}}{\binom{n}{k}} \to \frac{1}{k!} \ \text{, as } n \to \infty 
    \]
\end{definition}

In an influential paper, \cite{kral2013quasirandom} Kr\`al' and Pikhurko proved a conjecture of Graham (see \cite{cooper2004quasirandom}), showing that every asymptotically $4$-balanced infinite family of permutations is quasirandom. Namely, if $\Pi$ has the property that $\pc{\tau}{\pi_n}/\binom{n}{4} \to \tfrac{1}{4!}$ (as $n \to \infty$) for every $\tau \in \mathbb{S}_4$, then $\Pi$ is quasirandom. 

{\em Permutons} or permutation limits are central to
the proof of \cite{kral2013quasirandom} (see also \cite{hoppen2013limits, hoppen2011limits}). A limit object in this framework is a {\em doubly stochastic} measure. I.e., a measure with \emph{uniform marginals} on the unit square $[0,1]^2$. Any such measure $\mu$ gives rise to a sampling process that produces permutations: just pick $k$ points uniformly at random from $\mu$, and consider the corresponding planar pattern. With probability $1$ these points define a permutation, since ties in any coordinate occur with zero probability. Kr\`al' and Pikhurko show that, up to sets of measure zero, the Lebesgue measure $\lambda$ is the one and only $4$-balanced, doubly stochastic measure on $[0,1]^2$.

\paragraph{Reinterpreting the proof of \cite{kral2013quasirandom}.} Consider two experiments. In the first, sample a point uniformly from the unit square, then sample two points \emph{independently} from the measure $\mu$. In the second experiment, we first sample a point uniformly from $[0,1]^2$, then \emph{one} point from $\mu$, and another point sampled uniformly from the unit square. In both experiments we consider the event that the first sampled point lies to the top-right of both subsequent points. The success probabilities of these experiments can be expressed in terms of $\mu$ and $\lambda$'s density functions $F,G: [0,1]^2 \to [0,1]$, respectively, on the bottom-left rectangles of the unit square. Concretely, for all $(a,b) \in [0,1]^2$,
\[
    F(a,b) \eqdef \mu([0,a] \times [0,b]), \text{ and } G(a,b) \eqdef \lambda([0,a] \times [0,b]) = ab
\]
From here the proof of \cite{kral2013quasirandom} proceeds by connecting between $F,G$ and the probabilities of the aforementioned events, and then computing these probabilities using the fact that $\mu$ is $4$-balanced. Rather than recount the proof, let us slightly delay the exposition and instead proceed directly to a discrete setting, where it is more convenient to provide full details.
\begin{figure}[H]
    \centering
    \begin{tikzpicture}[scale=1]
    
        \draw (0,0) -- (4,0) -- (4,4) -- (0,4) -- (0,0);

        \draw[pattern=hatch, pattern color=lightgray, opacity=0.5, hatch size=10pt, hatch angle=0] (0,0) rectangle +(3.25,3);

        \fill[red] (3.25,3) circle (3pt) {};
        \draw[darkgray] (3.25,3) circle (3pt) {};

        \node[] at (3.25,3.4) {
\small $(x, \pi(y))$};

        \fill[blue] (1,2.3) circle (3pt) {};
        \draw[darkgray] (1,2.3) circle (3pt) {};
        \node[] at (1,2.7) {
\small $(i, \pi(i))$};

        \definecolor{gold}{HTML}{FFC971}
        \fill[gold] (1.9,0.6) circle (3pt) {};
        \draw[darkgray] (1.9,0.6) circle (3pt) {};
        
       \node [inner sep=0pt,outer sep=0pt,rounded corners=0.2cm] (explabel) at (1.9,1.35) {
\small \begin{tabular}{@{}c@{}} \small {\fontsize{8}{12}\selectfont Exp A:} $(j, \pi(j))$ \\ \small {\fontsize{8}{12}\selectfont Exp B:} $(j, \pi(k))$ \end{tabular}};
       \node [fill=gold,opacity=0.5, fit=(explabel),rounded corners=.25cm,inner sep=2pt]  {};
       \node [draw=black,thin,fit=(explabel),rounded corners=.25cm,inner sep=2pt] {};
       \node [inner sep=0pt,outer sep=0pt,rounded corners=0.2cm] at (1.9,1.35) {
\small \begin{tabular}{@{}c@{}} \small {\fontsize{8}{12}\selectfont Exp A:} $(j, \pi(j))$ \\ \small {\fontsize{8}{12}\selectfont Exp B:} $(j, \pi(k))$ \end{tabular}};
       
    \end{tikzpicture}
    \caption{Two experiments. In the first, we sample a uniform grid point and two points from $\pi$, and consider the event that both $\pi$-points fall to the bottom-left of the initial point. The second experiment starts likewise, but the final point is sampled \emph{uniformly}. In the continuous setting one cannot distinguish the hybrid distribution from the original one, whereas in the discrete setting this can be done.}
\label{fig:experiment_kp}
\end{figure}
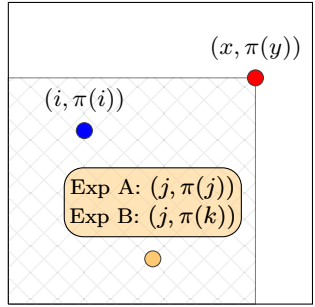

\paragraph{The discrete setting.} To recast \cite{kral2013quasirandom} in a discrete setting, let us replace the measure $\mu$ with a permutation $\pi \in \Sn$. In other words, rather than sample from $\mu$, we now sample points uniformly from the set $\{(i, \pi(i))\}_{i \in [n]}$. Instead of the two functions $F,G$ on the unit square defined above, we have two functions on the grid, $u,v: [n]^2 \to [0,1]$, which are defined by:
\[
    \forall x,y \in [n]^2:\ v(x,y) \eqdef \frac{xy}{n^2}, \text{ and } u(x,y) \eqdef \frac{|\{i \le x:\pi(i) \le y\}| }{n}
\]
Here $v$ is the bottom-left density function of the uniform doubly stochastic matrix $\frac{1}{n} \cdot \mathbbm{1} \otimes \mathbbm{1}$, and $u$ is the normalised number of points in the bottom-left rectangles of the permutation matrix associated with $\pi$ (i.e., the probability that a point chosen at random from $\pi$ falls in any such rectangle). As before, we would like to compute the probabilities of the events corresponding to the two experiments outlined above. To do so, observe that sampling a uniform point on the grid can be ``simulated'' by sampling two points independently and uniformly at random from $\{(i, \pi(i))\}_i$, keeping only the $x$-coordinate from the first point, and the $y$-coordinate from the second (discarding the two remaining coordinates). Since any permutation has ``uniform marginals'', this process yields a uniformly random point in the grid $[n]^2$. Consequently, we can now conduct the two experiments, and present their relation to the functions $u$ and $v$. By total probability, for the first experiment we have:
\begin{align*}
    \Pr_{i,j,x,y \sim [n]}[i,j \le x \land \pi(i),\pi(j) \le \pi(y) ] &= \sum_{s, t \in [n]} \Pr_{x,y \sim [n]}[x=s, \pi(y)=t] \cdot \Pr_{i,j \sim [n]}[i,j \le s \land \pi(i),\pi(j) \le t ] \\
    &= \frac{1}{n^2} \sum_{s,t \in [n]} \Pr_{i \sim [n]}[i \le s \land \pi(i) \le t]^2 \\
    &= \frac{1}{n^2} \sum_{s,t \in [n]} u(s,t)^2 = \frac{1}{n^2} \| u \| _2^2
\end{align*}
and for the second experiment: 
\begin{align*}
    \Pr_{i,j,k,x,y \sim [n]}[i,j \le x \land \pi(i),\pi(k) \le \pi(y) ] &= \sum_{s, t \in [n]} \Pr_{x,y \sim [n]}[x=s \land \pi(y)=t] \cdot \Pr_{i,j,k \sim [n]}[i,j \le s \land \pi(i),\pi(k) \le t ] \\
    &= \frac{1}{n^2} \sum_{s,t \in [n]} \Pr_{i \sim [n]}[i \le s \land \pi(i) \le t] \cdot \Pr_{j,k \sim [n]}[j \le s \land \pi(k) \le t] \\
    &= \frac{1}{n^2} \sum_{s,t \in [n]} u(s,t) \cdot v(s,t) = \frac{1}{n^2} \langle u, v \rangle
\end{align*}

Similarly to the proof of \Cref{lemma:relation_2_3_4_profile}, these two probabilities can \emph{also} be directly expressed in terms of weighted sums of permutation patterns in $\pi$, over at most $5$ points (in the continuous setting by applying Cauchy-Schwartz one can make do with only $4$-point patterns). However, this is where the proof from the continuous setting no longer carries over to the discrete case.

In the continuous setting, both events can be shown to occur with the same probability, which is precisely $\frac{1}{9}$. Then, since $\| G \|_2^2 = \frac{1}{9}$, it follows that $\langle F,G \rangle^2 = \| F \|_2^2 \| G \|_2^2$, and by Cauchy-Schwarz, $F = G$ (up to a set of measure zero), and this implies $\mu = \lambda$. In the discrete setting this does not hold. The primary difference being that we must \emph{also} consider the events in which ties occur, and unlike the continuous setting, these events have non-zero probabilities. Factoring in the possibility of ties and assuming that $\pi$ is a $5$-balanced permutation (and thus also $4$-balanced, see \Cref{cor:k_bal_implies_lt_k_bal}), we obtain the following identities (once again, by a computation similar to the proof of \Cref{lemma:relation_2_3_4_profile}):
\begin{align*}
\Pr_{i,j,x,y \sim [n]}[i,j \le x \land \pi(i),\pi(j) \le \pi(y) ] &= \frac{1}{9} + \frac{13}{36n} + \frac{7}{18n^2} + \frac{5}{36n^3} \\
\Pr_{i,j,k,x,y \sim [n]}[i,j \le x \land \pi(i),\pi(k) \le \pi(y) ] &= \frac{1}{9} + \frac{1}{3n} + \frac{13}{36n^2} + \frac{1}{6n^3} + \frac{1}{36n^4} = \frac{\| v \|_2^2}{n^2}    
\end{align*}

While the two probabilities agree on the leading term (which corresponds to the event of no ties), they \emph{disagree} on the remaining terms. Therefore we cannot apply Cauchy-Schwarz to argue that $u=v$ (which would have indeed yielded a contradiction, since the function $u$ associated with any permutation has precisely $n$ different values, whereas $v$ has $\Theta(n^2)$ different values and therefore does not correspond to any permutation), and must pursue a different proof. 

\section{The Minimal Distance from \texorpdfstring{$k$}{k}-Balanced for \texorpdfstring{$k \ge 4$}{k >= 4}}
\label{section:min_dist_kbal}

As we just saw, permutations cannot be $k$-balanced for any $k \ge 4$. But what is the \emph{smallest possible distance} (in, say, $\ell_\infty$-norm) between an \emph{attainable} profile and the uniform profile? Here is what we know:

\paragraph{Lower bound.} For any $k \ge 4$, we show that the minimal distance from $k$-balanced is at least $\Omega\left(n^{k-1} \right)$. The proof follows from a robust version of \Cref{lemma:relation_2_3_4_profile}, and is given in \Cref{subsect:lowerbound}.

\paragraph{Upper bound.} For $k=4$, we provide an explicit construction of an infinite family of permutations whose members attain a distance $\mathcal{O}\left(n^3\right)$ from uniform. The construction is based on modification of the well-known \ESZ permutation \cite{erdos1935combinatorial}, and is given in \Cref{subsect:upperbound}. Consequently, our bounds on the $4$-profile are asymptotically tight to within a constant factor. The remaining cases, where $k > 4$, are presently left open.

\paragraph{Concentration and anti-concentration} The asymptotic distribution of the $k$-profile is a well-researched topic \cite{even2020patterns, janson2013asymptotic, hofer2018central, bona2007copies}. We observe that these results imply that for any fixed $k \ge 2$, with probability $> 99\%$ the $k$-profile of a uniformly random $\pi \sim \Sn$ has distance $\Theta(n^{k-1/2})$ from balanced, as $n \to \infty$. Therefore, \emph{if} our lower bound for $k \ge 4$ is not tight for any $k$, then it is off by a multiplicative factor of $\mathcal{O}(n^{1/2})$ (see \Cref{subsubsect:random_perms} for a discussion).
\subsection{A Lower Bound on the Distance}
\label{subsect:lowerbound}

\begin{notation}[distance from uniform $k$-profile]
    Let $\pi\in\Sn$ be a permutation and let $1 \le k \le n$ be an integer. The distance of $\pi$ from the uniform $k$-profile, in $\ell_\infty$-norm, is denoted as follows:
    \[
        \delta_{\pi,k} \eqdef \max_{\tau\in\mathbb{S}_k} \left| \pc{\tau}{\pi} - \frac{\binom{n}{k}}{k!} \right|.
    \]
    We also denote the \emph{smallest} distance over \emph{all} $n$-element permutations by $\delta_k(n) \eqdef \min_{\pi \in \Sn} \delta_{\pi, k}$.
\end{notation}

\begin{lemma}[low-distance $k$-profile implies low-distance $(k-1)$-profile]
    \label{lemma:kclose_implies_lt_k_close}
    For every $\pi \in \Sn$ and $1 < k \le n$:
    \[
        \delta_{\pi,k-1} \leq \frac{k^2}{n-k+1} \delta_{\pi,k}
    \]
\end{lemma}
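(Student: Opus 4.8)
The plan is to use the downward-induction identity of \Cref{prop:down_ind_k_prof} with $r = k-1$, and to quantify the slack by comparing the actual $(k-1)$-profile entries against the uniform value. Fix $\tau \in \mathbb{S}_{k-1}$. Equation~\eqref{eq:induce} with this $\tau$ reads
\[
    \binom{n-k+1}{1} \cdot \pc{\tau}{\pi} = \binom{n-(k-1)}{k-(k-1)} \cdot \pc{\tau}{\pi} = \sum_{\sigma \in \mathbb{S}_k} \pc{\tau}{\sigma} \cdot \pc{\sigma}{\pi},
\]
so $(n-k+1)\pc{\tau}{\pi} = \sum_{\sigma \in \mathbb{S}_k} \pc{\tau}{\sigma}\pc{\sigma}{\pi}$. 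The same identity holds with every $\pc{\cdot}{\pi}$ replaced by its uniform value, because the uniform profile is exactly the profile one would get in the limiting/averaged sense; more precisely, from the combinatorial identity $\sum_{\sigma \in \mathbb{S}_k}\pc{\tau}{\sigma} = k^2$ established in the proof of \Cref{cor:k_bal_implies_lt_k_bal}, we get
\[
    (n-k+1)\cdot\frac{\binom{n}{k-1}}{(k-1)!} = k^2 \cdot \frac{\binom{n}{k}}{k!},
\]
which one checks is a genuine equality (both sides equal $(n-k+1)\binom{n}{k-1}/(k-1)!$, using $k\binom{n}{k} = (n-k+1)\binom{n}{k-1}$).

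Subtracting the two displays, I would write
\[
    (n-k+1)\left(\pc{\tau}{\pi} - \frac{\binom{n}{k-1}}{(k-1)!}\right) = \sum_{\sigma \in \mathbb{S}_k} \pc{\tau}{\sigma}\left(\pc{\sigma}{\pi} - \frac{\binom{n}{k}}{k!}\right).
\]
Now take absolute values and apply the triangle inequality: the right-hand side is at most $\left(\sum_{\sigma}\pc{\tau}{\sigma}\right)\cdot \max_{\sigma}\left|\pc{\sigma}{\pi} - \binom{n}{k}/k!\right| = k^2 \cdot \delta_{\pi,k}$, again using $\sum_{\sigma \in \mathbb{S}_k}\pc{\tau}{\sigma} = k^2$. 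Hence $|\pc{\tau}{\pi} - \binom{n}{k-1}/(k-1)!| \le \frac{k^2}{n-k+1}\delta_{\pi,k}$ for every $\tau \in \mathbb{S}_{k-1}$, and taking the maximum over $\tau$ gives $\delta_{\pi,k-1} \le \frac{k^2}{n-k+1}\delta_{\pi,k}$, as claimed.

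The argument is essentially a robust/linearized version of the proof of \Cref{cor:k_bal_implies_lt_k_bal}, so there is no real obstacle; the only point requiring a little care is verifying that the uniform profile satisfies the same linear relation \eqref{eq:induce} exactly, i.e. the identity $(n-k+1)\binom{n}{k-1}/(k-1)! = k^2\binom{n}{k}/k!$. This is a routine binomial manipulation ($k!/(k-1)! = k$ and $k\binom{n}{k} = (n-k+1)\binom{n}{k-1}$), but it is what makes the difference term on the left-hand side clean, so I would state it explicitly before subtracting.
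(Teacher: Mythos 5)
Your proof is correct and takes essentially the same route as the paper: both rely on \Cref{prop:down_ind_k_prof} with $r=k-1$ together with the identity $\sum_{\sigma \in \mathbb{S}_k}\pc{\tau}{\sigma} = k^2$. The only cosmetic difference is that you subtract the exact relation for the uniform profile and take absolute values in one step, whereas the paper bounds $\pc{\tau}{\pi}$ from above and below separately; the two formulations are equivalent.
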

\begin{proof}
Pick some $\pi \in \Sn$ and $\tau\in\mathbb{S}_{k-1}$.
Slightly modifying the proof of Equation (\ref{eq:induce}) and \Cref{cor:k_bal_implies_lt_k_bal}, we write:
    \[
        (n-k+1) \cdot \pc{\tau}{\pi} = \sum_{\sigma\in\mathbb{S}_k} \pc{\tau}{\sigma} \cdot \pc{\sigma}{\pi} \leq \left(\frac{\binom{n}{k}}{k!} + \delta_{\pi,k} \right)\sum_{\sigma\in\mathbb{S}_k} \pc{\tau}{\sigma} = \left(\frac{\binom{n}{k}}{k!} + \delta_{\pi,k} \right)\cdot k^2
    \]
    Therefore,
    \[
        \pc{\tau}{\pi} \leq \frac{\binom{n}{k-1}}{(k-1)!} + \frac{k^2}{n-k+1}\cdot \delta_{\pi,k}
    \]
    The lower bound follows similarly.
\end{proof}

\begin{theorem} [lower bound on distance from $k$-balanced]
    \label{thm:lowerbound_distance}
    For every constant $k\geq 4$, there holds $\delta_k(n) = \Omega(n^{k-1})$.
\end{theorem}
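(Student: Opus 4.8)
The plan is to reduce the case of general $k \geq 4$ to the base case $k = 4$ using the contrapositive of \Cref{lemma:kclose_implies_lt_k_close}, and then to prove the $k = 4$ lower bound directly by a robust version of the argument behind \Cref{lemma:relation_2_3_4_profile} and \Cref{thm:no_4balanced}. For the reduction: if $\pi \in \Sn$ has $\delta_{\pi,k}$ very small, then iterating \Cref{lemma:kclose_implies_lt_k_close} from $k$ down to $4$ gives $\delta_{\pi,4} \leq \left(\prod_{j=5}^{k} \frac{j^2}{n-j+1}\right)\delta_{\pi,k}$; since $k$ is a constant and $n \to \infty$, the product is $\Theta(n^{-(k-4)})$, so a bound $\delta_{\pi,4} = \Omega(n^3)$ would force $\delta_{\pi,k} = \Omega(n^3 \cdot n^{k-4}) = \Omega(n^{k-1})$, which is exactly what we want. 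So it suffices to establish $\delta_4(n) = \Omega(n^3)$.

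For the base case, I would revisit the identity of \Cref{lemma:relation_2_3_4_profile}, rewritten schematically as $\left(\pc{12}{\pi}\right)^2 = \sum_{\tau \in \mathbb{S}_4} c_\tau \pc{\tau}{\pi} + (\text{lower-order } 3\text{- and } 2\text{-profile terms})$, where all $c_\tau \in \{2,4,6\}$ and $\sum_\tau c_\tau = 36$. Suppose, for contradiction, that $\delta_{\pi,4} = o(n^3)$. Then every $\pc{\tau}{\pi}$ with $\tau \in \mathbb{S}_4$ equals $\binom{n}{4}/4! + o(n^3)$, and by \Cref{lemma:kclose_implies_lt_k_close} every $\pc{\sigma}{\pi}$ with $\sigma \in \mathbb{S}_3$ equals $\binom{n}{3}/3! + o(n^2)$ and $\pc{12}{\pi} = \binom{n}{2}/2 + o(n)$. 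Substituting these into the identity: the left side is $\left(\binom{n}{2}/2 + o(n)\right)^2 = \binom{n}{2}^2/4 + o(n^3)$; the right side is $\frac{36}{4!}\binom{n}{4} + o(n^3) + \frac{10}{3!}\binom{n}{3} + o(n^2) + \frac{1}{2}\binom{n}{2} + o(n) = \frac{36}{4!}\binom{n}{4} + \frac{10}{3!}\binom{n}{3} + \frac{1}{2}\binom{n}{2} + o(n^3)$. But as computed in the proof of \Cref{thm:no_4balanced}, the difference between these two closed-form expressions is exactly $\frac{1}{4}n(n-1)(2n+5) \cdot (\text{const})$, i.e. $\Theta(n^3)$ — a quantity bounded away from $o(n^3)$. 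This contradiction shows $\delta_{\pi,4}$ cannot be $o(n^3)$, hence $\delta_4(n) = \Omega(n^3)$. More carefully, to extract an explicit constant, I would track the $o(n^3)$ error terms as explicit multiples of $\delta_{\pi,4}$: the left side contributes an error of order $n \cdot \delta_{\pi,2} = O(n \cdot \frac{4}{n}\delta_{\pi,4}) = O(\delta_{\pi,4})$-ish after squaring, the $\mathbb{S}_4$ terms on the right contribute $36\,\delta_{\pi,4}$, and the $\mathbb{S}_3$ terms contribute $O(\frac{9}{n}\delta_{\pi,4} \cdot n) = O(\delta_{\pi,4})$; collecting, $C \cdot \delta_{\pi,4} \geq \Theta(n^3)$ for an absolute constant $C$, giving $\delta_{\pi,4} = \Omega(n^3)$.

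The main obstacle is bookkeeping rather than conceptual: one must propagate the $\ell_\infty$ perturbations cleanly through the quadratic left-hand side $\left(\pc{12}{\pi}\right)^2$ and through \Cref{lemma:kclose_implies_lt_k_close} applied at levels $3$ and $2$, ensuring that every error term is genuinely $o(n^3)$ (equivalently, bounded by an absolute constant times $\delta_{\pi,4}$ plus lower-order-in-$n$ terms) so that it cannot absorb the $\Theta(n^3)$ discrepancy coming from the polynomial $n(n-1)(2n+5)$. The one subtlety worth care is the squaring step: writing $\pc{12}{\pi} = \binom{n}{2}/2 + \epsilon$ with $|\epsilon| = O(\delta_{\pi,2}) = O(\delta_{\pi,4}/n)$, we get $\left(\pc{12}{\pi}\right)^2 = \binom{n}{2}^2/4 + \binom{n}{2}\epsilon + \epsilon^2$, and the cross term $\binom{n}{2}\epsilon = O(n^2 \cdot \delta_{\pi,4}/n) = O(n\,\delta_{\pi,4})$ — this is the dominant error and it is where the factor of $n$ in the final bound $\Omega(n^{k-1})$ versus a naive $\Omega(n^{k-1/2})$ comes from; one checks that even this term is $o(n^3)$ precisely when $\delta_{\pi,4} = o(n^2)$, which is consistent, so the argument closes. (Indeed the random-permutation discussion shows the truth is closer to $n^{k-1/2}$, so this loss is expected and unavoidable with this method.)
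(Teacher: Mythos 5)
Your approach is genuinely different from the paper's. The paper does \emph{not} reduce to the base case $k=4$: for each $k$ it argues directly with the product $\pc{12}{\pi}\cdot\pc{(1,\ldots,k-2)}{\pi}$, expanding it in two ways (once via \Cref{lemma:kclose_implies_lt_k_close} and the $o(N^{r-1})$-closeness of every $r$-profile, once by a fresh combinatorial count of how an ascending pair and an ascending $(k-2)$-run can overlap) and exhibiting a mismatch in the $N^{k-1}$ coefficient. Your plan instead iterates \Cref{lemma:kclose_implies_lt_k_close} downward to transfer the problem to $\delta_4(n)$ and then reuses the explicit identity of \Cref{lemma:relation_2_3_4_profile}. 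This reduction is valid and arguably cleaner: it spares a new combinatorial count for each $k$ and leans on the identity already established for non-existence. What the paper's route buys, in exchange, is a self-contained argument at each $k$ with explicit asymptotic coefficients.

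One correction is in order in your final "subtlety" paragraph: iterating \Cref{lemma:kclose_implies_lt_k_close} from level $4$ to level $2$ picks up a factor $\Theta(1/n)$ per step, so $\delta_{\pi,2} = O(\delta_{\pi,4}/n^2)$, not $O(\delta_{\pi,4}/n)$ as you write. With the correct bound, the cross term $\binom{n}{2}\epsilon$ is $O(n^2\cdot\delta_{\pi,4}/n^2) = O(\delta_{\pi,4})$ --- the same order as the $\mathbb{S}_4$ error terms --- and the argument closes comfortably at the $o(n^3)$ scale with no need for the stronger side-hypothesis $\delta_{\pi,4}=o(n^2)$ that you invoke. (In the main body your two slips --- writing $n\cdot\delta_{\pi,2}$ in place of $n^2\cdot\delta_{\pi,2}$, and the factor-$n$ error on $\delta_{\pi,2}$ --- happen to cancel, so the conclusion there is correct by accident.) The accompanying remark that this cross term explains a loss from $n^{k-1/2}$ to $n^{k-1}$ should also be discarded: the $n^{k-1/2}$ figure comes from the fluctuations of random permutations (\Cref{subsubsect:random_perms}), not from any limitation of this identity.
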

\begin{proof}
In the proof of \Cref{lemma:relation_2_3_4_profile},
we derived \Cref{eq:4bal_contradiction} by
expressing $\pc{12}{\pi} \cdot \pc{12}{\pi}$ as a combination of pattern counts.
To extend of this proof to $k$-profiles, consider 
the product of $\pc{12}{\pi}$ and $\pc{(1,2,\ldots,k-2)}{\pi}$. 
Fix $k \geq 4$, and suppose toward contradiction that
there exists a sequence of positive integers $N_1 < N_2 < \dots$ and corresponding
permutations of orders $N_1, N_2,\ldots$
for which $\delta_k(N_t)/N_t^{k-1} =o_t(1)$. By
\Cref{lemma:kclose_implies_lt_k_close}, the same permutations also yield
$\delta_r(N_t)/N_t^{r-1} =o_t(1)$ for every 
$1 < r \leq k$.  In other words, every pattern in the $r$-profile of 
$\pi_N$ is $o(N^{r-1})$ away from $\binom{N}{r}/r!$.
    
As in the proof of \Cref{lemma:relation_2_3_4_profile}, we equate between two ways to express
the product of pattern counts in $\pi_N$. On the one hand:
    \begin{align}
        \label{eq:12k_first}
        \pc{12}{\pi_N} \cdot \pc{(1,\ldots,k-2)}{\pi_N} &= \left( \frac{\binom{N}{2}}{2!} \pm o(N) \right) \left( \frac{\binom{N}{k-2}}{(k-2)!} \pm o(N^{k-3}) \right) \nonumber \\
        &= \frac{1}{4(k-2)!^2}\cdot N^k + \frac{-k^2+5k-8}{8(k-2)!^2} \cdot N^{k-1} \pm o(N^{k-1})
    \end{align}

    On the other hand, we express the product as a sum of patterns of lengths $k$, $k-1$, and $k-2$, obtained from all possible ways in which the two patterns can be combined. 
    In this discussion it is helpful to think of a permutation as an axis-unaligned set of points in the grid.
    To account for the $k$-patterns that are generated, consider the insertion of an ascending pair into the permutation $(1,\ldots,k-2)$. There are $k-1$ possible $x$-coordinates at which we may insert the first point, and then $k$ for the second point. This counts every pair twice, so there are $k(k-1)/2$ options. The same is true of the $y$-coordinates, giving $\left(k(k-1)/2\right)^2$ patterns of length $k$.

    If a $(k-1)$-pattern is formed, necessarily one of the ascending pair's elements coincides with a points of $(1,\ldots,k-2)$, and the other does not. Suppose the former has coordinates $(i,i)$, for some $1\leq i\leq k-2$. To insert another element before it, such that an ascending pair is formed, we can freely insert a point among the permutation $(1,\ldots,i-1)$, and there are $i^2$ ways to choose its coordinates. Similarly, to insert an element to the top-right of $(i,i)$, there are $(k-1-i)^2$ options. Summing these squares over $1\leq i\leq k-2$ gives $2\cdot (k-2)(k-1)(2k-3)/6$. The remaining case, in which a $(k-2)$-pattern is formed, is negligible in this calculation. Indeed, there are at most $\mathcal{O}(N^{k-2})$ such patterns, each occurring a constant number of times. Overall, we have:
    \[
        \left(\frac{k(k-1)}{2}\right)^2 \left( \frac{\binom{N}{k}}{k!} \pm o(N^{k-1}) \right) + \frac{2(k-2)(k-1)(2k-3)}{6} \left( \frac{\binom{N}{k-1}}{(k-1)!} \pm o(N^{k-2}) \right) + \mathcal{O}(N^{k-2})
    \]
    and therefore:
    \begin{align}
        \label{eq:12k_second}
        \pc{12}{\pi_N} \cdot \pc{(1,\ldots,k-2)}{\pi_N} = \frac{1}{4(k-2)!^2} \cdot N^k + \frac{-3k^3 +22k^2 -59k+48}{24(k-1)!(k-2)!}\cdot N^{k-1} \pm o(N^{k-1})
    \end{align}
    
    \Cref{eq:12k_first} and \Cref{eq:12k_second} agree on their leading terms, but not on the second-order term for any $k \ge 4$. The contradiction follows by taking a sufficiently large $N$.
\end{proof}

\subsection{Matching Upper Bound on the Distance for \texorpdfstring{$k=4$}{k=4}}
\label{subsect:upperbound}
In this subsection we show that \Cref{thm:lowerbound_distance} is asymptotically tight when $k=4$. I.e., there exists an infinite family of permutations, the $4$-profiles of which are only $\mathcal{O}(n^{3})$ away from balanced. Our construction is a modification of the classical \emph{\ESZ permutation} \cite{erdos1935combinatorial}.

\begin{definition}[\ESZ permutations]\label{def:esz}
    \label{defn:erdos_szekeres}
    For integers $n,m \ge 1$ and $\theta>0$, let $\mathcal{P}(\theta)$ be the set of points in the $[n] \times [m]$ grid, rotated by an angle of $\theta$ about the origin. Let $\delta>0$ be the smallest angle such that some two points in $\mathcal{P}(\delta)$ reside on the same axis-parallel line. Pick some $0<\varepsilon<\delta$. The positive (resp.\ negative) \ESZ Permutation, denoted $\ES^+(n,m)$ (resp.\ $\ES^-(n,m)$) is the permutation associated with the point set $\mathcal{P}(\varepsilon)$ (resp.\ $\mathcal{P}(-\varepsilon)$). When $n=m$, we omit the second operand.
\end{definition}

\paragraph{Motivation.} In \cite{kral2013quasirandom} it was shown that for any $k\ge 4$, the \emph{unique} measure corresponding to a limit permutation with balanced $k$-profiles is the Lebesgue measure on the unit square. This suggests that in search of permutations with nearly balanced $k$-profiles, one may consider ``square-like'' families whose members locally resemble this measure. While the \ESZ permutations are natural candidates in this respect, their distance from uniform substantially exceeds the cubic bound.

\begin{proposition} [\ESZ is far from $4$-balanced]
    \label{prop:distance_es_basic}
    Let $n > 1$ be an integer and let $\pi = \ES^+(n)$ be the \ESZ permutation over $n^2$ elements. Then $\pc{3142}{\pi} = \binom{n+2}{4}^2$, and in particular:
    \[
         \delta_{\pi, 4} \ge \frac{1}{144} n^{7} + \Omega(n^6)  
    \]
\end{proposition}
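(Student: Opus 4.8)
\textit{Proof sketch.} The plan is to compute $\pc{3142}{\pi}$ exactly and then compare with the $4$-balanced value $\binom{n^2}{4}/4!$. I would begin by unpacking the structure of $\pi = \ES^+(n)$ in grid coordinates: identify $\pi$ with its defining point set, the $[n]^2$ integer grid rotated by a sufficiently small positive angle, which sends $(x,y)$ to approximately $(x-\varepsilon y,\; y+\varepsilon x)$. Sorting by the first, resp.\ second, coordinate shows that a grid point $(x,y)$ occupies the \emph{position} in $\pi$ determined by $x$ ascending, with ties (points sharing a column) broken by $y$ descending, and carries the \emph{value} determined by $y$ ascending, with ties (points sharing a row) broken by $x$ ascending. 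An order-$4$ pattern of $\pi$ is then a choice of four grid points, and the induced pattern is read off from these two total orders alone.

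Next, I would pin down exactly which $4$-subsets induce $3142$. Listing the four points in position order as $(x_1,y_1),\dots,(x_4,y_4)$ — so $x_1\le x_2\le x_3\le x_4$, with $x_i=x_{i+1}$ forcing $y_i>y_{i+1}$ — the induced pattern is $3142$ iff the value order of the four points is ``$2<4<1<3$'' (labels denoting position order). Unwinding these three comparisons, and noting that $x_1\le x_4$ kills the tie-broken branch of ``$\mathrm{val}(4)<\mathrm{val}(1)$'' and hence forces $y_4<y_1$, they collapse to the single chain
\[
    y_2 \;\le\; y_4 \;<\; y_1 \;\le\; y_3 .
\]
The crux is that once this chain holds, the position-order tie-breaks decouple from the $y$-data: $x_1=x_2$ and $x_3=x_4$ are automatically consistent (the chain already gives $y_1>y_2$ and $y_3>y_4$), whereas $x_2=x_3$ is impossible (it would demand $y_2>y_3$). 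Hence the position constraint is the purely $x$-side statement $x_1\le x_2<x_3\le x_4$. A short converse check shows that any $x$-tuple with $x_1\le x_2<x_3\le x_4$ in $[n]$, together with any $y$-tuple with $y_2\le y_4<y_1\le y_3$ in $[n]$, yields four \emph{distinct} grid points, already in position order, whose induced pattern is $3142$; and distinct pairs of tuples yield distinct $4$-subsets.

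Consequently $\pc{3142}{\pi}$ equals the number of $x$-tuples $x_1\le x_2<x_3\le x_4$ over $[n]$ times the number of $y$-tuples $y_2\le y_4<y_1\le y_3$ over $[n]$. Both equal $\binom{n+2}{4}$: a weakly increasing length-$4$ tuple over $[n]$ with one strict step in the middle is in bijection with a $4$-element subset of $[n+2]$ (raise the last two entries by $1$ to separate the three steps). Thus $\pc{3142}{\pi}=\binom{n+2}{4}^2$. For the distance bound, $\pi$ has $N=n^2$ elements, and expanding
\[
    \binom{n+2}{4}^2 - \frac{\binom{n^2}{4}}{4!} \;=\; \frac{4n^7+8n^6-8n^5-18n^4+4n^3+10n^2}{576} \;=\; \frac{1}{144}n^7 + \Omega(n^6),
\]
which is positive, gives $\delta_{\pi,4} \ge \bigl|\pc{3142}{\pi} - \binom{n^2}{4}/4!\bigr| \ge \frac{1}{144}n^7 + \Omega(n^6)$.

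The one genuinely delicate step is the exact characterisation of the $3142$-occurrences: one must keep careful track of the tie sub-cases (points sharing a row, resp.\ a column) and verify both implications, since it is precisely those tie configurations that upgrade the ``generic'' count $\binom{n}{4}^2$ to $\binom{n+2}{4}^2$. Everything else — the bijection to $4$-subsets of $[n+2]$ and the closing polynomial arithmetic — is routine.
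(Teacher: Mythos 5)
Your proof is correct and takes essentially the same route as the paper's: identify the $3142$-occurrences with pairs of weakly ordered $x$- and $y$-tuples with one forced strict inequality each, count each side as $\binom{n+2}{4}$, and close with the polynomial comparison. The only differences are cosmetic — you unpack the tie-breaking rules in more explicit value/position terms and use a shift bijection to get $\binom{n+2}{4}$, whereas the paper phrases the tie-breaks via "horizontal pairs ascend, vertical pairs descend" and decomposes the count as $\binom{n}{4}+2\binom{n}{3}+\binom{n}{2}$.
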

\begin{proof}
    Let $n > 1$, let $\pi = \ES^+(n)$ and let $\tau = \mathtt{3142}$. Let $A$ be the following set of four points, $A = \{(x_1, y_1), (x_2, y_2), (x_3, y_3), (x_4, y_4)\} \subseteq ([n] \times [n])$, whose $x$-coordinates are in weakly ascending order, $x_1 \le x_2 \le x_3 \le x_4$.
    
    In order for $A$ to form an instance of $\tau$ in $\pi$, its $y$-coordinates must weakly agree with the ordering of $\tau$, i.e., $y_2 \le y_4 \le y_1 \le y_3$. As the points of $\pi$ correspond to the rotation of the $[n] \times [n]$ grid by a small positive angle, any pair of points on a horizontal line becomes an ascending pair, and any pair on a vertical line becomes a descending pair. Therefore, since $\tau(2) < \tau(3)$, we necessarily have $x_2 < x_3$ (they cannot lie on a vertical line), and since $\tau(1) > \tau(4)$ we necessarily have $y_1 > y_4$ (they cannot lie on a horizontal line). These two conditions imply that $|A|=4$, since any two points in $A$ disagree on some coordinate. In fact, these conditions prevent \emph{all} ascending pairs of $\tau$ from lying on vertical line, and \emph{all} descending pairs from lying on a horizontal line, and are therefore \emph{sufficient} in order for $A$ to induce a copy of $\tau$. Consequently,
    \begin{align*}
        \pc{\tau}{\pi} &= \Big| \big\{x_1 \le x_2 < x_3 \le x_4,\  y_2 \le y_4 < y_1 \le y_3\ :\ x_1,\dots,x_4 \in [n],\ y_1,\dots,y_4 \in [n] \big\} \Big| \\
        &= \Big| \big\{x_1 \le x_2 < x_3 \le x_4\ :\ x_1,\dots,x_4 \in [n] \big\} \times \big\{ y_2 \le y_4 < y_1 \le y_3\ :\ y_1,\dots,y_4 \in [n] \big\} \Big| \\
        &= \Big| \big\{x_1 \le x_2 < x_3 \le x_4\ :\ x_1,\dots,x_4 \in [n] \big\} \Big|^2
    \end{align*}

    In the latter case there are four possible choices to consider: a set of four distinct points; a triple with either ($x_1$ and $x_2$) or ($x_3$ and $x_4$) identified; or, a pair with both the aforementioned identifications. Therefore, 
    \begin{align*}
        \pc{\tau}{\pi} &= \Big| \big\{x_1 \le x_2 < x_3 \le x_4\ :\ x_1,\dots,x_4 \in [n] \big\} \Big|^2 = \left( \binom{n}{4} + 2\binom{n}{3} + \binom{n}{2} \right)^2 = \binom{n+2}{4}^2 \qedhere
    \end{align*}
\end{proof}

\begin{rem}
    Apart from $\tau = \mathtt{3142}$, there is only one other pattern $\sigma = \mathtt{2413}$ in $\mathbb{S}_4$, for which the distance is $\Theta(n^7)$. The computation for this pattern proceeds identically to the proof of \Cref{prop:distance_es_basic}, the only difference being the set of strict inequalities imposed. All remaining entries of the $4$-profile of $\ES^+(n)$ are indeed within $\mathcal{O}(n^6)$ of uniform.
\end{rem}

\subsubsection{A Modification of \ESZ}
\label{subsect:es_mod}

We next define a modification of the \ESZ permutation.

\begin{definition} [two-sided \ESZ]
        With $n,m$ and $\varepsilon$ as in \cref{def:esz}, the two-sided \ESZ Permutation, denoted $\ES^\pm(n,m)$, is the permutation associated with the points $\mathcal{P}(\varepsilon) \sqcup \mathcal{P}(-\varepsilon)$.
\end{definition}

The $4$-profile of this family of permutations is optimally balanced, up to a multiplicative constant.

\begin{theorem} [$4$-profile of two-sided \ESZ]
    \label{thm:prof_mod_es}
    Let $n \ge 1$ and let $\pi = \ES^{\pm}(n) \in \mathbb{S}_{2n^2}$ be the two-sided \ESZ permutation. Then, 
    \[
        \delta_{\pi, 4} = \frac{2n^6}{9} + \mathcal{O}(n^5)
    \]
\end{theorem}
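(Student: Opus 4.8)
The plan is to compute the entire $4$-profile of $\pi = \ES^{\pm}(n)$ exactly (up to the $\mathcal{O}(n^5)$ error term), and then take the maximum deviation from the uniform value $\binom{2n^2}{4}/4!$. Since $|\pi| = 2n^2$, the uniform value is $\tfrac{1}{24}(2n^2)^4 + \mathcal{O}(n^6) = \tfrac{2}{3}n^8 + \mathcal{O}(n^6)$, so the leading $n^8$ term of each pattern count must cancel against it, and the claim is that the residual $n^6$ coefficient is largest (in absolute value) for the patterns $\mathtt{3142}$ and $\mathtt{2413}$, where it equals $\tfrac{2}{9}$. I would organize the count of $\pc{\tau}{\pi}$ for each $\tau \in \mathbb{S}_4$ by how the four chosen points are distributed between the two rotated copies $\mathcal{P}(\varepsilon)$ (call it the ``$+$'' block) and $\mathcal{P}(-\varepsilon)$ (the ``$-$'' block): all four in $+$, all four in $-$, or a $3$--$1$, $1$--$3$, or $2$--$2$ split. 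The all-$+$ and all-$-$ contributions are governed by \Cref{prop:distance_es_basic} and its remark — each is $\Theta(n^7)$ only for $\mathtt{3142}$ and $\mathtt{2413}$ and $\mathcal{O}(n^6)$ otherwise — so in fact these contribute only at the $\mathcal{O}(n^7) = \mathcal{O}(n^{3.5}\cdot\text{(profile scale)})$ level, which is below $n^6\cdot$(anything); wait, $n^7$ is larger than $n^6$, so I need to be careful: actually the point is that the full counts are $\Theta(n^8)$, the same-block contributions are $\mathcal{O}(n^7)$, hence they affect neither the $n^8$ nor (after subtracting uniform, whose own expansion has no $n^7$ term) do they affect the leading $n^6$-order discrepancy except through lower order; so the decisive contribution comes from the mixed splits, which is where the two blocks interact.

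The heart of the computation is therefore the cross terms. For a $3$--$1$ split with three points in $+$ and one in $-$: since the $+$-block is a grid rotated by $+\varepsilon$ and the $-$-block by $-\varepsilon$, and the two blocks occupy (essentially) the same square region, I can set up coordinates so that a ``$+$-point'' at grid position $(a,b)$ sits at roughly $(a - \varepsilon b,\ b + \varepsilon a)$ and a ``$-$-point'' at $(c,d)$ at roughly $(c + \varepsilon d,\ d - \varepsilon c)$. To leading order in $n$ the $\pm\varepsilon$ tilt is a tie-breaking device: two points from different blocks compare by their grid coordinates unless those coordinates are equal, in which case the tilt decides. So for the mixed splits the count reduces to a weakly-monotone lattice-point count exactly as in \Cref{prop:distance_es_basic}, but now with the identifications between the $+$-coordinates and the $-$-coordinate resolved by the sign of the tilt. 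Concretely, I would, for each $\tau \in \mathbb{S}_4$ and each split type, write $\pc{\tau}{\pi}$ restricted to that split as a sum over which of the four positions lies in the minority block, of a product of two one-dimensional counts (one for $x$-coordinates, one for $y$-coordinates) of the form $|\{z_1 \lessgtr z_2 \lessgtr z_3 \lessgtr z_4 : z_i \in [n]\}|$ with a prescribed pattern of strict/weak inequalities dictated by $\tau$ together with the tie-breaking rule. Each such count is a polynomial in $n$ of degree $4$ expressible via $\binom{n}{4} + c_3\binom{n}{3} + c_2\binom{n}{2}$, so the product is a degree-$8$ polynomial and the bookkeeping, while lengthy, is mechanical.

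Once all splits are summed, for each $\tau$ I get $\pc{\tau}{\pi} = \tfrac{2}{3}n^8 + e_\tau n^6 + \mathcal{O}(n^5)$ (the $n^7$ coefficient should vanish after adding up all splits by a symmetry/cancellation between the $3$--$1$ and $1$--$3$ contributions and between $+$ and $-$, which I would verify as a sanity check), and then $\delta_{\pi,4} = \max_\tau |e_\tau| \cdot n^6 + \mathcal{O}(n^5)$. The final step is to observe that the maximum is attained at $\mathtt{3142}$ and $\mathtt{2413}$ with $|e_\tau| = \tfrac{2}{9}$; here the intuition is that the two-sided construction was designed precisely to ``cancel'' the $\Theta(n^7)$ surplus of $\mathtt{3142}$ visible in \Cref{prop:distance_es_basic}, and the residual imbalance of order $n^6$ is what survives. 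The main obstacle is purely the volume and reliability of the case analysis: there are $24$ patterns and five split types, and for each one must correctly translate $\tau$'s inequalities plus the $\pm\varepsilon$ tie-breaking into the right combination of strict versus weak inequalities in the two one-dimensional lattice counts — an error in a single inequality changes a $\binom{n}{3}$ coefficient and hence the $n^6$ term. I would mitigate this by exploiting the $D_4$-symmetry of \Cref{lemma:pc_D4} (the two-sided \ESZ set is itself invariant, up to relabeling, under the relevant reflections/rotations, so the $e_\tau$ come in equivalence classes) to cut the number of genuinely distinct computations down to a handful, and by the consistency checks that $\sum_\tau \pc{\tau}{\pi} = \binom{2n^2}{4}$ and that the $n^8$ and $n^7$ coefficients behave as predicted.
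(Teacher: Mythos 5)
Your plan is a valid approach, but it differs from the paper's in an instructive way. The paper does not carry out the split-type case analysis at all. Instead it makes a single structural observation: for every $\tau\in\mathbb{S}_4\setminus\{\mathtt{2413},\mathtt{3142}\}$, the count $\pc{\tau}{\ES^\pm(n)}$ can be written as a double sum over two fixed points (and their colours) of a product of rectangle volumes, and each such volume is a polynomial in $n$; hence $\pc{\tau}{\ES^\pm(n)}$ is a polynomial in $n$ of degree at most $8$. For the two exceptional ``crossing'' patterns, $\ES^\pm(n)$ is an involution so $\pc{2413}{\pi}=\pc{3142}{\pi}$, and since the full profile sums to $\binom{2n^2}{4}$, these two counts are forced to be polynomials as well. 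Once polynomiality is established, the paper simply evaluates the profile at $n=1,\dots,9$ and interpolates. This collapses the verification to a finite, trustable computation, whereas your plan commits you to carrying out all $24\times 5$ (colouring-by-pattern) bookkeeping by hand. Both are legitimate, but the paper's route sidesteps exactly the ``volume and reliability'' risk you flag at the end. Incidentally, your $x$--$y$ factorisation into two independent $1$D lattice counts (with weak/strict inequalities dictated by the colouring and the $\pm\varepsilon$ tilt) is correct and does handle $\mathtt{2413}$ and $\mathtt{3142}$ uniformly — a point in your favour, since the paper's ``rectangle'' parameterisation breaks for those two and forces the involution detour.

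One concrete error in the middle of your argument should be corrected: the same-block contributions are \emph{not} $\mathcal{O}(n^7)$. The all-$+$ contribution to $\pc{\tau}{\pi}$ is $\binom{n^2}{4}/24 + \Theta(n^7\text{ or }n^6) = \Theta(n^8)$ for every $\tau$. What is $\mathcal{O}(n^7)$ is only the \emph{deviation} of the all-$+$ count from the per-block uniform $\binom{n^2}{4}/24$, which is a different quantity from the deviation you are ultimately measuring (against $\binom{2n^2}{4}/24\approx n^8/36$, which is $8$ times larger than $2\cdot\binom{n^2}{4}/24\approx n^8/288$). So there is no order-of-magnitude shortcut here: the mixed contributions supply the bulk of the $n^8$ mass, and the $n^7$-order surplus coming from $\mathtt{3142}$, $\mathtt{2413}$ in the same-block terms must be shown to cancel against the mixed terms. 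You assert this cancellation (correctly, I believe, since that is precisely what makes the construction work), but you do not verify it; in the worked-out version, this is exactly the computation you would need to do carefully. The asserted value $|e_\tau|=\tfrac{2}{9}$ and the claim that the maximum sits at $\mathtt{3142},\mathtt{2413}$ are also stated without derivation. In short: the framework is sound, the $D_4$-symmetry reduction is a good idea, but the proof as written contains a wrong asymptotic claim and defers the decisive computation entirely.
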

\begin{proof}
    Consider two copies of the $[n] \times [n]$ grid; one blue and one red. We slightly rotate the blue grid counterclockwise around the origin, so that any two points on a horizontal line become an ascending pair, and any two points on a vertical line become a descending pair. The red copy of the grid is likewise rotated clockwise, so that the above rules are reversed. These two rotations also enforce particular alignment (ascending or descending) to every bi-coloured pair of points on an axis-aligned line.
    
    We turn to count the occurrences of any $\tau \in \mathbb{S}_4$ induced by the two grids. Let us first describe a counting process that applies to all patterns in $\mathbb{S}_4$ other than $\mathtt{2413}$ and $\mathtt{3142}$. In this process, our calculations are carried out on the integer grid, keeping in mind the rotations of the blue and red copies. So, consider some other pattern, say $\tau = \mathtt{1243}$.
    Any instance of $\tau$ is determined by first fixing two ascending grid points and their colours, and naming them $\perm{2}$ and $\perm{3}$, respectively. It remains to fix $\perm{1}$ and $\perm{4}$. We observe that \emph{given} a choice of the first two points, the feasible regions for either of the remaining points are defined by disjoint \emph{rectangles}, whose corners are determined by the boundaries of the grid and by the coordinates of $\perm{2}$ and $\perm{3}$. Thus, for a fixed choice of the two points, the number of $\tau$-instances is simply the product of the volumes of both rectangles, where {\em volume} means the number of grid points (of either colour) within said rectangle.
    
    \begin{figure}[H]
        \centering
        \begin{tikzpicture}[scale=1]

            \draw[help lines, color=darkgray, opacity=0.8] (0,0) grid (5,5);
            
            \fill[color=white] (0.01,0.01) rectangle (1.99,1.99);
            \fill[color=gray, opacity=0.1] (0,0) rectangle (2,2);
            \draw [pattern=north east lines, distance=10pt, pattern color=darkgray, opacity=0.4] (0,0) rectangle (2,2);
            \node[] at (1,1) {1};
            \draw [dashed, color=red, line width=0.3mm] (2,2.03) -- (0,2.03);
            \draw [dashed, color=blue, line width=0.3mm] (2.04,1.97) -- (0,1.97);
            \draw [color=blue, line width=0.3mm] (2.03,2) -- (2.03,0);
            \draw [color=red, line width=0.3mm] (1.97,2) -- (1.97,0);

            \fill[color=white] (2.01,3.01) rectangle (3.99,4.99);
            \fill[color=gray, opacity=0.1] (2,3) rectangle (4,5);
            \draw [pattern=north east lines, distance=10pt, pattern color=darkgray, opacity=0.4] (2,3) rectangle (4,5);
            \node[] at (3,4) {4};
            \draw [color=red, line width=0.3mm] (2,3.03) -- (4,3.03);
            \draw [color=blue, line width=0.3mm] (2.03,2.97) -- (4.09,2.97);
            
            \draw [color=red, line width=0.3mm] (1.97,3) -- (1.97,5);
            \draw [dashed, color=blue, line width=0.3mm] (2.03,3) -- (2.03,5);

            \draw [dashed, color=red, line width=0.3mm] (4.03,3) -- (4.03,5);
            \draw [color=blue, line width=0.3mm] (3.97,2.92) -- (3.97,5);

            \fill[red] (4,3) circle (3pt) {};
            \draw[darkgray] (4,3) circle (3pt) {};
            \fill[red] (2,2) circle (3pt) {};
            \draw[darkgray] (2,2) circle (3pt) {};
            \node[] at (1.8,2.3) {2};
            \node[] at (4.2,3.3) {3};

        \end{tikzpicture}
        \caption{Counting the pattern $\tau = \mathtt{1243}$ in $\ES^\pm(n)$. Fixing the points $\perm{2}$ and $\perm{3}$ determines the feasible rectangles for $\perm{1}$ and $\perm{4}$. Coloured solid lines indicate that points of that colour may be taken at the boundary. Dashed lines indicate that they may not. }
        \label{fig:count_1243_es_pm}
    \end{figure}
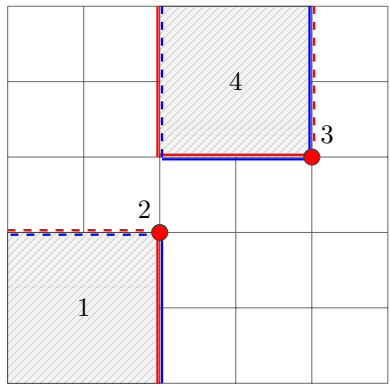

    Crucially, we remark that the colours of the two fixed points determine the ``tie-breaks'' in computing the volumes; for instance, if $\perm{2}$ is a red point, then the bottom-left rectangle from which $\perm{1}$ is sampled may include either colour on a vertical line with $\perm{2}$, but cannot include red or blue points on a horizontal line with $\perm{2}$. In more detail: say $R$ and $B$ are a red and a blue grid point, respectively, to the left of $\perm{2}$. Then, the clockwise rotation of the red grid puts the rotated $R$ higher than the rotated $\perm{2}$, and the counterclockwise rotation of the blue grid puts the rotated $B$ higher than the clockwise rotated $\perm{2}$.

    Consequently, for any $\tau \in \mathbb{S}_4 \setminus \left\{ \mathtt{2413}, \mathtt{3142} \right\}$, the number of occurrences of $\tau$ is characterised by a sum (over all two combinations of two fixed points) of the products of volumes of rectangles determined by these points. Rather than directly compute these sums, we take a shortcut: note that such an expression is a polynomial in $n$. Indeed, the volumes of the rectangles are clearly polynomials in the coordinates of their corners, and in $n$, and the sum is taken over all choices of the two fixed points. The remaining two patterns, $\mathtt{2413}$ and $\mathtt{3142}$ are inverses of one another, and by construction $\ES^\pm(n)$ is an involution. Thus $\pc{\sigma}{\ES^\pm(n)} = \pc{\sigma^{-1}}{\ES^\pm(n)}$ for \emph{any} permutation $\sigma$, and in particular, $\pc{2413}{\ES^\pm(n)} = \pc{3142}{\ES^\pm(n)}$. Since the sum of all $4$-profiles is $\binom{n}{4}$, a polynomial in $n$, the remaining two pattern-counts are therefore \emph{also} polynomials in $n$.
    
    To conclude, every entry in the $4$-profile of $\ES^\pm(n)$ is a polynomial in $n$, of degree at most $8$ (there are only $\Theta(n^8)$ four-tuples). The proof now follows by directly computing $\pc{\tau}{\ES^{\pm}(n)}$ for all $n \in \{1, \dots, 9\}$, and for every $\tau \in \mathbb{S}_4$, and applying Lagrange interpolation over these points.
\end{proof}

\subsubsection{Profiles and Distance of Random Permutations}
\label{subsubsect:random_perms}

A simple probabilistic argument shows that for every \emph{fixed} $k \ge 2$ and large $n$, the $k$-profile of almost every permutation in $\Sn$ is $\left(\binom{n}{k}/k! \pm o(n^k)\right) \mathbbm{1}$. In this discussion we are interested in exactly how close to balanced the $k$-profile of a typical (random) permutation is, and in particular, whether this distance attains, or nearly attains, our lower bound of \Cref{thm:lowerbound_distance}.

So fix some $k \ge 2$ and consider a pattern $\tau \in \mathbb{S}_k$. Associated with $\tau$ is the random variable $X_\tau \eqdef \pc{\tau}{\pi}$ where $\pi$ is uniformly sampled from $\Sn$. Clearly, $\E[X_\tau] = \binom{n}{k}/{k!}$. The distribution of $X_\tau$, its moments, and even the pairwise joint distributions of patterns have received considerable attention (e.g., \cite{even2020patterns, janson2013asymptotic, hofer2018central, bona2007copies}). It is known in particular that  $X_\tau$ satisfies a central limit theorem. Concretely, there exists a constant $\sigma_\tau > 0$ such that as $n \to \infty$,
\[
    \sqrt{n} \left(\frac{ X_\tau }{\binom{n}{k}} - \frac{1}{k!} \right) \xrightarrow[\hphantom{space} ]{d} N(0, \sigma_\tau)
\]

This CLT implies asymptotic concentration and anti-concentration of $k$-profiles.

\begin{proposition}[concentration and anti-concentration of $k$-profile]
    Let $k \ge 2$ be a constant. Then, for any $\alpha > 0$ we have:
    \[
        \Pr_{\pi \sim \Sn} \left[ \delta_{\pi, k} \ge \alpha \cdot n^{k-1/2} \right] = 2 \cdot \Phi\left(-\frac{\alpha}{k!}\right) \pm o(1)\ \ \ \ \ \ 
    \]
    and conversely (by union over $\mathbb{S}_k$),
    \[
        \Pr_{\pi \sim \Sn} \left[ \delta_{\pi, k} \le \alpha \cdot n^{k-1/2} \right] \ge 1 - 2 k! \cdot \Phi\left(-\frac{\alpha}{k!}\right) \pm o(1)
    \]
\end{proposition}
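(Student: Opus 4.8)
The plan is to reduce both inequalities to the fluctuation of a \emph{single} pattern count at scale $n^{k-1/2}$, and then combine over $\mathbb{S}_k$. Fix $\tau\in\mathbb{S}_k$ and, for $\pi\sim\Sn$, set $Y_\tau\eqdef\pc{\tau}{\pi}$; since the relative order of any fixed $k$ positions of a uniform $\pi$ is itself uniform over $\mathbb{S}_k$, one has $\E[Y_\tau]=\binom{n}{k}/k!$. I would first transfer the quoted CLT to the scale $n^{k-1/2}$ using
\[
    \frac{Y_\tau-\binom{n}{k}/k!}{n^{k-1/2}}=\frac{\binom{n}{k}}{n^{k}}\cdot\sqrt{n}\left(\frac{Y_\tau}{\binom{n}{k}}-\frac{1}{k!}\right),
\]
since $\binom{n}{k}/n^{k}\to1/k!$ while the second factor converges in distribution to $N(0,\sigma_\tau)$, Slutsky's theorem gives that $\bigl(Y_\tau-\binom{n}{k}/k!\bigr)/n^{k-1/2}$ converges in distribution to a centered Gaussian, say with standard deviation $c_\tau>0$ (an explicit constant determined by $\sigma_\tau$ and $k$). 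As Gaussian distribution functions are continuous, this yields, for each fixed $\alpha>0$,
\[
    \Pr_{\pi\sim\Sn}\!\Bigl[\bigl|\pc{\tau}{\pi}-\tbinom{n}{k}/k!\bigr|\ge\alpha\,n^{k-1/2}\Bigr]=2\,\Phi\!\bigl(-\alpha/c_\tau\bigr)\pm o(1),
\]
where $\Phi$ is the standard normal c.d.f.; the constant recorded in the proposition is the value of $c_\tau$ for the extremal pattern.

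With this per-pattern estimate the two displays follow. Put $c^{\ast}\eqdef\max_{\tau\in\mathbb{S}_k}c_\tau$. For the anti-concentration bound, $\delta_{\pi,k}\ge\bigl|\pc{\tau}{\pi}-\binom{n}{k}/k!\bigr|$ for \emph{every} fixed $\tau$, so the event $\{\delta_{\pi,k}\ge\alpha n^{k-1/2}\}$ contains the single-pattern event, and choosing $\tau$ with $c_\tau=c^{\ast}$ gives $\Pr[\delta_{\pi,k}\ge\alpha n^{k-1/2}]\ge 2\Phi(-\alpha/c^{\ast})-o(1)$. I would present the first display in this form: the reverse inequality would require the maximum over $\mathbb{S}_k$ to be dominated, almost surely, by one fixed pattern, whereas the limiting Gaussian vector indexed by $\mathbb{S}_k$ is genuinely higher-dimensional (its first-order Hoeffding projections are not collinear), so equality cannot hold for all $\alpha$. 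For the concentration bound, a union bound over the $k!$ patterns, together with $2\Phi(-\alpha/c_\tau)\le 2\Phi(-\alpha/c^{\ast})$ for every $\tau$, gives $\Pr[\delta_{\pi,k}\ge\alpha n^{k-1/2}]\le 2k!\,\Phi(-\alpha/c^{\ast})+o(1)$; taking complements yields the second display. Finally, since $2k!\,\Phi(-\alpha/c^{\ast})\to0$ as $\alpha\to\infty$ and $2\Phi(-\alpha'/c^{\ast})\to1$ as $\alpha'\to0^{+}$, choosing a large constant $\alpha$ and a small constant $\alpha'$ gives $\Pr[\,\alpha'n^{k-1/2}\le\delta_{\pi,k}\le\alpha n^{k-1/2}\,]>0.99$ for all large $n$, which is the $\Theta(n^{k-1/2})$ statement invoked in \Cref{section:introduction} and compared there with \Cref{thm:lowerbound_distance}.

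The only real input is the pattern-count CLT, which I would take off the shelf (\cite{janson2013asymptotic, hofer2018central, bona2007copies}); everything else is Slutsky, a union bound, and elementary asymptotics of $\binom{n}{k}$. The main obstacle is getting the constant $c^{\ast}$ inside $\Phi$ sharp: this requires identifying the pattern of largest limiting variance (the monotone pattern $(1,2,\dots,k)$ is the natural candidate) and computing the variance of its first-order ANOVA/Hoeffding projection $\sum_i h_\tau\!\bigl(i/n,\pi(i)/n\bigr)$, plus a uniform bound on all the $c_\tau$ for the union step. If one is content with the order $\Theta(n^{k-1/2})$ — which is all that the applications and the comparison with \Cref{thm:lowerbound_distance} use — this is avoidable: it suffices that $\Var\!\bigl(\pc{\tau}{\pi}\bigr)=\Theta(n^{2k-1})$ with positive, $\tau$-dependent leading constants, so that Chebyshev gives the concentration direction and a second-moment argument (or the CLT, supplying non-degeneracy) gives the anti-concentration direction.
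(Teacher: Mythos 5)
The paper does not spell out a proof here---it states the proposition as a direct consequence of the cited CLT, adding only the sentence ``This CLT implies asymptotic concentration and anti-concentration of $k$-profiles.'' Your argument is exactly the natural elaboration: Slutsky to rescale the deviation from $\binom{n}{k}$ to $n^{k-1/2}$, a per-pattern tail estimate, monotonicity of the maximum for anti-concentration, and a union bound over $\mathbb{S}_k$ for concentration. This is the intended route and you have filled in all the missing steps.

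Your two caveats are on target and show that the proposition, read literally, is stated too strongly. The equality in the first display cannot hold: $\delta_{\pi,k}$ is the maximum of $k!$ pattern deviations whose joint limit is a genuinely multi-dimensional Gaussian (they sum to zero, so the limiting vector lies in a $(k!-1)$-dimensional hyperplane), and the tail of the maximum strictly exceeds the tail of any single coordinate unless all coordinates are collinear, which they are not; only ``$\ge$'' is available, which is all the downstream applications require. As for the constant $\alpha/k!$ inside $\Phi$: you are right that pinning it down needs the limiting variances, and in fact the stated constant appears to be off. Already for $k=2$ the variance of the inversion count is $\frac{2n^3+3n^2-5n}{72}=\frac{n^3}{36}+O(n^2)$, so $\sqrt n\bigl(X_{\mathtt{21}}/\binom n2-\tfrac12\bigr)$ has limiting standard deviation $\tfrac13$ and the per-pattern tail is $2\Phi(-6\alpha)$, not $2\Phi(-\alpha/2)$. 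None of this affects the only consequence the paper actually uses---the $\Theta(n^{k-1/2})$ order of the distance---and your fallback via Chebyshev for concentration plus the CLT for non-degeneracy on the anti-concentration side delivers that order cleanly without needing sharp constants.
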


In particular, this implies that $\delta_{\pi, k} \in \left(\frac{1}{100 k!},\ 2 k!\right)n^{k-1/2}$ with probability $> 99\%$ as $n \to \infty$.

\section{An Asymptotic Relation Between Profiles and Permutations}
\label{section:profiles_to_perms}
So far we have considered $k$-profiles of order-$n$ permutations with $k > 1$ \textit{fixed}, and $n \to \infty$.
But such profiles are interesting also when $k=k(n)$ grows with $n$.
For example, as observed at the start of \Cref{section:4balanced_nonex}, when $k \gtrsim e \sqrt{n}$,
at least some order-$k$ permutations must be missing, i.e., the support of the $k$-profile is necessarily incomplete.
Also, in the extreme case where $k=n$, the $k$-profile is a singleton. Our main discovery in this section 
is the following:

\paragraph{Profiles determine points.} In the range 
$n\ge k(n) \ge \Omega(\sqrt{n} \log n)$, the $k$-profile of $\pi\in\Sn$ 
reveals a lot about $\pi$.
Explicitly, we prove that there exists a set $\mathcal{D} \subset [n]^2$ of 
$\widetilde{\Omega}(k^4/n^2)$ points
(consisting of four symmetric regions, of widths roughly $k^2/n$),
such any two permutations in $\Sn$ with the same $k$-profile,
\textit{must} agree on their restriction to $\mathcal{D}$ 
(see \Cref{fig:profiles_to_points}).
In the extreme case where $k=n$, our Theorem is close to tight,
as the set $\mathcal{D}$ nearly covers the entire grid $[n] \times [n]$ (up to a logarithmic factor).

\begin{figure}[H]
    \centering
    \begin{tikzpicture}[scale=1]
        \draw (0,0) -- (4,0) -- (4,4) -- (0,4) -- (0,0);
        \draw[color=lightgray, opacity=0.5] (0,2) -- (4,2);
        \draw[color=lightgray, opacity=0.5] (2,0) -- (2,4);
        
        \def\BL{(0,0.9) -- (0.7,0.9) arc (90:0:0.2cm) -- (0.9,0) -- (0, 0) -- (0, 0.9)}
        \draw[pattern=hatch, pattern color=red, opacity=0.8] \BL;

        \def\TL{(0,3.1) -- (0.7,3.1) arc (-90:0:0.2cm) -- (0.9,4) -- (0, 4) -- (0, 3.1)}
        \draw[pattern=hatch, pattern color=red, opacity=0.8] \TL;

        \def\TR{(4,3.1) -- (3.3,3.1) arc (-90:-180:0.2cm) -- (3.1,4) -- (4, 4) -- (4, 3.1)}
        \draw[pattern=hatch, pattern color=red, opacity=0.8] \TR;

        \def\BR{(4,0.9) -- (3.3,0.9) arc (90:180:0.2cm) -- (3.1,0) -- (4, 0) -- (4, 0.9)}
        \draw[pattern=hatch, pattern color=red, opacity=0.8] \BR;

        \draw[color=blue, dashed] (0.9,4) -- (0.9,0);
        \draw[color=blue, dashed] (3.1,4) -- (3.1,0);
        \draw[color=blue, dashed] (0, 0.9) -- (4, 0.9);
        \draw[color=blue, dashed] (0, 3.1) -- (4, 3.1);

        \draw[<->] (1.2,3.1) -- (1.2,4) node[right, midway]{\small $\widetilde{\Omega}\left(\sfrac{k^2}{n}\right)$};
        \draw[->, gray] (-0.2,-0.2) -- (-0.2, 4.3) node[above, darkgray] {$x$};
        \draw[->, gray] (-0.2,-0.2) -- (4.3, -0.2) node[right, darkgray] {$y$};

        \draw[color=gray!30] (0,0) -- (0,-0.3) node[below,color=darkgray] {\small{$1$}};
        \draw[color=gray!30] (4,0) -- (4,-0.3) node[below,color=darkgray] {\small{$n$}};

        \draw[color=gray!30] (0,0) -- (-0.3,0) node[left,color=darkgray] {\small{$1$}};
        \draw[color=gray!30] (0,4) -- (-0.3, 4) node[left,color=darkgray] {\small{$n$}};
        
    \end{tikzpicture}
    \caption{The $k$-profile fully determines the restriction of any $\pi \in \Sn$ with that profile, to the red region.}
	\label{fig:profiles_to_points}
\end{figure}
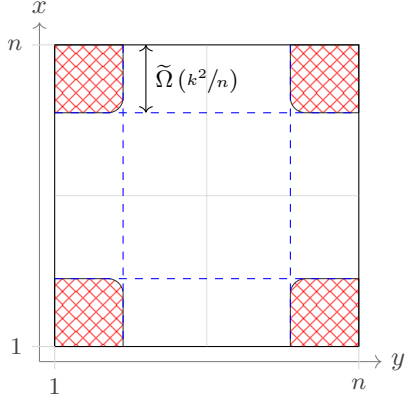

The main result of this section is \Cref{thm:prof_to_points}. Its proof goes
as follows: We define the {\em evaluation} $p(\pi)$ of a bivariate polynomial $p$ over a permutation $\pi$. Then we show that if $\deg(p)<k$, then this real number $p(\pi)$ is uniquely defined by the $k$-profile of the permutation $\pi$. We subsequently use standard tools from approximation theory to construct a family of polynomials of degree $<k$, which allow us to uncover the points in $\mathcal{D}$.

\subsection{\texorpdfstring{$k$}{k}-Profiles Determine the Evaluation of Degree \texorpdfstring{$< k$}{<k} Polynomials}
\label{subsect:k_prof_determines_deg_k_polys}

Here is the main notion that we use in this section:

\begin{notation}[evaluation of polynomial on permutation]
    \label{notn:eval_poly_perm}
    Let $p \in \RR[x,y]$ be a real bivariate polynomial and let $\pi \in \Sn$ be a permutation. The evaluation of $p$ on $\pi$ is denoted:
    \[
        p(\pi) \eqdef \sum_{i=1}^n p(i, \pi(i))
    \]    
\end{notation}  

With this notation we show:

\begin{proposition}[$k$-profile determines $(\deg < k)$-evaluations]
    \label{prop:k_prof_to_poly}
    Let $p \in \RR[x,y]$ be a bivariate polynomial with $\deg(p) < k$ for some integer $k > 1$. Also, let $\pi \in \Sn$ be a permutation of order $n\ge k$. Then $p(\pi)$ can be efficiently computed, given $\pi$'s $k$-profile. 
\end{proposition}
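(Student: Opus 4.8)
The key observation is that $p(\pi) = \sum_i p(i,\pi(i))$ is a linear functional of $p$, so it suffices to prove the claim for each monomial $x^a y^b$ with $a + b < k$ separately, and then combine by linearity. So fix such a monomial and let $S(\pi) \eqdef \sum_{i=1}^n i^a \pi(i)^b$. The plan is to express $S(\pi)$ as a fixed linear combination (with coefficients depending only on $n$, $a$, $b$, but not on $\pi$) of the pattern counts $\pc{\tau}{\pi}$ for $\tau \in \mathbb{S}_k$.

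The mechanism is essentially the same ``random sampling / total probability'' argument used in the proof of \Cref{lemma:relation_2_3_4_profile}. Think of the index $i$ as a point $(i,\pi(i))$ in the grid. Using the identity $i^a = \sum_{j=0}^a c_{a,j}\binom{i-1}{j}$ for suitable integer coefficients $c_{a,j}$ (and likewise $\pi(i)^b = \sum_{j} c_{b,j}\binom{\pi(i)-1}{j}$), one rewrites $i^a\pi(i)^b$ as a linear combination of products $\binom{i-1}{c}\binom{\pi(i)-1}{d}$ with $c+d \le a+b < k$. Now $\binom{i-1}{c}\binom{\pi(i)-1}{d}$ counts exactly the number of ways to choose $c$ indices strictly to the left of $i$ together with $d$ indices whose $\pi$-values are strictly below $\pi(i)$; summing over $i$, this counts configurations of $1 + c + d \le k$ points (one ``pivot'' plus $c$ points constrained in the $x$-direction and $d$ points constrained in the $y$-direction, with no constraint tying the two groups together). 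Crucially, the $c$ left-points and $d$ low-points need not be distinct from each other, so one stratifies by how many of them coincide; after this stratification each stratum is a sum, over size-$m$ subsets with $m \le k$, of an indicator that the induced pattern lies in a prescribed subset $T \subseteq \mathbb{S}_m$ determined only by $a,b,c,d$ and the stratification data. That sum is exactly $\sum_{\tau \in T}\pc{\tau}{\pi}$. Finally, by \Cref{prop:down_ind_k_prof}, each $\pc{\tau}{\pi}$ for $\tau \in \mathbb{S}_m$ with $m < k$ is itself a fixed linear combination of $\{\pc{\sigma}{\pi}\}_{\sigma \in \mathbb{S}_k}$ (dividing \eqref{eq:induce} by $\binom{n-m}{k-m}$, which is nonzero since $n \ge k$), so everything is pushed up to a linear combination of the $k$-profile. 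Tracking coefficients shows they depend only on $n, a, b$; hence $p(\pi)$ is determined by, and efficiently computable from, $\mathcal{P}_k(\pi)$.

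The main technical point — the ``hard part'' — is the bookkeeping in the stratification step: when we pick $c$ points to the left of the pivot and $d$ points below it, an arbitrary subset of these may coincide (a point can simultaneously be left-of and below the pivot), and for the non-coinciding ones their mutual left/right and above/below relationships are unconstrained, so each combinatorial ``type'' contributes a different subset $T$ of $\mathbb{S}_m$ and a different multiplicity. One must argue that summing the indicator over all subsets of the appropriate size, weighted correctly, recovers $\binom{i-1}{c}\binom{\pi(i)-1}{d}$ exactly — this is a finite inclusion-type identity that needs to be stated cleanly rather than verified case-by-case. Everything else (the binomial-basis change for monomials, the division by $\binom{n-m}{k-m}$, and the linearity assembly) is routine, and the ``efficiently computable'' clause follows immediately since the number of patterns involved is $O(k!)$ and all coefficients are explicit.
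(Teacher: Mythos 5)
Your proposal is essentially the same argument as the paper's, differing only in cosmetic presentation: the paper samples $t$ points for the $x$-constraint and $r-t$ for the $y$-constraint i.i.d.\ and conditions on equalities (which yields the power sum $\sum_i i^t\pi(i)^{r-t}$ directly, with no need for the binomial-basis change), whereas you count without replacement and therefore first convert $i^a\pi(i)^b$ into combinations of $\binom{i-1}{c}\binom{\pi(i)-1}{d}$; both versions then stratify by coincidences and push lower-order profiles up to the $k$-profile via \Cref{prop:down_ind_k_prof}. The ``hard part'' you flag (the stratification identity) is the same step the paper also leaves implicit, pointing to the worked example in \Cref{lemma:relation_2_3_4_profile}.
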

\begin{proof}
    For any two integers $1 < t < r < k$, consider the following event, in which all indices are sampled uniformly at random and independently from $\{1, \dots, n\}$:
    \begin{align*}
        \Pr_{\substack{x_1, \dots, x_t\\ y_1, \dots, y_{r-t} \\ i}} \hspace{-0.2cm}\big[ x_1, \dots, x_t \le i,y_1, \dots, y_{r-t} \le \pi(i) \big] &= \sum_{z=1}^n \Pr_{\substack{x_1, \dots, x_t \\ y_1, \dots, y_{r-t}}} \hspace{-0.1cm}\big[ x_1, \dots, x_t \le z, y_1, \dots, y_{r-t} \le \pi(z) \big] \Pr_{i} [i=z] \\
        &= \frac{1}{n} \sum_{z=1}^n \Pr_{x_1, \dots, x_t} \big[ x_1, \dots, x_t \le z] \Pr_{y_1, \dots, y_{r-t}} \big[ y_1, \dots, y_{r-t} \le \pi(z) \big] \\
        &= \frac{1}{n} \sum_{z=1}^n z^r \cdot \pi(z)^{t-r} = \frac{1}{n} (x^r y^{t-r})(\pi)
    \end{align*}
    where the first equality follows from the law of total probability, and the latter ones by independence. Conversely, by conditioning on the possible equalities between the sampled indices, the same event can be expressed as a weighted sum of permutation patterns over $(\le k)$-points (see proof of \Cref{lemma:relation_2_3_4_profile}). By \Cref{prop:down_ind_k_prof}, fixing the $k$-profile determines all $r$-profiles, where $r < k$. This proves the proposition for $p(x,y)=x^t y^{r-t}$, and the proof now follows, since these monomials span all bivariate polynomials of degree $<k$.\footnote{Since permutations are bijections from $[n] \to [n]$, every coordinate appears exactly once. Therefore, every $\pi \in \Sn$ must agree on the evaluations of all polynomials in which either $x$ or $y$ do not appear, regardless of the total degree.}   
\end{proof}

\begin{rem}
    \Cref{prop:k_prof_to_poly} can be extended by analysing a different sets of events. For example, for any $r < k$ and $\tau \in \mathbb{S}_r$, we could consider the event in which we sample $r$ permutation points, and condition on their relative ordering so that they form an instance of $\tau$ in $\pi$. Then, using the remaining budget of at most $k-r$ points, we could sample from their marginals. Such events determine the evaluations of many more polynomials (albeit, on a modified and weighted pointset).
\end{rem}

\subsection{Determining Points using Approximate Indicators}
\label{subsect:approx_indicators_and_points}

Here is our method for ``reading the bit'' at position $(x,y)$. We construct
to this end a \emph{low-degree} polynomial that is a good 
pointwise approximator of the indicator $\mathbbm{1}_{(x,y)}: [n]^2 \to \{0,1\}$.
If the polynomial has degree $<k$ and the approximation error is small, then by
evaluating it, we can determine the value of the corresponding bit. This means that either
\emph{every} permutation $\pi$ with a given $k$-profile must contain this point,
or \emph{none} of them do, and the evaluation of the polynomial will reveal this. 

For notational convenience, in what follows we consider 
(as in \Cref{section:3balanced_construction}) the action 
$\langle r \rangle \acts [1,n]^2$ of the $90^\circ$-rotation. We denote by 
$O(a,b) = \{ (a,b), (b, n+1-a), (n+1-a, n+1-b), (n+1-b, a)\}$ the $r$-orbit of 
$(a,b) \in [n]^2$. Similarly, for a set $\mathcal{D} \subset [n]^2$ we denote $O(\mathcal{D}) \eqdef \cup_{(a,b) \in \mathcal{D}} O(a,b)$ (i.e., the $r$-orbit of $\mathcal{D}$). The following fact is well-known and easy to verify (e.g., \cite{minsky1969perceptron}). 

\begin{lemma}[symmetrisation]
    \label{lemma:mp_sym}
    Let $p : \{0,1\}^n \to \RR$ be a real multilinear polynomial, and let $f: \{0, 1, \dots, n\} \to \RR$ be the function:
    \[
        \forall k \in \{0,1, \dots n\}: f(k) = \E_{|x| \sim k}[p(x)]
    \]
    where the expectation is taken with respect to the uniform distribution over all $x \in \{0,1\}^n$ of Hamming weight $k$. Then, $f$ can be written as a real polynomial in $k$ of degree at most $\deg(p)$.
\end{lemma}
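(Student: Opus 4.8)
\textbf{Proof plan for \Cref{lemma:mp_sym} (symmetrisation).}
The plan is to reduce the statement to the case where $p$ is a single monomial, and then to compute the relevant expectation by a counting argument. First I would use linearity of expectation: since $p$ is multilinear, write $p(x) = \sum_{S \subseteq [n]} c_S \prod_{i \in S} x_i$, so that $f(k) = \E_{|x| = k}[p(x)] = \sum_{S} c_S \E_{|x|=k}\big[\prod_{i \in S} x_i\big]$. It therefore suffices to show that for each fixed $S$, the map $k \mapsto \E_{|x|=k}[\prod_{i \in S} x_i]$ is a polynomial in $k$ of degree at most $|S| \le \deg(p)$; summing these with the coefficients $c_S$ then yields a polynomial in $k$ of degree at most $\deg(p)$.

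Next I would evaluate that expectation explicitly. For a fixed set $S$ with $|S| = d$, the product $\prod_{i \in S} x_i$ is the indicator of the event that all coordinates indexed by $S$ equal $1$. Under the uniform distribution on weight-$k$ strings, this event has probability equal to the number of weight-$k$ strings that are $1$ on all of $S$, divided by the total number of weight-$k$ strings; that is, $\binom{n-d}{k-d} / \binom{n}{k}$. A short simplification gives
\[
    \E_{|x| = k}\Big[\prod_{i \in S} x_i\Big] = \frac{\binom{n-d}{k-d}}{\binom{n}{k}} = \frac{k(k-1)\cdots(k-d+1)}{n(n-1)\cdots(n-d+1)},
\]
valid for all $k \in \{0, 1, \dots, n\}$ (when $k < d$ the falling factorial in the numerator vanishes, matching the fact that no such string exists). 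The right-hand side is, for fixed $n$ and $d$, a polynomial in $k$ of degree exactly $d$.

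Finally I would assemble the pieces: $f(k) = \sum_{S} c_S \cdot \frac{(k)_{|S|}}{(n)_{|S|}}$, a linear combination of polynomials in $k$ each of degree $|S| \le \deg(p)$, hence itself a polynomial in $k$ of degree at most $\deg(p)$. This completes the proof. I do not anticipate a genuine obstacle here — the only point requiring a small amount of care is the boundary behaviour (the cases $k < d$ and the degenerate identification of $\prod_{i\in\emptyset} x_i$ with the constant $1$), and verifying that the falling-factorial formula handles these uniformly, which it does.
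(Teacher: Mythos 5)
Your proof is correct. The paper does not actually prove this lemma — it cites it as a well-known fact from Minsky--Papert — and your argument (decompose $p$ into monomials by multilinearity, compute $\E_{|x|=k}\big[\prod_{i\in S}x_i\big] = \binom{n-|S|}{k-|S|}/\binom{n}{k} = (k)_{|S|}/(n)_{|S|}$, and observe this is a degree-$|S|$ polynomial in $k$) is exactly the standard proof, with the boundary cases handled correctly via the vanishing of the falling factorial.
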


\begin{lemma}[approximate degree of symmetric boolean functions \cite{paturi1992degree}]
    \label{lemma:apx_deg_sym}
    Let $f: \{0,1\}^n \to \{0,1\}$ be a symmetric Boolean function, and let:
    \[
        \Gamma(f) = \min_k \{ |2k - n + 1| : f_{k} \ne f_{k+1} \}
    \]
    where $f_k$ is the value of $f$ on inputs of Hamming weight $k$. Then, there exists a multilinear polynomial $g \in \RR[x_1, \dots, x_n]$ such that $\forall x \in \{0,1\}^n: |g(x) - f(x)| \le 1/3$, and furthermore:
    \[
        \deg(g) \le A \cdot \sqrt{n \left( n - \Gamma(f) \right) }
    \]
    where $A > 0$ is a universal constant. 
\end{lemma}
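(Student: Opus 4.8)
The plan is to reduce the statement to a one-dimensional approximation problem and then settle it with the classical Chebyshev-polynomial construction due to Paturi \cite{paturi1992degree}. First I would record the reduction in the direction we need. Writing $f_k$ for the common value of $f$ on inputs of Hamming weight $k$, it suffices to produce a univariate real polynomial $p\in\RR[z]$ with $\deg p\le A\sqrt{n(n-\Gamma(f))}$ and $|p(k)-f_k|\le 1/3$ for all integers $k\in\{0,1,\dots,n\}$: given such a $p$, the polynomial $g(x)\eqdef p(x_1+\dots+x_n)$ agrees on $\{0,1\}^n$ with its multilinearisation $\tilde g$ (obtained by iterating the substitution $x_i^2=x_i$, which never raises the total degree), so $\tilde g$ is multilinear, $\deg\tilde g\le\deg p$, and $|\tilde g(x)-f(x)|=|p(|x|)-f_{|x|}|\le 1/3$ on the cube. (\Cref{lemma:mp_sym} supplies the converse direction, so the two problems are in fact equivalent, but only this direction is used.) We may also assume $f$ is non-constant (otherwise $g=f$ works), and that $n-\Gamma(f)\le n/4$ — for larger $n-\Gamma(f)$ the exact multilinear representation of $f$, of degree $\le n$, already satisfies the bound after absorbing a constant into $A$.

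Second, I would exploit the jump structure. Set $\ell\eqdef n-\Gamma(f)\in[1,n/4]$. By definition of $\Gamma(f)$, every $k$ with $f_k\ne f_{k+1}$ satisfies $|2k-n+1|\ge n-\ell$, hence lies in $\{0,\dots,\ell\}\cup\{n-\ell,\dots,n-1\}$; so $(f_k)$ is constant, equal to some $c\in\{0,1\}$, on the middle band $\{\ell+1,\dots,n-\ell-1\}$, and the target degree is $O(\sqrt{n\ell})$. Put $h\eqdef f-c$; this is a $\{0,\pm1\}$-valued function on $\{0,\dots,n\}$ that vanishes on the middle band and is "supported" on the two end-segments $S_L=\{0,\dots,\ell\}$ and $S_R=\{n-\ell,\dots,n\}$. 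By the symmetry $k\leftrightarrow n-k$ it is enough to build a polynomial $q_L$ of degree $O(\sqrt{n\ell})$ that is within $\tfrac19$ of $h$ on $S_L$ and is at most $\tfrac19$ in absolute value on $\{\ell+1,\dots,n\}$, and symmetrically a $q_R$; then $p\eqdef c+q_L+q_R$ finishes the proof.

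Third -- and this is the heart of the matter -- I would build $q_L$ out of rescaled Chebyshev polynomials $T_d$. The governing phenomenon is that $T_d$ is bounded by $1$ on $[-1,1]$ but grows like $e^{\Theta(d\sqrt{\rho})}$ at $1+\rho$; mapping the long stretch $\{\ell+1,\dots,n\}$ onto $[-1,1]$ places $S_L$ just outside it, at distance $\rho\asymp\ell/n$, so with $d\asymp\sqrt{n\ell}$ one obtains a genuinely large amplification -- exactly the budget needed for a degree-$O(\sqrt{n\ell})$ polynomial to "see" the short segment $S_L$ while staying small on the long stretch. For a single jump (a threshold function) a direct rescaling of $T_d$ already furnishes such a $q_L$. \emph{The main obstacle is the general case}, where $h$ may have up to $\sim\ell$ jumps crowded near the endpoint: interpolating $h$ on all of $S_L$ and then damping is too costly (the interpolation blow-up on the far points outpaces what a single Chebyshev factor can suppress), and a union bound over individual threshold approximators loses a $\sqrt{\log\ell}$ factor in the degree. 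Paturi's resolution is a more delicate construction that approximates the whole end-segment in one shot at degree $\Theta(\sqrt{n\ell})$; the careful bookkeeping this requires -- keeping the polynomial small on the $\Theta(n)$ far points while remaining faithful on the $\Theta(\ell)$ near points at this degree -- is precisely the step I would lean on \cite{paturi1992degree} for, rather than reprove from scratch.
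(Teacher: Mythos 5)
The paper does not prove this lemma; it is stated verbatim as a citation to Paturi \cite{paturi1992degree} and used as a black box in \Cref{lemma:approx_ind}. Your sketch correctly reconstructs the standard structure of Paturi's argument — the symmetrisation reduction to a univariate problem, the observation that $f$ is constant on a middle band of width roughly $n-2\ell$ with $\ell=n-\Gamma(f)$, the split into two endpoint approximators $q_L,q_R$, and the Chebyshev amplification heuristic $T_d(1+\rho)\approx e^{\Theta(d\sqrt\rho)}$ with $\rho\asymp\ell/n$ forcing $d\asymp\sqrt{n\ell}$ — and your bookkeeping (multilinearisation preserving degree, the $n-\Gamma(f)>n/4$ shortcut, the $1/9+1/9<1/3$ error budget) is all sound. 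But you explicitly defer the one genuinely nontrivial step — producing a single degree-$O(\sqrt{n\ell})$ polynomial that is simultaneously faithful on the $\Theta(\ell)$ endpoint values and uniformly small on the $\Theta(n)$-point complement, without the $\sqrt{\log\ell}$ loss from a union bound over thresholds — back to \cite{paturi1992degree}. Since that construction is the entire content of the lemma, your proposal is in effect the same move the paper makes (cite Paturi), supplemented with a correct and useful account of what the citation contains and why the naive interpolate-then-damp and threshold-union approaches fall short.
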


\begin{lemma}[one-sided approximation of $\mathbbm{1}_{(a,b)}$]
\label{lemma:approx_ind}
Let $u,v$ and $n$ be integers such that $1 \le u,v < n/2$. Then, for any $(a,b) \in O(u,v)$, there exists a polynomial $\widetilde{\mathbbm{1}}_{(a,b)} \in \RR[x,y]$ such that:
\[
    \deg \left(\widetilde{\mathbbm{1}}_{(a,b)}\right) \le C \left( \sqrt{n(2u+1)} + \sqrt{n(2v+1)}\right) \log n
\]
where $C > 0$ is an absolute constant, and:
\[
    \forall (x,y) \in [n]^2:\ \widetilde{\mathbbm{1}}_{(a,b)}(x,y) \in \begin{cases}
        [1, \infty) & x=a \land y=b \\
        [0, \frac{1}{2n}] & x \ne a \lor y \ne b
    \end{cases}
\]
\end{lemma}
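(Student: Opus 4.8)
The plan is to build $\widetilde{\mathbbm{1}}_{(a,b)}$ as a product of two one-variable approximators, one handling the $x$-coordinate and one handling the $y$-coordinate, and then symmetrise over the rotation orbit so that the construction applies to every $(a,b)\in O(u,v)$. First I would construct, for a target coordinate value $a$ with $1\le a<n/2$, a univariate polynomial $q_a\in\RR[x]$ of low degree such that $q_a(a)\ge 1$ while $q_a(x)\in[0,\varepsilon]$ for all $x\in[n]\setminus\{a\}$. The natural way to get this is to view the ``equals $a$'' test as a symmetric Boolean function: consider the function on $\{0,1\}^n$ that is $1$ exactly on inputs of Hamming weight $a$ (equivalently, the function $f$ with $f_a=1$ and $f_k=0$ otherwise). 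Its jump parameter $\Gamma(f)$ is governed by the nearest weight to $a$ where the value changes, i.e.\ $|2a-n+1|$ and $|2a-n-1|$, so $n-\Gamma(f)=\Theta(a)$ since $a<n/2$. Applying \Cref{lemma:apx_deg_sym} gives a multilinear $g$ with $|g(x)-f(x)|\le 1/3$ and $\deg(g)\le A\sqrt{n(n-\Gamma(f))}=O(\sqrt{n(2a+1)})$; then by \Cref{lemma:mp_sym} the symmetrised function $x\mapsto\E_{|z|\sim x}[g(z)]$ is a univariate polynomial of the same degree that approximates the indicator of $\{a\}$ on $\{0,1,\dots,n\}$ to within $1/3$ pointwise.

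Next I would amplify this $1/3$-approximation down to error $\le \tfrac{1}{4n}$ (say) using a standard amplification polynomial: compose with a fixed low-degree polynomial $R$ (e.g.\ a suitable iterate or a Chebyshev-type polynomial) that maps $[2/3,4/3]$ into a neighbourhood of $1$ and $[-1/3,1/3]$ into a neighbourhood of $0$, shrinking the error geometrically. Driving the error below $\tfrac{1}{4n}$ requires $O(\log n)$ rounds, which multiplies the degree by an $O(\log n)$ factor, yielding $\deg(q_a)=O(\sqrt{n(2a+1)}\log n)$; after a final affine rescaling we may assume $q_a(a)\ge 1$ and $0\le q_a(x)\le\tfrac{1}{4n}$ for $x\ne a$, $x\in[n]$ (note all evaluations of interest lie in $[n]$, so we only need control there). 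I would do the same for the $y$-coordinate to get $q_b\in\RR[y]$ with $\deg(q_b)=O(\sqrt{n(2b+1)}\log n)$ and the analogous guarantees. Setting $\widetilde{\mathbbm{1}}_{(a,b)}(x,y)\eqdef q_a(x)\,q_b(y)$ gives a bivariate polynomial of degree $\deg(q_a)+\deg(q_b)=O\big((\sqrt{n(2a+1)}+\sqrt{n(2b+1)})\log n\big)$; at $(a,b)$ it is $\ge 1$, and at any $(x,y)$ with $x\ne a$ or $y\ne b$ at least one factor is $\le\tfrac{1}{4n}$ while the other is bounded (it is $\le 1+\tfrac{1}{4n}$ on $[n]$ away from its peak, and $\ge1$ only at its peak; in all cases the product is $\le\tfrac{1}{2n}$), which is exactly the stated two-case bound.

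Finally, to pass from $(u,v)$ to an arbitrary $(a,b)\in O(u,v)$, I would use the rotation action directly: since the four points of $O(u,v)=\{(u,v),(v,n+1-u),(n+1-u,n+1-v),(n+1-v,u)\}$ each have \emph{both} coordinates in $\{u,v,n+1-u,n+1-v\}$, and reflecting a coordinate $c\mapsto n+1-c$ is an affine change of variable that does not change degree, the construction above applies verbatim to each such point — the relevant degree bound for the coordinate $n+1-u$ is still $O(\sqrt{n(2u+1)}\log n)$ because $n-\Gamma$ of the ``equals $n+1-u$'' test is again $\Theta(u)$ (the distance from $n+1-u>n/2$ to the extreme weight $n$ is $u-1$). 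So for every $(a,b)\in O(u,v)$ the polynomial $\widetilde{\mathbbm{1}}_{(a,b)}=q_a(x)q_b(y)$ has degree $O\big((\sqrt{n(2u+1)}+\sqrt{n(2v+1)})\log n\big)=C(\sqrt{n(2u+1)}+\sqrt{n(2v+1)})\log n$ for an absolute constant $C$, as claimed.

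The main obstacle I anticipate is bookkeeping the constants and the direction of the approximation cleanly: \Cref{lemma:apx_deg_sym} only gives a two-sided $1/3$-approximation with no sign guarantee, so some care is needed in the amplification step to ensure the \emph{one-sided} conclusion (value $\ge 1$ at the point, value in $[0,\tfrac{1}{2n}]$ elsewhere on the grid) — in particular the amplifier must push the ``close to $0$'' branch to a \emph{nonnegative} value below $\tfrac{1}{4n}$, not merely close to $0$, and must keep the ``close to $1$'' branch at least $1$; this is routine with an explicit amplifying polynomial but must be stated precisely. A secondary point is confirming that the $O(\log n)$ amplification rounds suffice and that the product of the two single-variable error bounds (together with the at-most-$(1+\tfrac{1}{4n})$ bound on the non-peak factor) really lands inside $[0,\tfrac{1}{2n}]$ rather than something slightly larger, which just fixes the constant in front of the $\log n$.
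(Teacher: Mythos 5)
Your proposal follows essentially the same route as the paper. Both approaches use the same two ingredients — Lemma~\ref{lemma:apx_deg_sym} to get a low-degree multilinear approximator of the symmetric Boolean indicator $\mathbbm{1}\{|x|=t\}$, and Lemma~\ref{lemma:mp_sym} to symmetrise it into a univariate polynomial of the same degree — and both then combine univariate approximators for the $x$- and $y$-coordinates into a bivariate polynomial with an $O(\log n)$-round amplification to drive the off-point error below $\frac{1}{2n}$. The only structural difference is the order of operations: you amplify each coordinate factor separately and then multiply, whereas the paper multiplies the two shifted factors $\bigl(f_a(x)+\tfrac13\bigr)\bigl(f_b(y)+\tfrac13\bigr)$ first and then raises the product to a $\Theta(\log n)$ power. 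The $+\tfrac13$ shift is precisely the ``explicit amplifier'' you anticipate needing: it moves the near-zero branch into $[0,\tfrac23]$ and the near-one branch into $[1,\tfrac43]$, after which exponentiation preserves both nonnegativity and the $\ge 1$ floor, which is exactly the one-sidedness you flagged. One small caveat to tighten in your version (and which also needs a slight adjustment of the exponent base in the paper's): when you multiply the amplified univariate factors, the ``good'' factor can sit strictly above $1$ after powering, so the product at a point like $(a,y)$ with $y\ne b$ is not automatically bounded by the small factor alone; you should either amplify to error somewhat below $\frac{1}{4n}$ (absorbing the polynomial growth of the peak value), or cap the peak, or — as the paper does — take the product before powering so that any off-point product of shifted factors is already bounded below $1$ and shrinks under exponentiation. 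All of this is constant-chasing and does not affect the $\Theta(\log n)$ overhead or the final degree bound.
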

\begin{proof}
    For any $t \in [n]$, let $H_t: \{0,1\}^n \to \{0,1\}$ be the symmetric Boolean function $H_t(x) \eqdef \mathbbm{1}\{ |x| = t \}$, where $|x|$ is the Hamming weight of $x \in \{0,1\}^n$. By construction, $\Gamma(H_t) \in |2t - n \pm 1|$. An application of \Cref{lemma:apx_deg_sym} to $H_t$, gives a real multilinear polynomial $G_t \in \RR[x_1, \dots, x_n]$ such that $\forall x \in \{0,1\}^n: | H_t(x) - G_t(x) | \le 1/3$, and whose degree is bounded by $A (n(n - \Gamma(H_t))^{1/2}$ where $A > 0$ is an absolute constant (independent of $n,t$). Consider $f_t: \{0, 1, \dots, n\} \to \RR$, the symmetrisation of $G_t$. By \Cref{lemma:mp_sym}, $f_t$ is a \emph{univariate polynomial} of degree at most $\deg(G_t)$. Since $H_t$ is constant over all inputs of the same Hamming weight, and $G_t$ approximates $H_t$ pointwise to error at most $1/3$, we have:
    \[
        \forall x \in [n]:\ \big| f_t(x) - \mathbbm{1}_t (x) \big| \le \frac{1}{3} \implies \left(f_t(x) + \frac{1}{3}\right) \in \begin{cases}
            [1, \frac{4}{3}] & x = t \\
            [0, \frac{2}{3}] & x \ne t
        \end{cases}
    \]
    To conclude the proof, let $(a,b) \in O(u,v)$ where $u,v < n/2$, and consider the following bivariate polynomial:
    \[
        \widetilde{\mathbbm{1}}_{(a,b)}(x,y) = \left[ \left(f_a(x) + \frac{1}{3}\right)\left(f_b(y) + \frac{1}{3}\right) \right]^{\log_{\sfrac{3}{2}}2n} \in \RR[x,y]
    \]
    Taking the products and powers of the aforementioned bounds on $f_t$, it follows that:
    \[
        \forall (x,y) \in [n]^2:\ \widetilde{\mathbbm{1}}_{(a,b)}(x,y) \in \begin{cases}
            [1, \infty) & x=a \land y=b \\
            [0, (\frac{2}{3})^{\log_{\sfrac{3}{2}}2n} = \frac{1}{2n}] & x \ne a \lor y \ne b 
        \end{cases}
    \]
    Lastly, by construction, the total degree of $\widetilde{\mathbbm{1}}_{(a,b)}$ is bounded by $A (\sqrt{n(2u+1)} + \sqrt{n(2v+1)}) \log_{\sfrac{3}{2}} (2n)$, and the proof now follows for an appropriate choice of $C$.
\end{proof}

\begin{theorem}[$k$-profiles determine points]
    \label{thm:prof_to_points}
    Let $n \ge k > 1$ and let:
    \[
        \mathcal{D} = \Big\{ (a,b) \in \left[ n/2 \right]^2\ :\ C \left( \sqrt{n(2a+1)} + \sqrt{n(2b+1)}\right) \log n < k \Big\} \subset [n]^2
    \]
    where $C$ is the constant of \Cref{lemma:approx_ind}.
    Then the $k$-profile of an order-$n$ permutation $\pi \in \Sn$ uniquely determines the restriction of $\pi$ to $O(\mathcal{D})$. 
\end{theorem}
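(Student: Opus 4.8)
The plan is to combine \Cref{lemma:approx_ind} with \Cref{prop:k_prof_to_poly} and the rotation-invariance of the setup. Fix a permutation $\pi \in \Sn$ and a point $(a,b) \in O(\mathcal{D})$; write $(a,b) \in O(u,v)$ for some $(u,v) \in \mathcal{D}$, so $1 \le u,v < n/2$ and $C(\sqrt{n(2u+1)} + \sqrt{n(2v+1)})\log n < k$. By \Cref{lemma:approx_ind} applied to $(a,b)$, there is a bivariate polynomial $\widetilde{\mathbbm{1}}_{(a,b)} \in \RR[x,y]$ of total degree at most $C(\sqrt{n(2u+1)} + \sqrt{n(2v+1)})\log n < k$, with $\widetilde{\mathbbm{1}}_{(a,b)}(x,y) \in [1,\infty)$ when $(x,y)=(a,b)$ and $\widetilde{\mathbbm{1}}_{(a,b)}(x,y) \in [0, \tfrac{1}{2n}]$ otherwise. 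Since $\deg(\widetilde{\mathbbm{1}}_{(a,b)}) < k$ and $\pi$ has order $n \ge k$, \Cref{prop:k_prof_to_poly} tells us that the real number $\widetilde{\mathbbm{1}}_{(a,b)}(\pi) = \sum_{i=1}^n \widetilde{\mathbbm{1}}_{(a,b)}(i,\pi(i))$ is determined by (and efficiently computable from) the $k$-profile $\mathcal{P}_k(\pi)$.

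Next I would show that this evaluation reveals whether the point $(a,b)$ lies on the graph of $\pi$. There are two cases. If $(a,b)$ is a point of $\pi$, i.e. $\pi(a)=b$, then the $i=a$ term of the sum contributes at least $1$, and all remaining $n-1$ terms are nonnegative, so $\widetilde{\mathbbm{1}}_{(a,b)}(\pi) \ge 1$. If $(a,b)$ is not a point of $\pi$, then every one of the $n$ terms satisfies $\widetilde{\mathbbm{1}}_{(a,b)}(i,\pi(i)) \in [0, \tfrac{1}{2n}]$ (as $(i,\pi(i)) \ne (a,b)$ for all $i$), so $\widetilde{\mathbbm{1}}_{(a,b)}(\pi) \le n \cdot \tfrac{1}{2n} = \tfrac12 < 1$. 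Hence the single bit ``$\pi(a)=b$?'' is determined by the sign of $\widetilde{\mathbbm{1}}_{(a,b)}(\pi) - 1$, which in turn is a function of $\mathcal{P}_k(\pi)$ alone. Consequently, any two permutations $\pi, \pi' \in \Sn$ with $\mathcal{P}_k(\pi) = \mathcal{P}_k(\pi')$ must agree on whether each $(a,b) \in O(\mathcal{D})$ is on their graph; since being ``on the graph'' of a permutation pins down the value at that coordinate, $\pi$ and $\pi'$ coincide on their restriction to $O(\mathcal{D})$.

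Finally I would verify that the rotation orbit causes no difficulty: \Cref{lemma:approx_ind} is stated for \emph{any} $(a,b) \in O(u,v)$, not merely for the representative in $[n/2]^2$, so the degree bound — which depends only on $u,v$ — holds uniformly over the whole orbit, and the argument above applies verbatim to every point of $O(\mathcal{D})$. (One could alternatively invoke the $D_4$-equivariance used in \Cref{section:3balanced_construction}, but it is cleaner to quote \Cref{lemma:approx_ind} directly.) The only genuinely delicate point is the degree accounting: one must make sure the strict inequality $C(\sqrt{n(2a+1)} + \sqrt{n(2b+1)})\log n < k$ defining $\mathcal{D}$ matches the degree bound coming out of \Cref{lemma:approx_ind} with room to spare for the ``$\le$'' there, which is exactly why $\mathcal{D}$ is defined with the constant $C$ of that lemma; modulo this bookkeeping the proof is a short assembly of the two preceding results. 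I expect the main obstacle to be purely expository — ensuring the inequalities in the two cases are strict in the right direction so that a single threshold test recovers the bit — rather than anything substantive.
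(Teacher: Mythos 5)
Your proposal is correct and follows essentially the same route as the paper's proof: apply \Cref{lemma:approx_ind} to get a degree-$<k$ one-sided approximator of $\mathbbm{1}_{(a,b)}$, invoke \Cref{prop:k_prof_to_poly} to deduce that its evaluation on $\pi$ is a function of the $k$-profile, and then observe that the evaluation is $\ge 1$ when $(a,b)$ lies on $\pi$ and $\le n\cdot\frac{1}{2n}=\frac12$ otherwise, so a threshold test recovers the bit. The only thing the paper adds is stating this more tersely; your two-case bound on the sum is exactly the paper's implicit reasoning.
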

\begin{proof}
    Let $(a,b) \in O(\mathcal{D})$ and let $\widetilde{\mathbbm{1}}_{(a,b)}$ be the one-sided approximation given by \Cref{lemma:approx_ind}. By construction, for every permutation $\pi \in \Sn$ we have $\widetilde{\mathbbm{1}}_{(a,b)}(\pi) \ge 1$ iff $(a,b) \in \{(i, \pi(i)) : i \in [n]\}$, and $\mathbbm{1}_{(a,b)}(\pi) \le (1/2n) \cdot n \le 1/2$ otherwise. So, this evaluation determines the presence or absence of $(a,b)$. From \Cref{lemma:approx_ind}, it follows that $\deg(\widetilde{\mathbbm{1}}_{(a,b)}) < k$, and thus (by \Cref{prop:k_prof_to_poly}), all permutations with a given $k$-profile must \emph{agree} on this polynomial, and on the coordinate $(a,b)$.
\end{proof}

\section{Discussion}
\label{sect:discussion}

In this paper we consider the existence of $k$-balanced permutations.
For $k \le 3$ we show that such permutations exist whenever $n$ satisfies the necessary divisibility conditions,
and for $k \ge 4$, we show that no such permutations exist.
Moreover, we prove that the $k$-profile of any $n$-element permutation must have an entry
which is $\Omega_n(n^{k-1})$ away from uniform, whenever $k \ge 4$.
This gives rise to several interesting open questions.

\paragraph{Is the lower bound tight?} 
Recall that for $k=4$ we provide an explicit construction of an infinite family (see \Cref{subsect:es_mod})
in which every pattern in $\mathbb{S}_4$ appears within additive distance of $\Theta(n^{3})$ from uniform, i.e., matching the lower bound of \Cref{thm:lowerbound_distance}.
Conversely, we note (see \Cref{subsubsect:random_perms}) that \textit{all} entries in the $k$-profile of a uniformly random permutation in $\mathbb{S}_n$ 
are, with probability $>99\%$ (for large enough $n$), within distance $\Theta(n^{k - 1/2})$ from uniform. In this view we ask
what is the true behaviour for $k>4$. Specifically,
does our lower bound remain tight, or does the true bound change to $\Omega(n^{k-1/2})$, as attained by the majority of permutations?

\paragraph{How many $k$-patterns can appear the right number of times?}
We have ruled out the possibility
that \textit{every} entry in the $4$-profile equals $\binom{n}{4}/4!$.
However, for any \textit{fixed} pattern $\tau \in \mathbb{S}_4$, we are able
to construct a bespoke infinite family of permutations, in whose members $\tau$ appears exactly $\binom{n}{4}/4!$ times
(these constructions are quite intricate, and are not included in this paper).
So, we ask: how \textit{many} entries in the $k$-profile of an $n$-element permutation may be precisely $\binom{n}{k}/k!$, simultaneously? 

\paragraph{What is the maximal dimension of a $k$-balanced subspace?} 
It makes sense to ask the same question with regards to linear subspaces.
That is, what is the maximal dimension of a subspace $V_k \le \RR^{\mathbb{S}_k}$ such that there exist infinitely many permutations $\pi \in \Sn$ for which:
\[
    \forall v = (\alpha_{\tau})_{\tau \in \mathbb{S}_k} \in V_k:\ \frac{\binom{n}{k}}{k!} \langle v, \mathbbm{1}_{\mathbb{S}_k} \rangle = \sum_{\tau \in \mathbb{S}_k} \alpha_\tau \pc{\tau}{\pi}
\]

In other words, unlike the previous question, here we allow any basis for $V_k$, not necessarily only the coordinate vectors.
Clearly $\langle \mathbbm{1}_{\mathbb{S}_k} \rangle \in V_k$, for any $k$. Also, since $3$-balanced permutations exist, then by \Cref{prop:down_ind_k_prof} there
are $3!=6$ linearly independent combinations in $\mathbb{S}_4$ that hold true, and $\dim(V_4) \ge 6$ ($\mathbbm{1}_{\mathbb{S}_k}$ resides in their span). In general, we ask: what is the maximal dimension of $V_k$, for $k \ge 4$?

\paragraph{How many permutations are $3$-balanced?} $2$- and $3$-balanced permutations \textit{exist} for every admissible value of $n$ (see \Cref{section:3balanced_construction}). In fact, they never appear ``alone'': as they are closed under the action of $D_4$, their entire orbit is also balanced and so there must at least be \textit{two} balanced permutations, whenever one exists (no permutation is identical to its reflection about either axis). Therefore, we ask: what is the \textit{exact count}, or even \textit{asymptotic growth rate}, of $3$-balanced permutations (restricted only to the admissible $n$)? We remark that for $2$-balanced permutations these answers are already known (see \cite[A316775]{oeis} and \cite[A000140]{oeis}). However, interestingly, for $k=3$ we presently only know that at $n=9$ there are exactly two \threeb permutations (see \Cref{fig:threebal1}).

\bibliography{balanced_perms}
\bibliographystyle{alpha}

\begin{appendices}
\crefalias{section}{appendix}
\pagebreak
\section{3-Balanced Constructions for all Remainders}
\label{section:other_3bal}

In the proof of \Cref{thm:3balanced_construction} we amended $\sigma$ by inserting two points, yielding \threeb permutations for every $n > 56$ with $n \equiv 20 \pmod {36}$. The same strategy applies to the other residues as well, as we now describe. In the following discussion $\ell = 3t+1$, and $0 < \varepsilon < 1$ is a small constant. 

To prove the correctness of these constructions, we observe that all newly inserted coordinates are affine transformations of $t$. Consequently, each $2$-pattern (resp.\ $3$-pattern) count in $\sigma$ is a quadratic (resp.\ cubic) polynomial in $t$. It follows that \Cref{eq:sigma3bal} posits the vanishing of a cubic polynomial. This we can verify by checking only four distinct values of $t$. This is an alternative to the calculation presented in \Cref{thm:3balanced_construction}.

\subsection{Constructions for even \texorpdfstring{$n$}{n}}
\label{subsect:constructions_even_n}

In \Cref{thm:3balanced_construction}, we amended $\sigma\in\mathbb{S}_m$ by inserting two additional points:

\addstackgap[7pt]{\begin{tabular}{p{1.5cm} p{5.5cm} p{5.5cm}}
& $(x_1,y_1) \eqdef (r+2+\varepsilon,\ r+\ell+\varepsilon)$ & $(x_2,y_2) \eqdef (r+\ell+\varepsilon,\ r-\varepsilon)$
\end{tabular}}

\noindent and showed that for any $t\geq 2$, one can set $r=4t+2$ to satisfy \Cref{eq:sigma3bal}. The size of the resulting \threeb permutation $\pi\in\mathbb{S}_n$ obtained by rotation is $n=4m=36t+20$, that is, $n \equiv 20 \pmod {36}$

\paragraph{The case $n \equiv 28 \pmod {36}$.} Insert two more points to $\sigma$, so that $m=3\ell+4$:

\addstackgap[7pt]{\begin{tabular}{p{1.5cm} p{5.5cm} p{5.5cm}}
& $(x_3,y_3) \eqdef (\varepsilon,\ \ell+\varepsilon)$ & $(x_4,y_4) \eqdef (1+\varepsilon,\ \ell-\varepsilon)$
\end{tabular}}

\paragraph{The case $n \equiv 0 \pmod {36}$. } Insert two more points, in addition to the above four:

\addstackgap[7pt]{\begin{tabular}{p{1.5cm} p{5.5cm} p{5.5cm}}
& $(x_5,y_5) \eqdef (\ell+2+\varepsilon,\ \varepsilon)$ & $(x_6,y_6) \eqdef (\ell+1+\varepsilon,\ 5+\varepsilon)$
\end{tabular}}

\subsection{Constructions for odd \texorpdfstring{$n$}{n}}
\label{subsect:constructions_odd_n}

For $t \ge 4$, inserting the following points (and a point at the centre of $\pi$) yields \threeb permutations.

\paragraph{The case $n \equiv 29 \pmod {36}$.} Insert four points to $\sigma$, so that $m=3\ell+4$:

\addstackgap[7pt]{\begin{tabular}{p{1.5cm} p{5.5cm} p{5.5cm}}
\vspace{0.07in} & $(x_1,y_1) \eqdef (-5 + \varepsilon,\ 1 + \varepsilon)$ & $(x_2,y_2) \eqdef (-3 + \varepsilon,\ t - 2 + \varepsilon)$ \\
& $(x_3,y_3) \eqdef (-2 + \varepsilon,\ 7t + 4 + \varepsilon)$ & $(x_4,y_4) \eqdef (\quad\quad\ \varepsilon,\ 7t + 3 + \varepsilon)$ 
\end{tabular}}

\paragraph{The case $n \equiv 1 \pmod {36}$.} Insert two more points to $\sigma$, so that now $m=3\ell+6$:

\addstackgap[7pt]{\begin{tabular}{p{1.5cm} p{5.5cm} p{5.5cm}}
& $(x_5,y_5) \eqdef (-4 + \varepsilon,\ 3t - 1 + \varepsilon)$ & $(x_6,y_6) \eqdef (2t - 2 + \varepsilon,\ 5t + 1 + \varepsilon)$
\end{tabular}}

\paragraph{The case $n \equiv 9 \pmod {36}$.} Insert two last points to $\sigma$, in addition to the previous six. So, $m=3\ell+8$:

\addstackgap[7pt]{\begin{tabular}{p{1.5cm} p{5.5cm} p{5.5cm}}
& $(x_7,y_7) \eqdef (-1 + \varepsilon,\ 3t - 2 + \varepsilon)$ & $(x_8,y_8) \eqdef (4t + 1 + \varepsilon,\ t - 1 + \varepsilon)$
\end{tabular}}

\subsection{Small cases not covered by our construction}

For completeness, we provide a list of $3$-balanced permutations in \Cref{table:other_3bal}, for these values of $n$ that were not covered by the aforementioned constructions. This is because \Cref{thm:3balanced_construction}, \Cref{subsect:constructions_even_n} and \Cref{subsect:constructions_odd_n} yield \threeb permutations for every residue modulo $36$, starting only from some minimal value of $t$. 

\begin{table}[H]
    \centering
    \small
    \setlength{\tabcolsep}{10pt}
    \renewcommand{\arraystretch}{1.1}
    \begin{tabular}{ ||c p{14.3cm}|| } \hline
    \multicolumn{1}{||c}{\textbf{$n$}} & \multicolumn{1}{c||}{\textbf{$3$-Balanced Permutation in $\mathbb{S}_n$}}  \\
    \hline \hline

    $9$ & (3, 4, 9, 8, 5, 2, 1, 6, 7)  \\
    
    \hline
    
    $20$ & (8, 3, 19, 16, 4, 11, 9, 20, 14, 6, 15, 7, 1, 12, 10, 17, 5, 2, 18, 13)  \\
    
    \hline
    
    $28$ & (1, 7, 18, 22, 19, 26, 9, 23, 28, 14, 6, 10, 24, 12, 13, 5, 17, 3, 20, 16, 8, 2, 4, 25, 15, 27, 21, 11)  \\

    \hline

    $29$ & (9, 3, 6, 17, 26, 14, 29, 13, 11, 22, 19, 23, 28, 25, 20, 18, 8, 12, 4, 2, 5, 1, 7, 10, 15, 16, 27, 24, 21) \\
    
    \hline
    
    $36$ & (30, 12, 27, 4, 25, 15, 3, 10, 32, 5, 35, 33, 19, 17, 13, 24, 16, 1, 20, 9, 36, 21, 28, 8, 23, 14, 34, 18, 7, 26, 6, 22, 2, 29, 31, 11) \\

    \hline
    
    $37$ & (16, 15, 1, 35, 9, 13, 29, 30, 33, 32, 12, 3, 34, 21, 14, 18, 8, 19, 10, 20, 36, 22, 31, 17, 24, 28, 11, 5, 2, 6, 4, 7, 37, 26, 25, 27, 23) \\

    \hline
    
    $45$ & (42, 14, 15, 9, 36, 5, 44, 8, 26, 33, 17, 39, 21, 29, 11, 1, 25, 32, 20, 45, 35, 12, 27, 7, 3, 23, 22, 38, 24, 16, 31, 40, 19, 4, 10, 37, 41, 34, 2, 28, 6, 43, 30, 13, 18) \\
    
    \hline
    
    $56$ & (7, 8, 38, 50, 42, 25, 27, 45, 10, 22, 15, 46, 17, 56, 21, 36, 12, 24, 54, 2, 48, 39, 14, 16, 30, 51, 55, 41, 5, 26, 31, 35, 49, 4, 43, 34, 13, 3, 1, 37, 29, 28, 40, 32, 11, 23, 18, 19, 20, 44, 6, 52, 33, 53, 9, 47) \\
    
    \hline
    
    $64$ & (14, 8, 23, 48, 60, 38, 55, 50, 44, 26, 4, 62, 24, 53, 31, 35, 1, 47, 9, 28, 16, 33, 20, 40, 7, 63, 6, 43, 19, 29, 11, 13, 52, 54, 36, 46, 22, 59, 2, 58, 25, 45, 32, 49, 37, 56, 18, 64, 30, 34, 12, 41, 3, 61, 39, 21, 15, 10, 27, 5, 17, 42, 57, 51)  \\
    
    \hline
    
    $65$ & (59, 33, 5, 4, 55, 26, 44, 57, 37, 19, 7, 47, 20, 34, 9, 51, 27, 22, 50, 16, 12, 64, 36, 10, 56, 45, 40, 65, 39, 43, 54, 14, 28, 13, 15, 24, 30, 62, 32, 46, 63, 35, 41, 6, 53, 3, 49, 38, 11, 29, 1, 60, 2, 23, 18, 17, 8, 31, 52, 61, 48, 21, 42, 25, 58)  \\
    
    \hline
    
    $72$ & (11, 21, 16, 58, 45, 55, 3, 49, 9, 65, 31, 44, 69, 7, 60, 66, 2, 71, 42, 37, 35, 34, 24, 40, 29, 39, 54, 51, 8, 47, 25, 36, 56, 72, 38, 6, 13, 41, 48, 30, 4, 12, 46, 28, 62, 14, 63, 67, 10, 53, 32, 19, 43, 52, 22, 50, 1, 18, 5, 70, 64, 17, 57, 61, 23, 26, 15, 27, 68, 33, 59, 20) \\
    
    \hline
    
    $73$ & (42, 68, 6, 13, 4, 70, 36, 38, 2, 29, 47, 24, 64, 51, 32, 48, 41, 54, 19, 20, 71, 60, 7, 53, 26, 67, 69, 37, 33, 18, 27, 44, 3, 30, 55, 72, 52, 49, 45, 16, 25, 14, 50, 12, 31, 5, 35, 61, 65, 46, 21, 9, 56, 10, 66, 8, 59, 43, 39, 1, 15, 22, 34, 23, 62, 73, 57, 11, 17, 63, 58, 28, 40) \\

    \hline
    
    $81$ & (44, 31, 58, 10, 77, 49, 80, 79, 1, 52, 34, 15, 5, 25, 63, 42, 9, 47, 23, 38, 32, 73, 35, 4, 27, 48, 46, 69, 22, 68, 41, 2, 30, 59, 81, 66, 65, 40, 26, 53, 74, 29, 21, 28, 11, 54, 6, 19, 18, 55, 70, 75, 56, 43, 60, 13, 57, 14, 61, 37, 76, 64, 36, 3, 17, 8, 51, 67, 78, 72, 7, 33, 20, 39, 62, 12, 24, 16, 50, 71, 45) \\
    
    \hline

    $101$ & (84, 58, 37, 76, 54, 38, 51, 20, 85, 83, 36, 22, 97, 7, 23, 39, 90, 6, 67, 88, 66, 11, 70, 93, 96, 4, 16, 2, 53, 34, 24, 3, 31, 48, 91, 13, 94, 74, 64, 14, 44, 21, 49, 46, 79, 18, 61, 43, 95, 17, 78, 56, 52, 69, 75, 57, 72, 41, 81, 32, 101, 73, 5, 98, 15, 42, 62, 89, 86, 87, 25, 68, 30, 50, 82, 33, 8, 60, 71, 45, 55, 27, 47, 63, 29, 1, 35, 9, 10, 65, 80, 19, 59, 92, 40, 12, 77, 26, 99, 28, 100) \\
    
    \hline

    $109$ & (64, 39, 93, 24, 97, 65, 7, 54, 101, 52, 49, 62, 88, 35, 9, 81, 4, 8, 89, 13, 75, 87, 36, 41, 80, 71, 28, 3, 55, 73, 1, 78, 99, 104, 82, 14, 22, 57, 18, 63, 67, 29, 20, 109, 91, 31, 43, 100, 32, 106, 15, 95, 69, 16, 76, 79, 45, 53, 85, 103, 90, 92, 61, 27, 50, 23, 86, 66, 38, 26, 60, 34, 70, 25, 5, 105, 37, 46, 84, 83, 44, 6, 72, 10, 12, 51, 94, 68, 96, 74, 17, 59, 56, 47, 2, 30, 102, 11, 48, 40, 19, 21, 42, 98, 33, 77, 58, 108, 107) \\
    
    \hline

    $117$ & (100, 99, 90, 107, 48, 67, 45, 88, 61, 50, 4, 21, 79, 89, 75, 23, 53, 1, 2, 56, 106, 26, 102, 87, 36, 96, 72, 3, 14, 8, 24, 47, 42, 60, 44, 93, 38, 81, 13, 69, 52, 85, 15, 83, 111, 27, 86, 113, 40, 108, 6, 77, 101, 55, 64, 98, 9, 34, 59, 84, 109, 20, 54, 63, 17, 41, 112, 10, 78, 5, 32, 91, 7, 35, 103, 33, 66, 49, 105, 37, 80, 25, 74, 58, 76, 71, 94, 110, 104, 115, 46, 22, 82, 31, 16, 92, 12, 62, 116, 117, 65, 95, 43, 29, 39, 97, 114, 68, 57, 30, 73, 51, 70, 11, 28, 19, 18) \\
    
    \hline

    $137$ & (29, 66, 131, 65, 27, 44, 3, 119, 117, 61, 101, 47, 28, 79, 92, 18, 49, 122, 8, 31, 9, 48, 81, 103, 76, 104, 133, 125, 137, 40, 118, 42, 38, 26, 24, 87, 11, 105, 68, 108, 95, 106, 41, 132, 75, 15, 126, 116, 121, 74, 36, 83, 80, 60, 52, 71, 23, 53, 14, 84, 128, 25, 45, 50, 134, 136, 56, 99, 69, 39, 82, 2, 4, 88, 93, 113, 10, 54, 124, 85, 115, 67, 86, 78, 58, 55, 102, 64, 17, 22, 12, 123, 63, 6, 97, 32, 43, 30, 70, 33, 127, 51, 114, 112, 100, 96, 20, 98, 1, 13, 5, 34, 62, 35, 57, 90, 129, 107, 130, 16, 89, 120, 46, 59, 110, 91, 37, 77, 21, 19, 135, 94, 111, 73, 7, 72, 109) \\
    
    \hline

    $145$ & (97, 26, 116, 136, 81, 11, 55, 32, 118, 4, 140, 50, 92, 103, 124, 69, 62, 23, 45, 44, 83, 15, 128, 67, 109, 144, 99, 9, 48, 3, 105, 138, 66, 75, 40, 94, 25, 59, 46, 111, 31, 95, 14, 126, 127, 107, 27, 117, 1, 134, 42, 36, 72, 13, 139, 57, 90, 60, 108, 88, 76, 129, 21, 78, 5, 113, 122, 64, 130, 61, 34, 93, 73, 53, 112, 85, 16, 82, 24, 33, 141, 68, 125, 17, 70, 58, 38, 86, 56, 89, 7, 133, 74, 110, 104, 12, 145, 29, 119, 39, 19, 20, 132, 51, 115, 35, 100, 87, 121, 52, 106, 71, 80, 8, 41, 143, 98, 137, 47, 2, 37, 79, 18, 131, 63, 102, 101, 123, 84, 77, 22, 43, 54, 96, 6, 142, 28, 114, 91, 135, 65, 10, 30, 120, 49) \\
    
    \hline

    $153$ & (142, 69, 119, 121, 148, 5, 118, 57, 105, 80, 130, 1, 84, 83, 28, 47, 38, 50, 76, 44, 109, 61, 25, 11, 131, 115, 95, 139, 31, 55, 125, 89, 4, 73, 3, 7, 66, 137, 26, 96, 48, 102, 56, 134, 21, 87, 138, 113, 9, 136, 101, 42, 51, 75, 124, 111, 146, 40, 27, 86, 132, 90, 82, 62, 32, 117, 46, 60, 152, 13, 14, 63, 120, 10, 100, 135, 77, 19, 54, 144, 34, 91, 140, 141, 2, 94, 108, 37, 122, 92, 72, 64, 22, 68, 127, 114, 8, 43, 30, 79, 103, 112, 53, 18, 145, 41, 16, 67, 133, 20, 98, 52, 106, 58, 128, 17, 88, 147, 151, 81, 150, 65, 29, 99, 123, 15, 59, 39, 23, 143, 129, 93, 45, 110, 78, 104, 116, 107, 126, 71, 70, 153, 24, 74, 49, 97, 36, 149, 6, 33, 35, 85, 12) \\
    
    \hline
    
    \end{tabular}
    
    \caption{$3$-balanced permutations for all values of $n$ not covered by our constructions.}
    \label{table:other_3bal}
\end{table}

\section{Computation of \texorpdfstring{\Cref{lemma:relation_2_3_4_profile}}{Lemma 4.2}}
\label{appendix:relation_2_3_4_full_details}
In the proof of \Cref{lemma:relation_2_3_4_profile} we fix $\pi \in \Sn$ and consider the event:
\[
    \Pr_{i,j,k,l \sim [n]} \left[ i < j,\ \pi(i) < \pi(j),\ k < l,\ \pi(k) < \pi(l) \right]
\]

The proof of \Cref{lemma:relation_2_3_4_profile} proceeds by showing that, by conditioning on the possible equalities between the sampled indices and on their total order, the aforementioned event can be expressed as a polynomial involving pattern-counts in $\pi$, of lengths at most $4$. The complete details of this computation are presented in \Cref{table:details_2_3_4}. The first column of the table corresponds to the partition over the indices, where two indices are identical if and only if they reside in the same set. The second column corresponds to a fixed linear order over the indices at play, and the third column lists the contributing patterns, conditioned over the first two events. The total probability is then computed by summing over the probabilities of each row. Every row containing a partition of cardinality $k$, a linear order, and a set of patterns $S \subset \mathbb{S}_k$, indicates an event occurring with probability:
\[
    \frac{\prod_{i=1}^k (n-i+1) }{n^4} \cdot \frac{1}{k!} \cdot \sum_{\tau \in S} \frac{\pc{\tau}{\pi}}{\binom{n}{k}}
\]

\begin{table}[H]
\centering
\setlength{\tabcolsep}{12pt}
\renewcommand{\arraystretch}{1.1}
\begin{tabular}{ ||l l l|| } \hline
\multicolumn{1}{||c}{\textbf{Index Partition}} & \multicolumn{1}{c}{\textbf{Linear Order}} &  \multicolumn{1}{c||}{\textbf{Contributing Patterns}} \\
\hline \hline
\multirow{6}{7em}{$\{i\}, \{j\}, \{k\}, \{l\}$ } & $i < j < k < l$ & $\mathtt{1234}$, $\mathtt{1324}$, $\mathtt{3412}$, $\mathtt{2413}$, $\mathtt{1423}$, $\mathtt{2314}$ \\ 
& $i < k < j < l$ & $\mathtt{1324}$, $\mathtt{1234}$, $\mathtt{3142}$, $\mathtt{2143}$, $\mathtt{1243}$, $\mathtt{2134}$\\ 
& $i < k < l < j$ & $\mathtt{1342}$, $\mathtt{1243}$, $\mathtt{3124}$, $\mathtt{2134}$, $\mathtt{1234}$, $\mathtt{2143}$ \\ 
& $k < i < j < l$ & $\mathtt{3124}$, $\mathtt{2134}$, $\mathtt{1342}$, $\mathtt{1243}$, $\mathtt{2143}$, $\mathtt{1234}$ \\ 
& $k < i < l < j$ & $\mathtt{3142}$, $\mathtt{2143}$, $\mathtt{1324}$, $\mathtt{1234}$, $\mathtt{2134}$, $\mathtt{1243}$ \\ 
& $k < l < i < j$ & $\mathtt{3412}$, $\mathtt{2413}$, $\mathtt{1234}$, $\mathtt{1324}$, $\mathtt{2314}$, $\mathtt{1423}$ \\
\hline
\multirow{2}{7em}{$\{i,k\}, \{j\}, \{l\}$ } & $i < j < l$ & $\mathtt{123}$, $\mathtt{132}$ \\
& $i < l < j$ & $\mathtt{123}$, $\mathtt{132}$ \\
\hline
\multirow{2}{7em}{$\{j,l\}, \{i\}, \{k\}$ } & $i < k < j$ & $\mathtt{123}$, $\mathtt{213}$ \\
& $k < i < j$ & $\mathtt{123}$, $\mathtt{213}$ \\
\hline
\multirow{1}{7em}{$\{i,l\}, \{j\}, \{k\}$} & $k < i < j$ & $\mathtt{123}$ \\
\hline
\multirow{1}{7em}
{$\{j,k\}, \{i\}, \{l\}$} & $i < j < l$ & $\mathtt{123}$ \\
\hline
\multirow{1}{7em}
{$\{i,k\}, \{j,l\}$} & $i < j$ & $\mathtt{12}$ \\
\hline
\end{tabular}
\caption{The terms corresponding to the computation of the total probability of the event analysed in the proof of \Cref{lemma:relation_2_3_4_profile}. The partitions and orderings which do not contribute are omitted. }
\label{table:details_2_3_4}
\end{table}

\end{appendices}

\end{document}